 \renewcommand*{\backrefalt}[4]{%
     \ifcase #1 \footnotesize{(Not cited).}%
     \or        \footnotesize{(Cited on page~#2).}%
     \else      \footnotesize{(Cited on pages~#2).}%
     \fi}
\declaretheorem{theorem}
\declaretheorem{corollary}
\declaretheorem[sibling=corollary]{proposition}
\declaretheorem{lemma}
\declaretheorem[numberwithin=section]{definition, assumption, remark}
\crefname{assumption}{assumption}{assumptions}
\renewcommand{\bar}{\overline}
\title{\bf Tuning-Free Stochastic Optimization}
\author[1]{Ahmed Khaled}
\author[1]{Chi Jin}
\affil[1]{Princeton University, Princeton, NJ, USA}
\begin{document}
\maketitle

\begin{abstract}
Large-scale machine learning problems make the cost of hyperparameter tuning
ever more prohibitive. This creates a need for algorithms that can tune
themselves on-the-fly. We formalize the notion of \emph{``tuning-free''}
algorithms that can match the performance of optimally-tuned optimization
algorithms up to polylogarithmic factors given only loose hints on the relevant
problem parameters. We consider in particular algorithms that can match
optimally-tuned Stochastic Gradient Descent (SGD). When the domain of
optimization is bounded, we show tuning-free matching of SGD is possible and
achieved by several existing algorithms. We prove that for the task of
minimizing a convex and smooth or Lipschitz function over an unbounded domain,
tuning-free optimization is impossible. We discuss conditions under which
tuning-free optimization is possible even over unbounded domains. In particular,
we show that the recently proposed DoG and DoWG algorithms are tuning-free when
the noise distribution is sufficiently well-behaved. For the task of finding a
stationary point of a smooth and potentially nonconvex function, we give a
variant of SGD that matches the best-known high-probability convergence rate for
tuned SGD at only an additional polylogarithmic cost. However, we also give an
impossibility result that shows no algorithm can hope to match the optimal
expected convergence rate for tuned SGD with high probability.
\end{abstract}

\section{Introduction}

The hyperparameters we supply to an optimization algorithm can have a
significant effect on the runtime of the algorithm and the quality of the final
model~\citep{yang2021tuning,sivaprasad19_optim_bench_needs_to_accoun_hyper_tunin}.
Yet hyperparameter tuning is costly, and for large models might prove
intractable~\citep{black22_gpt_neox}. As a result, researchers often resort to
using a well-known optimizer like Adam~\citep{kingma14_adam} or
AdamW~\citep{loshchilov17_decoup_weigh_decay_regul} with widely used or default
hyperparameters. For example,
GPT-3~\citep{brown20_languag_model_are_few_shot_learn},
BLOOM~\citep{workshop22_bloom}, LLaMA~\citep{touvron23_llamaone}, and
LLaMA2 \citep{touvron23_llamatwo} all use either Adam or AdamW with identical
momentum parameters and similar training recipes.

This situation presents an immense opportunity for algorithms that
can tune hyperparameters on-the-fly. Yet such algorithms and their limits
are still poorly understood in the setting of stochastic optimization. Let us
make our setting more specific. We consider the minimization problem
\begin{align}
\label{eq:opt-problem}\tag{OPT}
\min_{x \in \mathcal{X}} f(x),
\end{align}
where $f: \mathcal{X} \rightarrow \mathbb{R}$ is differentiable and lower bounded by $f_{\ast}$. We assume that
we have access to (stochastic) gradients $g(x)$ that satisfy certain regularity
conditions that we shall make precise later.

Our main objects of study are \emph{tuning-free} algorithms. To make this notion
more precise, let $\mathcal{A}$ be an optimization algorithm that takes in $n$ problem
parameters $a = (a_1, a_2, \ldots, a_n)$ and after $T$ (stochastic) gradient accesses
returns a point $\overline{x}$ such that with high probability
\begin{align}
\label{eq:49}
f(\overline{x}) - f_{\ast} \leq \mathrm{Error}_{\mathcal{A}} (f, a, T).
\end{align}
The function $\mathrm{Error}_{\mathcal{A}}$ characterizes how well the algorithm $\mathcal{A}$
minimizes the function $f$ in $T$ steps given the supplied parameters. Let
$a^{\ast} = a^{\ast} (f, T)$ denote the set of parameters that minimizes the right hand side
of \cref{eq:49} for a specific function $f$ and number of steps $T$. In order
for an algorithm to find $a^{\ast} (f, T)$, it must start \emph{somewhere}. We
assume that we can easily find lower and upper bounds on the optimal parameters:
two sets $\underline{a}$ and $\overline{a}$ such that for $i = 1, 2, \ldots, n$ we
have
\begin{align*}
\underline{a}_i \leq a_i^{\ast} \leq \overline{a}_i.
\end{align*}
Such \emph{hints} on problem parameters can often be easily estimated in
practice, and are a much easier ask than the optimal parameters. To be a
\emph{tuning-free} version of $\mathcal{A}$, an algorithm $\mathcal{B}$ has to approximately match
the performance of $\mathcal{A}$ with optimally tuned parameters given those hints, a
definition we make rigorous next.

\begin{definition}\label{def:tuning-free-general}
(\textbf{Tuning-free algorithms}). We call an
algorithm $\mathcal{B}$ a \emph{tuning-free} version of $\mathcal{A}$ if given hints
$\underline{a}, \overline{a}$ on the optimal parameters $a^{\ast}$ for a function
$f$ it achieves the same error as $\mathcal{A}$ with the optimal parameters up to only
polylogarithmic degradation that depends on the hints and the number of
(stochastic) gradient accesses $T$. That is, if $\mathcal{A}$ achieves error
$f(\overline{x}) - f_{\ast} \leq \mathrm{Error}_{\mathcal{A}} (f, a^{\ast} (f, T), T)$, then $\mathcal{B}$
achieves the guarantee:
\begin{align}\tag{$\mathcal{B}$-error}
\label{eq:B-error} f(\overline{x}) - f_{\ast} &\leq \iota \cdot \mathrm{Error}_{\mathcal{A}} (f, a^{\ast}, T),
\end{align} where
$\iota = \mathrm{poly\log}\left(\frac{\overline{a}_1}{\underline{a}_1}, \ldots, \frac{\overline{a}_n}{\underline{a}_n}, T\right)$
is a polylogarithmic function of the hints.
\end{definition}

Clearly, asking a tuning-free algorithm $\mathcal{B}$ to achieve exactly the same error as
$\mathcal{A}$ is too much: we ought to pay some price for not knowing $a^{\ast}$ upfront. On
the other hand, if we allow polynomial dependencies on the hints, then our hints
have to be very precise to avoid large errors. This beats the point of being
tuning-free in the first place.

The algorithm $\mathcal{A}$ that we are primarily concerned with is Stochastic Gradient
Descent (SGD). SGD and its variants dominate in practice, owing to their
scalability and low memory
requirements~\citep{bottou16_optim_method_large_scale_machin_learn}. We consider
three classes of functions: (a) $L$-smooth and convex functions, (b)
$G$-Lipschitz and convex functions, and (c) $L$-smooth and potentially nonconvex
functions. We ask for tuning-free algorithms for each of these function classes.
We give precise definitions of these classes and our oracle model later in
\Cref{sec:preliminaries}.

In the setting of deterministic optimization, we have a very good understanding
of tuning-free optimization: there are many methods that, given only hints on
the problem parameters required by SGD, achieve the same rate as SGD up to only
polylogarithmic degradation. We review this case briefly in
\Cref{sec:tuning-free-optim-unbounded}. Despite
immense algorithmic developments in stochastic optimization~\citep{ duchi11_adagrad, levy17_onlin_to_offlin_conver_univer, levy18_onlin_adapt_method_univer_accel, li18_conver_stoch_gradien_descen_with_adapt_steps, kavis19_unixg, carmon22_makin_sgd_param_free, ivgi23_dog_is_sgds_best_frien, cutkosky23_mechan}
and the related setting of online
learning~\citep{orabona16_coin_bettin_param_free_onlin_learn,cutkosky17_onlin_convex_optim_with_uncon_domain_losses,cutkosky19_artif_const_lipsc_hints_uncon_onlin_learn,mhammedi19_lipsc_adapt_with_multip_learn,mhammedi20_lipsc_compar_norm_adapt_onlin_learn,orabona2020parameter} we are not
aware of \emph{any} algorithms that fit our definition as tuning-free counterparts of
SGD for \emph{any} of the function classes we
consider. The main question of our work is thus:

\begin{quote}
Can we find tuning-free counterparts for SGD in the setting of
stochastic optimization and the classes of functions we consider (convex and
smooth functions, convex and Lipschitz functions, and nonconvex and smooth functions)?
\end{quote}

\textbf{Our contributions.} We answer the above question in the negative for the
first two function classes and make some progress towards answering it for the
third. In particular, our main contributions are:

\begin{itemize}
\item For \textbf{convex optimization}: if the domain of
optimization is bounded, we highlight results from the literature showing
tuning-free optimization matching SGD is possible. If the domain of optimization
$\mathcal{X}$ is unbounded, we give an \textbf{impossibility result} that shows no
algorithm can be a tuning-free counterpart of SGD for smooth and convex
functions (\Cref{thm:impossibility-result-smooth}), as well as for Lipschitz and
convex functions (\Cref{thm:impossibility-result-non-smooth}). Additionally if
the stochastic gradient noise has a certain large \emph{signal-to-noise} ratio
(defined in \Cref{sec:what-noise-distr}), then tuning-free optimization is
possible even when the domain of optimization $\mathcal{X}$ is unbounded and can be
achieved by the recently-proposed DoG~\citep{ivgi23_dog_is_sgds_best_frien} and
DoWG~\citep{khaled23_dowg_unleas} algorithms for both smooth and/or Lipschitz
functions (\Cref{thm:dog-dowg-special-noise}).
\item For \textbf{nonconvex optimization}: We consider two different notions of
tuning-free optimization that correspond to the best-known convergence error
bounds for SGD in expectation~\citep{ghadimi2013stochastic} and with high
probability~\citep{liu23_high_probab_conver_stoch_gradien_method}. We show
tuning-free optimization is \textbf{impossible} in the former setting
(\Cref{thm:impossibility-result-nonconvex}). On the other hand, for the latter,
slightly weaker notion, we give a \textbf{positive result} and give a
tuning-free variant of SGD~(\Cref{thm:restarted-sgd}).
\end{itemize}

\subsection{Preliminaries}
\label{sec:preliminaries}
In this section we review some preliminary notions and definitions that we shall
make use of throughout the paper. We say that a function $f: \mathcal{X} \rightarrow \mathbb{R}$ is convex if
for any $x, y \in \mathcal{X}$ we have
\begin{align*} f(tx + (1-t) y) \leq t f(x) + (1 - t) f(y) \text { for all } t \in [0, 1].
\end{align*} We call a function $G$-Lipschitz if
$\abs{f(x) - f(y)} \leq G \norm{x-y}$ for all $x, y \in \mathcal{X}$. All norms considered in
this paper are Euclidean. We let $\log_+ x \eqdef \log x + 1$. A differentiable
function $f$ is $L$-smooth if for any $x, y \in \mathcal{X}$ we have
$\norm{\nabla f(x)-\nabla f(y)} \leq L \norm{x-y}$.

\textbf{Oracle model.} All algorithms we consider shall access gradients through
one of the two oracles defined below.

\begin{definition}\label{def:det-oracle}
We say that $\mathcal{O}(f)$ is a \textbf{deterministic first-order oracle} for the
function $f$ if given a point $x \in \mathcal{X}$ the oracle returns the pair
$\left\{ f(x), \nabla f(x) \right\}$.
\end{definition}

If we only allow the algorithm access to stochastic gradients, then we call this
a stochastic oracle. Our main lower bounds are developed under the following
noise model.

\begin{assumption}\label{asm:almost-sure-bound}
The stochastic gradient estimates are bounded almost surely. That is, there exists some
$R \geq 0$ such that for all $x \in \mathcal{X}$
\begin{align*}
\norm{\hat{g}(x) - \nabla f(x)} \leq R.
\end{align*}
\end{assumption}

The stochastic first-oracle that

\begin{definition}\label{def:stochastic-oracle} We say that $\mathcal{O} (f, R_f)$ is a
\textbf{stochastic first-order oracle} for the function $f$ \textbf{with bound $\sigma_f$} if,
given a point $x \in \mathcal{X}$, it returns a pair of random variables
$\left[ \hat{f} (x), \hat{g} (x) \right]$ such that (a) the estimates are
unbiased $\mathbb{E}[\hat{f}(x)] = f(x)$, $\mathbf{E}[\hat{g}(x)] = \nabla f(x)$, and
(b) the stochastic gradients satisfy \Cref{asm:almost-sure-bound} with $R=\sigma_f$.
\end{definition}

The above oracle restricts the noise to be bounded almost surely. We shall
develop our lower bounds under that oracle. However, for some of the upper
bounds we develop, we shall relax the requirement on the noise from boundedness
to being sub-gaussian (see \Cref{sec:what-noise-distr}), and we shall make this
clear then.

\section{Related Work}

This section reviews existing approaches in the literature aimed at reducing or
eliminating hyperparameter tuning.

\textbf{Parameter-free optimization.} An algorithm $\mathcal{A}$ is called
\emph{parameter-free} if it achieves the convergence rate
$\tilde{\mathcal{O}} \left( \frac{G \norm{x_0 - x_{\ast}}}{\sqrt{T}} \right)$ given $T$
stochastic gradient accesses for any convex function $f$ with stochastic
subgradients bounded in norm by $G$, with possible knowledge of $G$
\citep[Remark 1]{orabona23_normal_gradien_all}. There exists a vast literature
on such methods, particularly in the setting of online learning, see
e.g.~\citep{orabona2020parameter}. Parameter-free optimization differs from
tuning-free optimization in two ways: (a) the $\tilde{\mathcal{O}} (\cdot)$ can suppress
higher-order terms that are not permitted according to the tuning-free
definition, and (b) gives the algorithm possible knowledge of a parameter like
$G$ whereas tuning-free algorithms can only get to see upper and lower bounds on
$G$. Nevertheless, many parameter-free methods do not need any knowledge of
$G$~\citep{cutkosky19_artif_const_lipsc_hints_uncon_onlin_learn,mhammedi19_lipsc_adapt_with_multip_learn,mhammedi20_lipsc_compar_norm_adapt_onlin_learn}.
However,
\citet{cutkosky17_onlin_learn_without_prior_infor,cutkosky17_onlin_convex_optim_with_uncon_domain_losses}
give lower bounds showing that any online learning algorithm insisting on a
linear dependence on $\norm{x_0 - x_{\ast}}$ (as in optimally tuned SGD) must
suffer regret from potentially exponential regret. If we do not insist on a
linear dependence on $\norm{x_0 - x_{\ast}}$, then the best achievable convergence
bound scales $\norm{x_0 - x_{\ast}}^3$, and this is
tight~\citep{mhammedi20_lipsc_compar_norm_adapt_onlin_learn}. None of the
aforementioned lower bounds apply to the setting of stochastic optimization,
since in general online learning assumes an adversarial oracle, which is
stronger than a stochastic oracle.

\textbf{Tuning-free algorithms in the deterministic setting.} Gradient descent augmented with line
search \citep{nesterov14_universal_grad_methods,beck17_first_order_methods_opt}
is tuning-free for smooth convex and nonconvex optimization. Bisection
search \citep{carmon22_makin_sgd_param_free} is tuning-free for both convex and
smooth as well as convex and Lipschitz optimization, as is a restarted version of gradient
descent with Polyak stepsizes~\citep{hazan19_revis_polyak_step_size}. In the
smooth setting, the adaptive descent method
of~\citep{malitsky19_adapt_gradien_descen_without_descen} is also tuning-free.
There are also accelerated methods~\citep{lan23_optim_param_free_gradien_minim},
methods for the Lipschitz
setting~\citep{defazio23_learn_rate_free_learn_by_d_adapt}, methods based on
online learning~\citep{orabona23_normal_gradien_all}, and others.

\textbf{Algorithms for the stochastic setting.} Observe that because online learning is a more
general setting than the stochastic one, we can apply algorithms from online
convex optimization here, like
e.g. \citep{mhammedi20_lipsc_compar_norm_adapt_onlin_learn} coupled with an
appropriate online-to-batch conversion~\citep{hazan22_ocobook}. In more recent
work \citep{carmon22_makin_sgd_param_free,ivgi23_dog_is_sgds_best_frien}, we see
algorithmic developments specific to the stochastic setting. We discuss the
convergence rates these algorithms achieve in more detail in
\Cref{sec:imposs-results-stoch}.

\textbf{Other hyperparameter tuning approaches.} In practice, hyperparameters
are often found by grid search, random search, or methods based on Bayesian
optimization~\citep{Bischl2023}; None of these approaches come with efficient
theoretical guarantees. Another approach is ``meta-optimization'' where we have
a sequence of optimization problems and seek to minimize the cumulative error
over this sequence. Often, another optimization algorithm is then used to select
the learning rates, e.g. hypergradient
descent~\citep{baydin17_onlin_learn_rate_adapt_with_hyper_descen}.
Meta-optimization approaches are quite difficult to establish theoretical
guarantees for, and only recently have some theoretical results been
shown~\citep{chen23_nonst_contr_approac_to_optim}. Our setting in this paper is
different, since rather than seek to minimize regret over a sequence of
optimization problems, we have a single function and an oracle that gives us
(stochastic) gradient estimates for this function.

\textbf{Concurrent work.} In concurrent work, \citet{carmon24_price_adapt_stoch_convex_optim} and \citet{attia24_how_free_is_param_free_stoch_optim} also study lower bounds for first-order stochastic optimization. In both papers, like in our work, the algorithm is provided with a certain range that the problem parameters fall in (what we term as \emph{hints}) and must make use of only that to minimize the function with stochastic gradient evaluations. \citet{carmon24_price_adapt_stoch_convex_optim} study what is the minimum possible multiplicative factor slowdown any algorithm must suffer compared to optimally-tuned baselines when provided access only to hints, which they term the \emph{price of adaptivity}. They provide lower bounds for stochastic convex optimization for Lipschitz functions in expectation and with high probability, and also consider the case where some of the problem parameters have no uncertainty (e.g. when we know the Lipschitz constant but not the initial distance to the optimum). Our lower bound in this setting~(\Cref{thm:impossibility-result-non-smooth}) rules out any polylogarithmic price of adaptivity as impossible. Additionally, we also give lower bounds for nonconvex and smooth convex optimization~(\Cref{thm:impossibility-result-smooth,thm:impossibility-result-nonconvex}).

\citet{attia24_how_free_is_param_free_stoch_optim} study stochastic optimization in a similar setting to ours, and give a new upper bound for restarted non-convex SGD that achieves a similar convergence guarantee to \Cref{thm:restarted-sgd}. We give our upper bound under a slightly more general noise distribution (that the noise has subgaussian norm) at the cost of a polylogarithmic dependence on the problem dimension. We also additionally give a lower bound that rules out the stronger in-expectation convergence guarantee for nonconvex SGD. In the convex setting, \citet{attia24_how_free_is_param_free_stoch_optim} give lower bounds for smooth and nonsmooth stochastic optimization that show a polynomial dependence on the hints is the best we can hope to achieve, and give a matching upper bound based on restarted SGD with AdaGrad-like stepsizes. In contrast, for our upper bounds in this case we study more specifically which noise distributions are amenable to optimization and prove results for the DoG and DoWG algorithms (with no restarting procedures). Additionally, we also investigate whether tuning-free optimization is possible under a bounded domain and provide guarantees for DoG/DoWG there (\Cref{thm:dog-dowg-tuning-free-bounded}).

\section{Tuning-Free Optimization Under a Bounded Domain}

We begin our investigation by studying the \textbf{bounded setting}, where we
make the following assumption on the minimization problem~\eqref{eq:opt-problem}:

\begin{assumption}\label{asm:bounded-domain}
The optimization domain $\mathcal{X}$ is bounded. There exists some constant $D > 0$ such
that $\norm{x-y} \leq D$ for all $x, y \in \mathcal{X}$.
\end{assumption}

We seek a tuning-free version of SGD. Recall that SGD achieves with probability at least
$1-\delta$ the following convergence guarantee~\citep{jain19_makin_last_iterat_sgd_infor_theor_optim,liu23_high_probab_conver_stoch_gradien_method}

\begin{align}\label{eq:53}
f(x_{\mathrm{out}}) - f_{\ast} \leq \upsilon \cdot \begin{cases}
                              \frac{D L^2}{T} + \frac{\sigma D}{\sqrt{T}}  & \text { if } f \text { is $L$-smooth}, \\
                              \frac{\sqrt{G^2 + \sigma^2} D}{\sqrt{T}}   & \text { if }  f \text { is $G$-Lipschitz},
                              \end{cases}
\end{align}
where $\nu = \mathrm{poly\log} \frac{1}{\delta}$ and $\sigma$ is an upper bound on the
stochastic gradient noise (per \Cref{asm:almost-sure-bound}). To achieve the
convergence guarantee given by~\cref{eq:53}, we need to know the parameters
$D, \sigma$, and $L$ in the smooth case or $G$ in the nonsmooth case. Per
\Cref{def:tuning-free-general}, a tuning-free version of SGD will thus be given
the hints $D \in [\underline{D}, \overline{D}]$,
$\sigma \in [\underline{\sigma}, \overline{\sigma}]$, and either
$L \in [\underline{L}, \overline{L}]$ in the smooth setting or
$G \in [\underline{G}, \overline{G}]$ in the nonsmooth setting. Given those hints,
we then ask the algorithm to achieve the same rate as \cref{eq:53} up to a
multiplicative polylogarithmic function of the hints.

It turns out that tuning-free optimization under a bounded domain is solvable in
many ways. Many methods from the online learning literature, e.g.~\citep{cutkosky17_bounded_no_params,mhammedi19_lipsc_adapt_with_multip_learn,cutkosky19_artif_const_lipsc_hints_uncon_onlin_learn}
can solve this problem when combined with standard online-to-batch conversion bounds. We
give the details for this construction for one such algorithm in the next proposition:

\begin{restatable}{proposition}{cutkoskybounded}\label{prop:cutkosky-alg}
Coin betting through Online Newton Steps with Hints \citep[Algorithm
1]{cutkosky19_artif_const_lipsc_hints_uncon_onlin_learn} is tuning-free in the
bounded setting.
\end{restatable}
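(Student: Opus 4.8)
\emph{Proof proposal.}
The plan is to derive the guarantee from the regret bound satisfied by Algorithm~1 of \citet{cutkosky19_artif_const_lipsc_hints_uncon_onlin_learn} together with a standard high-probability online-to-batch conversion. Recall that, run on the convex domain $\mathcal{X}$ and given at each round $t$ a \emph{hint} $h_t$ with $\norm{g_t}\le h_t$ and a scale parameter $\epsilon>0$, the algorithm produces iterates $x_1,\dots,x_T\in\mathcal{X}$ and guarantees for every comparator $u\in\mathcal{X}$ a regret bound of the form
\begin{align*}
\sum_{t=1}^T \langle g_t, x_t - u\rangle \;\le\; \epsilon \;+\; \iota_0 \cdot \norm{u - x_1}\sqrt{\sum_{t=1}^T \norm{g_t}^2},
\end{align*}
where $\iota_0$ is polylogarithmic in $\norm{u-x_1}$, $T$, $1/\epsilon$ and $\max_t h_t$. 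The two features I would exploit are that the dependence on the comparator distance $\norm{u-x_1}$ is linear up to the factor $\iota_0$ (comparator adaptivity), and that the leading term involves the \emph{realized} gradient norms $\sqrt{\sum_t\norm{g_t}^2}$ rather than the hints, so overestimating $h_t$ is only paid for inside $\iota_0$.

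I would instantiate the algorithm by feeding it the stochastic gradients $g_t = g(x_t)$, taking the constant hint $h_t \equiv \overline{G}+\overline{\sigma}$ in the Lipschitz case and $h_t \equiv \overline{L}\,\overline{D}+\overline{\sigma}$ in the smooth case (both valid, since $\norm{g_t}\le\norm{\nabla f(x_t)}+\sigma$ and $\norm{\nabla f(x_t)}$ is at most $G$, resp.\ at most $LD$ by $L$-smoothness and $\mathrm{diam}(\mathcal{X})\le D$), setting $\epsilon = \underline{D}\,\underline{\sigma}$, and outputting $\overline{x} = \frac1T\sum_{t=1}^T x_t\in\mathcal{X}$. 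By convexity of $f$ and Jensen's inequality,
\begin{align*}
f(\overline{x}) - f_\ast \;\le\; \frac1T\sum_{t=1}^T \langle \nabla f(x_t), x_t - x_\ast\rangle \;=\; \frac1T\sum_{t=1}^T \langle g_t, x_t - x_\ast\rangle \;+\; \frac1T\sum_{t=1}^T \langle \nabla f(x_t)-g_t,\, x_t - x_\ast\rangle .
\end{align*}
The first sum is the regret against $u=x_\ast$, bounded above with $\norm{x_\ast - x_1}\le D$; the second is a sum of martingale differences (conditionally mean-zero because $\mathbb{E}[g(x_t)\mid x_t]=\nabla f(x_t)$) with increments bounded by $\sigma D$ in magnitude, by \Cref{asm:almost-sure-bound} and \Cref{asm:bounded-domain}, hence at most $O(\sigma D\sqrt{T\log(1/\delta)})$ with probability at least $1-\delta$ by Azuma--Hoeffding.

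Combining the two pieces gives, with probability at least $1-\delta$,
\begin{align*}
f(\overline{x}) - f_\ast \;\le\; \frac{\epsilon}{T} \;+\; \frac{\iota_0\,D\sqrt{\sum_{t=1}^T\norm{g_t}^2}}{T} \;+\; O\!\left(\frac{\sigma D\sqrt{\log(1/\delta)}}{\sqrt T}\right).
\end{align*}
In the Lipschitz case $\norm{g_t}^2 \le 2G^2+2\sigma^2$, so $\sqrt{\sum_t\norm{g_t}^2}\le\sqrt{2(G^2+\sigma^2)T}$ and the bound is $\iota\cdot\frac{\sqrt{G^2+\sigma^2}\,D}{\sqrt T}$, matching the Lipschitz case of \cref{eq:53}. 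In the smooth case I would instead use the self-bounding inequality $\norm{\nabla f(x_t)}^2\le 2L(f(x_t)-f_\ast)$ with $\norm{g_t}^2\le 2\norm{\nabla f(x_t)}^2+2\sigma^2$ and convexity to get $\sum_t\norm{g_t}^2 \lesssim L\sum_t\langle\nabla f(x_t), x_t-x_\ast\rangle + \sigma^2 T$; substituting this back yields a quadratic inequality in $\sum_t\langle\nabla f(x_t), x_t-x_\ast\rangle$ whose solution gives a bound of order $\iota\cdot\big(\frac{LD^2}{T}+\frac{\sigma D}{\sqrt T}\big)$, again matching (the smooth case of) \cref{eq:53}. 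In both cases $\iota$ collects only $\epsilon/T$, the factor $\iota_0$, and the Azuma term, so it is polylogarithmic in $\overline{D}/\underline{D}$, $\overline{\sigma}/\underline{\sigma}$, $\overline{G}/\underline{G}$ (resp.\ $\overline{L}/\underline{L}$), $T$ and $1/\delta$; this is exactly the tuning-free guarantee of \Cref{def:tuning-free-general}.

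The part requiring the most care --- and the reason \citet{cutkosky19_artif_const_lipsc_hints_uncon_onlin_learn}'s algorithm is the right instrument, rather than, say, projected AdaGrad on a ball of radius $\overline{D}$ --- is verifying that overestimating the gradient scale and the domain size degrades the guarantee only polylogarithmically: plugging an upper bound on the Lipschitz constant into a scale-aware method would incur a factor $(\overline{G}+\overline{\sigma})/(G+\sigma)$, which is only polynomially --- not polylogarithmically --- controlled by the hint ratios and would violate \Cref{def:tuning-free-general}. A secondary technical point is that in the smooth case the gradient-norm bound is not itself among the supplied hints and must be produced from $L$, $D$, $\sigma$; if the minimizer of $f$ over $\mathcal{X}$ lies on the boundary one picks up an extra $\norm{\nabla f(x_\ast)}$-type term that has to be estimated (e.g.\ from a stochastic gradient at $x_1$) or absorbed into the hint.
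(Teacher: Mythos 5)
Your proposal is correct and follows essentially the same route as the paper's proof: invoke the regret bound of \citet[Algorithm 1]{cutkosky19_artif_const_lipsc_hints_uncon_onlin_learn}, choose the free parameters as functions of the hint lower bounds so that overestimation enters only through logarithms, and then apply online-to-batch conversion, using the self-bounding inequality $\sqn{\nabla f(x_t)}\le 2L(f(x_t)-f_\ast)$ in the smooth case exactly as in the universal conversion of \citet{levy17_onlin_to_offlin_conver_univer} that the paper cites. The only differences are cosmetic (your $\epsilon=\underline{D}\,\underline{\sigma}$ versus the paper's $\epsilon=\underline{D}\,\underline{G}$ and $\alpha=\underline{G}^2$), and you spell out the martingale/Azuma and quadratic-inequality steps that the paper delegates to a citation.
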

The proof of this result is provided in the appendix, and essentially just
combines \citep[Theorem 2]{cutkosky19_artif_const_lipsc_hints_uncon_onlin_learn}
with online-to-batch conversion.

In this paper, we shall focus particularly on methods that fit the stochastic
gradient descent paradigm, i.e. that use updates of the form
$x_{k+1} = x_k - \eta_k g_k$, where $g_k$ is the stochastic gradient at
step $k$. Two methods that fit this paradigm are DoG~\citep{ivgi23_dog_is_sgds_best_frien}
and DoWG~\citep{khaled23_dowg_unleas}. DoG uses stepsizes of the form
\begin{align}
\label{eq:dog-update} \eta_t = \frac{\overline{r}_t}{\sqrt{u_t}}, && \overline{r}_t = \max_{k \leq t} (\norm{x_t - x_0}, r_{\epsilon}), && u_t = \sum_{k=0}^t \sqn{g_k},
\end{align}
where $r_{\epsilon}$ is a parameter that we will always set to $\underline{D}$. Similarly, DoWG
uses stepsizes of the form
\begin{align}
\label{eq:dowg-update} \eta_t = \frac{\overline{r}_t^2}{\sqrt{v_t}}, && \overline{r}_t = \max_{k \leq t} (\norm{x_t - x_0}, r_{\epsilon}), && v_t = \sum_{k=0}^t \overline{r}_k^2 \sqn{g_k}.
\end{align}

The next theorem shows that in the bounded setting, both DoG and DoWG are
tuning-free.

\begin{restatable}{theorem}{dogdowgbounded}\label{thm:dog-dowg-tuning-free-bounded}
DoG and DoWG are tuning-free in the bounded setting. That is, there exists some
$\iota = \mathrm{poly\log} (\frac{\overline{D}}{\underline{D}}, \frac{\overline{\sigma}}{\underline{\sigma}}, T, \delta^{-1})$
such that
\begin{align*}
f(x_{\mathrm{out}}) - f_{\ast} \leq \iota \cdot \begin{cases}
                              \frac{D L^2}{T} + \frac{\sigma D}{\sqrt{T}}  & \text { if } f \text { is $L$-smooth}, \\
                              \frac{\sqrt{G^2 + \sigma^2} D}{\sqrt{T}}   & \text { if }  f \text { is $G$-Lipschitz}.
                              \end{cases}
\end{align*}
This rate is achieved simultaneously for both classes of functions without prior
knowledge of whether $f$ is smooth or Lipschitz (and thus no usage of the hints
$\underline{L}, \overline{L}, \underline{G}, \overline{G}$).
\end{restatable}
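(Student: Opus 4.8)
I would reduce the statement to the known high-probability convergence guarantees for DoG \citep{ivgi23_dog_is_sgds_best_frien} and DoWG \citep{khaled23_dowg_unleas} together with two elementary consequences of \Cref{asm:bounded-domain}. Run DoG (resp.\ DoWG) with $r_\epsilon = \underline{D}$ as in \eqref{eq:dog-update}, projecting onto $\mathcal{X}$ after each step if $\mathcal{X}\neq\mathbb{R}^d$. The analyses in those papers produce an output $x_{\mathrm{out}}$ (an $\overline{r}_k$- resp.\ $\overline{r}_k^2$-weighted average of the iterates) for which, with probability at least $1-\delta$,
\[
f(x_{\mathrm{out}}) - f_\ast \;\le\; \iota_0\cdot\frac{\overline{r}_T\,\sqrt{u_T}}{T},\qquad u_T \eqdef \sum_{k=0}^T\sqn{g_k},\qquad \iota_0 = \mathrm{poly\log}\Big(\tfrac{\overline{r}_T}{r_\epsilon},\,T,\,\delta^{-1}\Big),
\]
with $\overline{r}_T = \max_{k\le T}(\norm{x_k-x_0},r_\epsilon)$; and in the $L$-smooth case the same analyses give the sharper bound $f(x_{\mathrm{out}}) - f_\ast \le \iota_0\big(L\overline{r}_T^2/T + \sigma\overline{r}_T/\sqrt{T}\big)$, obtained by feeding the self-bounding inequality $\sqn{\nabla f(x)}\le 2L(f(x)-f_\ast)$ into the respective weighted-regret bound. (For DoWG these follow from its analysis using $\sqrt{v_T}\le\overline{r}_T\sqrt{u_T}$ and the doubling estimate on $\sum_k\overline{r}_k^2$.) The almost-sure noise bound of \Cref{def:stochastic-oracle} is precisely what the martingale-concentration step in these proofs requires, and neither algorithm reads $\sigma$; so the final $\iota$ will not depend on $\overline{\sigma}/\underline{\sigma}$, which is slightly stronger than \Cref{def:tuning-free-general} asks.

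\textbf{Bounding $\overline{r}_T$ and the Lipschitz case.} Under \Cref{asm:bounded-domain} every iterate satisfies $\norm{x_k-x_0}\le D$, so $\overline{r}_T = \max_{k\le T}(\norm{x_k-x_0},r_\epsilon) \le \max(D,\underline{D}) = D$ while $\overline{r}_T\ge r_\epsilon = \underline{D}$; hence $\overline{r}_T/r_\epsilon\le \overline{D}/\underline{D}$ and $\iota_0$ is polylogarithmic in $\overline{D}/\underline{D},T,\delta^{-1}$. If $f$ is $G$-Lipschitz then $\norm{\nabla f(x)}\le G$, and \Cref{asm:almost-sure-bound} with $R=\sigma$ gives $\norm{g_k}\le G+\sigma$ almost surely, so $u_T\le (T+1)(G+\sigma)^2\le 4(T+1)(G^2+\sigma^2)$. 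Plugging this and $\overline{r}_T\le D$ into the displayed bound yields, with probability $\ge 1-\delta$,
\[
f(x_{\mathrm{out}}) - f_\ast \;\le\; \iota_0\cdot\frac{D\cdot 2\sqrt{T+1}\,\sqrt{G^2+\sigma^2}}{T}\;\le\; 4\,\iota_0\cdot\frac{\sqrt{G^2+\sigma^2}\,D}{\sqrt{T}},
\]
which is the Lipschitz line of the claim with $\iota = 4\iota_0$.

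\textbf{The smooth case and simultaneity.} Here the generic bound is too weak (it only gives a $1/\sqrt{T}$ rate), so I would use the sharper smooth-convex bound recorded above, $f(x_{\mathrm{out}}) - f_\ast \le \iota_0\big(L\overline{r}_T^2/T + \sigma\overline{r}_T/\sqrt{T}\big)$: substituting $\overline{r}_T\le D$ turns this into the smooth case of \cref{eq:53}. Finally, DoG and DoWG run identically whether $f$ is smooth or Lipschitz and use none of $L,G,\underline{L},\overline{L},\underline{G},\overline{G}$; hence, for one and the same run, whichever of the two regularity hypotheses $f$ happens to satisfy, the corresponding bound holds. Taking $\iota$ to be a constant times the larger of the two polylog factors completes the proof.

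\textbf{Main obstacle.} The delicate point is the smooth case: one must extract the fast $LD^2/T$ term without paying a \emph{polynomial} factor in $\overline{D}/\underline{D}$. A naive closure of the self-bounding inequality controls only the uniformly-weighted sum $\sum_k(f(x_k)-f_\ast)$, and bounding that by the weighted-regret estimate forces one to lower-bound the non-monotone stepsizes by $\eta_k\ge r_\epsilon/\sqrt{u_T}$, losing a factor as large as $\overline{D}/\underline{D}$. The DoG/DoWG analyses avoid this precisely by outputting an $\overline{r}_k$- (resp.\ $\overline{r}_k^2$-) weighted average, so that the weighted-regret bound becomes a convergence rate with only polylogarithmic overhead and the self-bounding property can be closed inside that same weighted sum; checking that this goes through on a bounded domain with $r_\epsilon=\underline{D}$ is the one place the argument is not purely mechanical.
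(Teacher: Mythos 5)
Your plan follows the paper's own route: run DoG/DoWG with $r_\epsilon=\underline{D}$, invoke their high-probability weighted-regret bounds (with the martingale concentration step adapted to the bounded-noise oracle), use \Cref{asm:bounded-domain} to sandwich $\underline{D}\le \overline{r}_T\le D$ so that all $\log(\overline{r}_T/r_\epsilon)$ factors become $\mathrm{poly\log}(\overline{D}/\underline{D})$, bound $u_T$ by $O(T(G^2+\sigma^2))$ in the Lipschitz case, and close the smooth case by feeding $\sqn{\nabla f(x)}\le 2L(f(x)-f_\ast)$ back into the weighted bound and solving the resulting quadratic inequality. You also correctly identify the one genuinely delicate point (extracting $LD^2/T$ without a polynomial $\overline{D}/\underline{D}$ factor) and the mechanism that resolves it; for DoG the paper uses a doubling-epoch decomposition of the uniform average (each epoch costs a constant factor and there are only $\log(2\overline{r}_T/r_0)$ epochs) rather than the $\overline{r}_k$-weighted average, but this is an equivalent device.

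One concrete step is missing: the small-horizon regime. The conversion from the weighted sum to a rate goes through \Cref{lem:log-term-place}, whose guarantee is $\max_t \sum_{i<t} s_i/s_t \ge \frac{1}{e}(T/\log_+(s_T/s_0)-1)$; when $T\lesssim \log_+(\overline{D}/\underline{D})$ this is nonpositive and your displayed bounds $\iota_0\,\overline{r}_T\sqrt{u_T}/T$ become vacuous, so the argument as written does not cover all $T$. The paper handles this by returning $x_0$ whenever $T<4\log_+(\overline{D}/\underline{D})$ and checking directly (via convexity/Lipschitzness or smoothness on the bounded domain) that $f(x_0)-f_\ast$ already meets the claimed bound with a $\log_+(\overline{D}/\underline{D})$ factor absorbed into $\iota$; you need this fallback (or an equivalent one) for the theorem to hold for every $T$. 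Beyond that, be aware that the ``reduce to the known guarantees'' step is not a pure citation: the published DoG/DoWG analyses assume bounded (or locally bounded) stochastic gradients rather than bounded noise, so the concentration lemma must actually be re-run with $X_k=\langle g_k-\nabla f(x_k),(x_k-x_\ast)/\overline{d}_k\rangle$ bounded by $\sigma$ — this is exactly the lengthy-but-routine modification the paper carries out explicitly.
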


This theorem essentially comes for free by modifying the results in
\citep{ivgi23_dog_is_sgds_best_frien,khaled23_dowg_unleas}, and while the proof
modifications are quite lengthy we claim no significant novelty here. We note
further that unlike \citep[Algorithm
1]{cutkosky19_artif_const_lipsc_hints_uncon_onlin_learn}, both DoG and DoWG are
\emph{single-loop} algorithms-- they do not restart the optimization process or
throw away progress. This is a valuable property and one of the reasons we focus
on these algorithms in the paper. Moreover, DoG and DoWG are
\emph{universal}. An algorithm is universal if it achieves the same rate as SGD
for Lipschitz functions and also takes advantage of smoothness when it
exists~\citep{levy17_onlin_to_offlin_conver_univer}, without any prior knowledge
of whether $f$ is smooth. DoG and DoWG enjoy this property in the bounded domain setting.

\section{Tuning-free Optimization Under an Unbounded Domain}
\label{sec:tuning-free-optim-unbounded}

We now continue our investigation to the general, unbounded setting where
$\mathcal{X} = \mathbb{R}^d$. Now, the diameter $D$ in \Cref{asm:bounded-domain} is infinite. The
convergence of SGD is then characterized by the initial distance to the optimum
$D_{\ast} = \norm{x_0 - x_{\ast}}$~\citep{liu23_high_probab_conver_stoch_gradien_method}.
We can show that SGD with optimally-tuned stepsizes achieves with probability at
least $1-\delta$ the convergence rates
\begin{align}\label{eq:52}
f(x_{\mathrm{out}}) - f_{\ast} \leq \upsilon \cdot \begin{cases}
                              \frac{D_{\ast} L^2}{T} + \frac{\sigma D_{\ast}}{\sqrt{T}}  & \text { if } f \text { is $L$-smooth}, \\
                              \frac{\sqrt{G^2 + \sigma^2} D_{\ast}}{\sqrt{T}}   & \text { if }  f \text { is $G$-Lipschitz},
                              \end{cases}
\end{align}
where $\upsilon = \mathrm{poly}\log \frac{1}{\delta}$, $\sigma$ is the maximum stochastic
gradient noise norm, and $D_{\ast} = \norm{x_0 - x_{\ast}}$ is the initial distance
from the minimizer. An algorithm is a tuning-free version of SGD in the unbounded setting
if it can match the best SGD rates given by \Cref{eq:52} up to polylogarithmic
factors given access to the hints
$\underline{D}, \overline{D}, \underline{\sigma}, \overline{\sigma}$, and
$\underline{G}, \overline{G}$ or $\underline{L}, \overline{L}$. This is a tall
order: unlike in the bounded setting, a tuning-free algorithm now has to compete
with SGD's convergence on \emph{any} possible initialization.

\textbf{Deterministic setting.} When there is no stochastic gradient noise, i.e.
$\sigma = 0$ and the algorithm accesses gradients according to the deterministic
first-order oracle (\Cref{def:det-oracle}), Tuning-free versions of gradient
descent exist. For example, the Adaptive Polyak
algorithm~\citep{hazan19_revis_polyak_step_size}, a restarted version of
gradient descent with the Polyak stepsizes~\citep{polyak1987introduction} is
tuning-free:

\begin{restatable}[\citet{hazan19_revis_polyak_step_size}]{proposition}{adaptivepolyak}\label{prop:adaptive-polyak-deterministic}
The Adaptive Polyak algorithm from \citep{hazan19_revis_polyak_step_size} is tuning-free in the deterministic setting.
\end{restatable}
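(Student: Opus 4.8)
The plan is to derive \Cref{prop:adaptive-polyak-deterministic} from the convergence analysis of \citet{hazan19_revis_polyak_step_size}, supplemented by a short argument that translates the hints of \Cref{def:tuning-free-general} into the quantities the algorithm actually consumes. The starting point is that gradient descent with the \emph{exact} Polyak stepsize $\eta_t = (f(x_t) - f_\ast)/\norm{\nabla f(x_t)}^2$ is already parameter-free in the deterministic setting: it uses no knowledge of $L$, $G$, or $D_\ast = \norm{x_0 - x_\ast}$, and it is universal, automatically attaining the smooth rate $f(x_T) - f_\ast = O(L D_\ast^2 / T)$ and the Lipschitz rate $\min_{t \le T} f(x_t) - f_\ast = O(G D_\ast / \sqrt{T})$ — i.e.\ the deterministic specialization of \eqref{eq:52} (set $\sigma = 0$). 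Its only unknown is $f_\ast$. The Adaptive Polyak algorithm handles this by running phases of Polyak-stepsize gradient descent with a running underestimate $\hat f \le f_\ast$, refining $\hat f$ upward by a bisection/doubling search whenever a phase stalls. The structural fact driving the analysis is that with an underestimate $\hat f$ the iterates stay in the ball $\norm{x_t - x_0} \le 2 D_\ast$ and keep decreasing $\norm{x_t - x_\ast}$ until $f(x_t) - f_\ast$ drops to order $f_\ast - \hat f$; since $\norm{x_t - x_0}$ is observable, a violation of that ball condition certifies that $\hat f$ is too small and should be increased, and once $\hat f$ is close enough to $f_\ast$ the ball test never fires again and the phase just converges.

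First I would pin down exactly what external inputs the scheme needs beyond the oracle: (i) an initial interval $[\underline\phi, \overline\phi]$ guaranteed to contain $f_\ast$, to initialize the search over $\hat f$, and (ii) an upper bound on $D_\ast$, used as the radius threshold in the ball-exit test (and to budget iterations per phase). Both are available from the hints. The deterministic oracle returns $f(x_0)$ and $\nabla f(x_0)$, and convexity gives $0 \le f(x_0) - f_\ast \le \langle \nabla f(x_0), x_0 - x_\ast\rangle \le \norm{\nabla f(x_0)}\, D_\ast \le \norm{\nabla f(x_0)}\, \overline D$, so one may take $\underline\phi = f(x_0) - \norm{\nabla f(x_0)}\,\overline D$ and $\overline\phi = f(x_0)$; and $D_\ast \le \overline D$ directly. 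In particular the algorithm never touches $\underline L, \overline L, \underline G, \overline G$ — consistent with it being universal — and uses only $\overline D$ among the supplied hints, together with the observable $f(x_0)$ and $\norm{\nabla f(x_0)}$.

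Next I would invoke the convergence guarantee of \citet{hazan19_revis_polyak_step_size}: after a total of $T$ gradient queries the algorithm returns the best iterate seen, $\bar x$, with $f(\bar x) - f_\ast \le \iota \cdot (\text{deterministic rate in }\eqref{eq:52})$, where $\iota$ is polylogarithmic in the number of phases and in the ratio of the width $W = \overline\phi - \underline\phi$ of the initial $f_\ast$-interval to the accuracy being matched. Plugging in, $W = \norm{\nabla f(x_0)}\,\overline D \le \min(G,\, L D_\ast)\,\overline D$, while the target rate is of order $G D_\ast/\sqrt T$ (Lipschitz case) or $L D_\ast^2 / T$ (smooth case); hence the relevant ratio is at most $\overline D \sqrt T / D_\ast \le (\overline D/\underline D)\sqrt T$ (resp.\ $\overline D^2 T / D_\ast^2 \le (\overline D/\underline D)^2 T$), using $\norm{\nabla f(x_0)} \le \min(G, L D_\ast)$ and $D_\ast \ge \underline D$. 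Therefore $\iota = \mathrm{poly\log}(\overline D/\underline D, T)$, which is precisely the degradation allowed by \eqref{eq:B-error} in \Cref{def:tuning-free-general} (the hints $\underline\sigma, \overline\sigma$ being vacuous since $\sigma = 0$). This proves the proposition.

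The main obstacle is the bookkeeping in that last step. The guarantee in \citep{hazan19_revis_polyak_step_size} is naturally phrased for a fixed target accuracy, so one must verify that: (a) the iterations spent in phases with a too-small guess $\hat f$ — effectively wasted — amount only to a polylogarithmic fraction of $T$, so the advertised per-phase rate still holds within the $T$-query budget; (b) a phase run with the adaptive ball-exit stopping rule (rather than a horizon tuned using $L$ or $G$) still attains the Polyak rate for that phase; and (c) the overhead enters the bound \emph{multiplicatively}, not as an additive lower-order term, which \Cref{def:tuning-free-general} forbids. Each of these is implicit in \citep{hazan19_revis_polyak_step_size}, but assembling them into the exact form of \eqref{eq:B-error}, and confirming that no hint other than $\overline D$ (hence only the ratio $\overline D/\underline D$) surfaces in $\iota$, is where the care is required.
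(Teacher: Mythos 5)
Your proposal is correct and follows essentially the same route as the paper: invoke the guarantee of \citet{hazan19_revis_polyak_step_size} (their Theorem 2), whose only extra logarithmic factor involves the gap $f_\ast - \hat f_0$, and then supply the valid lower bound $\hat f_0 = f(x_0) - \norm{\nabla f(x_0)}\,\overline D$ obtained from convexity and the hint $\overline D$. Your additional verification that the resulting logarithmic factor is indeed $\mathrm{poly\log}(\overline D/\underline D, T)$ is a worthwhile step the paper leaves implicit.
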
 

This is far from the only solution, and we mention a few others next.
Parameter-free methods augmented with normalization are also tuning-free and
universal, e.g.\ plugging in $d_0 = \underline{D}$
in~\citep{orabona23_normal_gradien_all} gives tuning-free algorithms matching
SGD. The bisection algorithm from \citep{carmon22_makin_sgd_param_free} is also
tuning-free, as is the simple doubling trick. Finally, T-DoG and T-DoWG,
variants of DoG and DoWG which use polylogarithmically smaller stepsizes than
DoG and DoWG, are also tuning-free, as the following direct corollary of
\citep{ivgi23_dog_is_sgds_best_frien,khaled23_dowg_unleas} shows.

\begin{restatable}{proposition}{dogdowgdetermenistic}\label{prop:dog-dowg-deterministic}
T-DoG and T-DoWG are tuning-free in the deterministic setting.
\end{restatable}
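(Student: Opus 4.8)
The plan is to obtain \Cref{prop:dog-dowg-deterministic} as a direct specialization of the DoG and DoWG convergence guarantees of \citet{ivgi23_dog_is_sgds_best_frien} and \citet{khaled23_dowg_unleas} to the deterministic first-order oracle of \Cref{def:det-oracle} (so $\sigma = 0$, and the hints $\underline{\sigma}, \overline{\sigma}$ are never consulted). Recall that $r_\epsilon$ is fixed to $\underline{D}$; since $D_\ast \in [\underline{D}, \overline{D}]$ this gives $r_\epsilon \leq D_\ast \leq \overline{D}$, which is the regime in which those analyses are sharp. The convergence theorems for these methods bound $f(x_{\mathrm{out}}) - f_\ast$ by a quantity of the form $\tilde{\mathcal{O}}\left(\bar{r}_T \sqrt{\sum_{k \leq T} \sqn{g_k}} \,/\, T\right)$, where $\bar{r}_T = \max_{k \leq T} \max(\norm{x_k - x_0}, r_\epsilon)$ and the $\tilde{\mathcal{O}}(\cdot)$ hides a factor polylogarithmic in $\bar{r}_T / r_\epsilon$ and $T$ (the $1/\delta$ dependence present in the stochastic statements drops out because the run is deterministic). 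T-DoG and T-DoWG differ from the plain versions only by shrinking the stepsizes by a polylogarithmic factor, and by the stability guarantees established for these variants in the cited works this is exactly enough to ensure $\bar{r}_T \leq \mathrm{poly\log}(\overline{D}/\underline{D}, T) \cdot D_\ast$, so the movement term is controlled by $D_\ast$ up to the allowed logarithmic slack.

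It then remains to bound $\sum_{k \leq T} \sqn{g_k}$ in each class. When $f$ is $G$-Lipschitz we have $\norm{g_k} = \norm{\nabla f(x_k)} \leq G$, so $\sum_{k \leq T} \sqn{g_k} \leq G^2 T$ and the bound collapses to $\tilde{\mathcal{O}}(G D_\ast / \sqrt{T})$, matching the Lipschitz case of \cref{eq:52} with $\sigma = 0$ up to polylogarithmic factors in $\overline{D}/\underline{D}$ and $T$. When $f$ is $L$-smooth I would instead combine the self-bounding inequality $\sqn{\nabla f(x_k)} \leq 2 L (f(x_k) - f_\ast)$ (which follows from $L$-smoothness and lower-boundedness alone) with the telescoping inequality $\sum_{k \leq T} (f(x_k) - f_\ast) \leq \tilde{\mathcal{O}}(\bar{r}_T \sqrt{\sum_{k \leq T} \sqn{g_k}})$ already produced inside the DoG/DoWG convexity analysis; this yields a self-referential bound on $\sum_{k \leq T}(f(x_k) - f_\ast)$ whose solution, after substituting $\bar{r}_T = \tilde{\mathcal{O}}(D_\ast)$, recovers the smooth case of \cref{eq:52} with $\sigma = 0$ up to the same polylogarithmic factors. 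Crucially, the value of $L$ (respectively $G$) is used nowhere in the algorithm or in this reasoning, so the hints $\underline{L}, \overline{L}, \underline{G}, \overline{G}$ are not needed and the identical run attains both rates: the universality of DoG and DoWG is inherited verbatim.

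The step I expect to be the main obstacle --- and the reason we phrase this as a corollary rather than an independent theorem --- is the stability estimate $\bar{r}_T = \tilde{\mathcal{O}}(D_\ast)$. Plain DoG and DoWG do not, in general, come with a worst-case guarantee that the iterates stay within a polylogarithmic factor of their initial distance to the optimum; the naive recursion $\bar{r}_{t+1} \leq \bar{r}_t + \eta_t \norm{g_t}$ only gives an exponential-in-$T$ bound, and it is precisely to tame this that the ``T-'' stepsize shrinkage is introduced and that the more delicate, convexity-based stability lemma of \citet{ivgi23_dog_is_sgds_best_frien} (and its DoWG analogue) is needed. Everything else --- fixing $r_\epsilon = \underline{D}$, the two gradient-sum estimates, and the bookkeeping that turns $\log(\bar{r}_T / r_\epsilon) = \log(\bar{r}_T/\underline{D})$ into $\mathrm{poly\log}(\overline{D}/\underline{D}, T)$ once $\bar{r}_T = \tilde{\mathcal{O}}(D_\ast) \leq \tilde{\mathcal{O}}(\overline{D})$ --- is routine.
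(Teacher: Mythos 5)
Your proposal is correct and follows essentially the same route as the paper, which simply defers to the DoWG supplementary material (and the analogous DoG analysis) for exactly this specialization; your reconstruction of that argument — deterministic oracle with $\sigma=0$, the T-variant stability estimate $\bar{r}_T = \tilde{\mathcal{O}}(D_\ast)$ as the crux, $\sum_k \sqn{g_k} \leq G^2 T$ in the Lipschitz case, and the self-bounding inequality plus the quadratic-solving step in the smooth case — matches both the cited proofs and the techniques the paper itself deploys in the proof of \Cref{thm:dog-dowg-tuning-free-bounded}. You also correctly identify that the iterate-stability lemma for the shrunken stepsizes is the only non-routine ingredient.
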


T-DoG and T-DoWG use the same stepsize structure as DoG and DoWG (given
in~\cref{eq:dog-update,eq:dowg-update}), but divide these stepsizes by running
logarithmic factors as follows
\begin{align*}
  \text{T-DoG:} \ \eta_t = \frac{\overline{r}_t}{\sqrt{u_t} \log_+ \frac{u_t}{u_0}},
&& \text{T-DoWG:} \ \eta_t = \frac{\overline{r}_t^2}{\sqrt{v_t} \log_+ \frac{v_t}{v_0}}.
\end{align*}
Both methods achieve the same convergence guarantee as in~\cref{eq:52} up to
polylogarithmic factors in the hints.

\subsection{Impossibility Results in the Stochastic Setting}\label{sec:imposs-results-stoch}
The positive results in the deterministic setting give us some hope to obtain a
tuning-free algorithm. Unfortunately, the stochastic setting turns out to be a
tougher nut to crack. Our first major result, given below, slashes any hope of
finding a tuning-free algorithm for smooth and stochastic convex optimization.

\begin{theorem}\label{thm:impossibility-result-smooth}
For any polylogarithmic function $\iota: \mathbb{R}^4 \rightarrow \mathbb{R}$ and any algorithm $\mathcal{A}$, there
exists a time horizon $T$, an $L$-smooth and convex function $f$, and a
stochastic oracle $\mathcal{O}(f, \sigma_f)$, and valid hints
$\underline{L}, \overline{L}, \underline{D}, \overline{D}, \underline{\sigma}, \overline{\sigma}$
such that the algorithm $\mathcal{A}$ initialized at some $x_0$ returns with some constant probability a point
$x_{\mathrm{out}}$ satisfying
\begin{align*}
\mathrm{Error}_{\mathcal{A}} &= f(x_{\mathrm{out}}) - f_{\ast} > \iota \left( \frac{\overline{L}}{\underline{L}}, \frac{\overline{D}}{\underline{D}}, \frac{\overline{\sigma}}{\underline{\sigma}}, T \right) \cdot \left[ \frac{L D_{\ast}^2}{T} + \frac{\sigma_f D_{\ast}}{\sqrt{T}} \right],
\end{align*}
where $D_{\ast} = \norm{x_0 - x_{\ast}}$ is the initial distance to the optimum and $\sigma_f$
is the maximum norm of the stochastic gradient noise.
\end{theorem}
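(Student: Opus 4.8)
The plan is to construct a hard instance via an indistinguishability (or "needle in a haystack") argument. The core intuition: a tuning-free algorithm must commit to an exploration scale based only on the hints $\underline{D}, \overline{D}$, but the true optimum $x_\ast$ can be placed anywhere in a ball of radius $\overline{D}$, so $D_\ast$ can be forced to be as small as $\underline{D}$ while the algorithm's "spread" is governed by the worst case. In the stochastic setting, the noise masks the location of $x_\ast$: with bounded noise $\sigma_f$, after $T$ queries the algorithm receives only $O(T)$ bits of useful information, yet the location of $x_\ast$ within the hint ball carries roughly $\log(\overline{D}/\underline{D})$ worth of "address" per coordinate, and crucially the \emph{signal} distinguishing the true $f$ from a decoy is of order $L \cdot (\text{scale})$ which can be swamped by noise $\sigma_f$.

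Concretely, I would take $f$ to be a one-dimensional (or low-dimensional) quadratic-like function, e.g. a Huber-type smoothing so it is $L$-smooth and convex, centered at an unknown point $x_\ast \in \{-c, +c\}$ for a scale $c$ to be chosen. The stochastic oracle adds noise bounded by $\sigma_f$, chosen large enough (relative to $L c$) that the gradient signal at the algorithm's early query points is statistically indistinguishable under the two hypotheses $x_\ast = +c$ versus $x_\ast = -c$ for the first $T$ steps. Set the hints so that $\underline{D} \le c \le \overline{D}$ is consistent with both hypotheses, and likewise $\underline{\sigma}, \overline{\sigma}, \underline{L}, \overline{L}$ bracket the true values. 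By a standard Le Cam / two-point argument (total variation between the two $T$-fold product measures on transcripts is bounded below by a constant when $T \sigma_f^{-2} (Lc)^2 = O(1)$), any algorithm errs on at least one hypothesis with constant probability: it outputs a point far (distance $\Omega(c)$) from the wrong center, incurring error $\Omega(L c^2)$ since $f$ behaves quadratically there. Meanwhile, for that hypothesis, $D_\ast = \norm{x_0 - x_\ast}$ can be made as small as we like by placing $x_0$ near one of the two centers; more carefully, I would use a slight asymmetry or a family of more than two centers so that $D_\ast$ is genuinely small (of order $\underline{D}$) while the incurred error stays $\Omega(L \overline{D}^2)$. Then the ratio $\mathrm{Error}_{\mathcal{A}} / [L D_\ast^2 / T + \sigma_f D_\ast / \sqrt{T}]$ blows up as $\overline{D}/\underline{D} \to \infty$ faster than any fixed polylog $\iota$, since we may take $T$ and the hint ratios as large as needed after $\iota$ is fixed.

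The key steps in order: (1) fix the polylog $\iota$ and specify target values of the hint ratios large enough that $\iota$ evaluated there is beaten by the gap we will produce; (2) define the Huber-smoothed convex function family parametrized by the center, verify $L$-smoothness and convexity; (3) define the bounded-noise stochastic oracle and compute the total-variation / KL bound between transcripts under two (or polynomially many) centers, choosing the scale $c$, noise level $\sigma_f$, and horizon $T$ to force indistinguishability; (4) invoke the two-point lower bound to conclude constant-probability failure on some center; (5) for that center, upper-bound $D_\ast$ and lower-bound the realized suboptimality, and assemble the final ratio, checking it exceeds $\iota(\overline{L}/\underline{L}, \overline{D}/\underline{D}, \overline{\sigma}/\underline{\sigma}, T)$.

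The main obstacle I anticipate is the bookkeeping that simultaneously (a) keeps $D_\ast$ small — ideally $\Theta(\underline{D})$ — so the denominator $L D_\ast^2 / T + \sigma_f D_\ast/\sqrt{T}$ is tiny, while (b) the adversary's uncertainty about $x_\ast$ is spread over a region of radius $\Theta(\overline{D})$ so the unavoidable error is $\Theta(L \overline{D}^2)$ or at least grows polynomially in $\overline{D}/\underline{D}$. A naive two-point construction with symmetric centers at $\pm c$ gives $D_\ast \approx c$ and error $\approx Lc^2$, a ratio of only $\Theta(T)$ — not enough to beat a polylog in $T$ unless we also exploit the hint ratio. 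The resolution is to use a \emph{grid} of candidate centers of spacing $\approx \underline{D}$ spanning $[-\overline{D}, \overline{D}]$: the algorithm cannot localize $x_\ast$ better than the noise permits in $T$ steps, so it is forced, on average over the grid, to output a point $\Omega(\overline{D})$ from the truth while some grid point has $D_\ast = O(\underline{D})$; balancing the information-theoretic localization radius against $\sigma_f, T$ and the grid geometry is where the real care is needed, and it is essentially the stochastic-noise analogue of the deterministic "doubling is unavoidable without noise control" phenomenon, now made quantitatively lossy by the noise floor $\sigma_f$.
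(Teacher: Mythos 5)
There is a genuine gap, and it sits exactly where you flagged that "the real care is needed." A symmetric Le Cam two-point argument with a common noise level $\sigma_f$ calibrated for indistinguishability ($\sigma_f \gtrsim L c \sqrt{T}$) can only force an error of order $L c^2$ against a target of order $\sigma_f D_\ast/\sqrt{T} \approx L c^2$: it merely reproduces the minimax rate that tuned SGD already attains, so the ratio is $\Theta(1)$ (not $\Theta(T)$ as you state). Your proposed fix --- a grid of centers spanning $[-\overline{D},\overline{D}]$ with some center at distance $O(\underline{D})$ from $x_0$ --- does not repair this, for two reasons. First, the instance with $D_\ast = O(\underline{D})$ is precisely the one on which the trivial output $x_{\mathrm{out}} = x_0$ already meets the target: by smoothness $f(x_0)-f_\ast \le \tfrac{L}{2}D_\ast^2$, while the target contains the term $\sigma_f D_\ast/\sqrt{T} \approx L\overline{D}\,\underline{D} \ge L\underline{D}^2$ once $\sigma_f$ is large enough for grid-wide indistinguishability; an averaging or Fano argument forces failure on \emph{most} centers but cannot force it on that particular one. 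Second, with a uniform noise level the accuracy demanded around center $c_j$ is only $|x_{\mathrm{out}} - c_j| \lesssim \sqrt{\iota\,\sigma_f |x_0 - c_j|/(L\sqrt{T})}$, and for the far centers this radius is already $\Omega(\overline{D})$, so the admissible intervals overlap and a single output can satisfy every instance in the grid simultaneously.

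The paper's proof escapes this by making the two confusable instances have wildly \emph{different} advertised noise bounds, rather than wildly different locations under a common noise floor. Both oracles return the literally identical branch $f_2 = h_2$ with probability $1-1/T$ per query (hence identical transcripts with probability about $e^{-1}$ over the run); all distinguishing information, and all of the second oracle's noise magnitude, is confined to the rare branch. This lets $\sigma_h \approx T L u$ be polynomially large in $T$ while $\sigma_f = \sigma$ stays small, and the hint ratios $\overline{\sigma}/\underline{\sigma}$ and $\overline{D}/\underline{D}$ are polynomial in $T$, so $\iota = \mathrm{poly}(\log T)$. With $x_0 = v = T^2$, $u = v+1$, and $L = \sigma T$, the guarantee for $f$ (large $D_\ast(f) = T^2$, small noise) forces $x_{\mathrm{out}} \lesssim \sqrt{\iota}\,T^{3/2}$, while the guarantee for $h$ (tiny $D_\ast(h)=1$, huge noise) still pins $x_{\mathrm{out}} \gtrsim T^2 - \sqrt{\iota}\,T^{5/4}$; these are incompatible for large $T$. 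A KL or total-variation computation with a common sub-Gaussian scale cannot manufacture this asymmetry, because the worst-case noise norm entering the target rate is then tied directly to the indistinguishability budget. The essential idea missing from your outline is this decoupling of the noise bound that appears in the benchmark from the noise actually seen in the typical transcript.
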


\textbf{Proof idea.} This lower bound is achieved by $1$-dimensional functions.
In particular, we construct two one-dimensional quadratic functions $f$ and $h$
with associated oracles $\mathcal{O}(f, \sigma_f)$ and $\mathcal{O}(h, \sigma_h)$, and we supply the algorithm
with hints that are valid for both functions and oracles. We show that with some constant
probability, the algorithm observes the same stochastic gradients from both
$\mathcal{O}(f, \sigma_f)$ and $\mathcal{O}(h, \sigma_h)$ for the entire run. Since the algorithm cannot tell
apart either oracle, it must guarantee that
\cref{eq:52} holds with high probability for both $f$
and $h$ if it is to be tuning-free. Now, if we choose $f$ and $h$ further apart,
ensuring that their respective oracles return the same gradients with some
constant probability becomes harder. On the other hand, if we choose $f$ and $h$
too close, the algorithm can conceivably guarantee that
\cref{eq:52} holds (up to the same polylogarithmic factor of the hints) for both
of them. By carefully choosing $f$ and $h$ to balance out this tradeoff, we show
that no algorithm can be tuning-free in the unbounded and stochastic setting.
The full proof is provided in \Cref{sec:proof-impossibility-smooth} in the
appendix.

\textbf{Comparison with prior lower bounds.} The above theorem shows a
fundamental separation between the deterministic and stochastic settings when
not given knowledge of the problem parameters. The classical lower bounds for
deterministic and stochastic optimization
algorithms~\citep{nesterov18_lectures_cvx_opt,woodworth16_tight_compl_bound_optim_compos_objec,carmon17_lower_bound_findin_station_point_ii}
rely on a chain construction that is agnostic to whether the optimization
algorithm has access to problem parameters. On the other hand, lower bounds from
the online learning literature show that tuning-free optimization matching SGD
is impossible when the oracle can be adversarial (and not stochastic), see e.g.\
\citep{cutkosky17_onlin_learn_without_prior_infor,cutkosky17_onlin_convex_optim_with_uncon_domain_losses}.
However, adversarial oracles are much stronger than stochastic oracles, as they
can change the function being optimized in response to the algorithm's choices.
Our lower bound is closest in spirit to the lower bounds from the
stochastic multi-armed bandits literature that also rely on confusing the
algorithm with two close problems~\citep{mannor04_bandits}.

Our next result shows that tuning-free optimization is also impossible in the
nonsmooth case.

\begin{theorem}\label{thm:impossibility-result-non-smooth}
For any polylogarithmic function $\iota: \mathbb{R}^4 \rightarrow \mathbb{R}$ and any
algorithm $\mathcal{A}$, there exists a time horizon $T$, valid hints
$\underline{L}, \overline{L}, \underline{D}, \overline{D}, \underline{\sigma}, \overline{\sigma}$,
an $G$-Lipschitz and convex function $f$ and an oracle $\mathcal{O}(f, \sigma_f)$ such that the
algorithm $\mathcal{A}$ returns with some constant probability a point $x_{\mathrm{out}}$ satisfying
\begin{align*}
  \mathrm{Error}_{\mathcal{A}} &= f(x_{\mathrm{out}}) - f_{\ast} > \iota \left( \frac{\overline{G}}{\underline{G}}, \frac{\overline{D}}{\underline{D}}, \frac{\overline{\sigma}}{\underline{\sigma}}, T \right) \cdot \left[ \frac{\sqrt{G^2 + \sigma^2} D_{\ast}}{\sqrt{T}}  \right].
\end{align*}
\end{theorem}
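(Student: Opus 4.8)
The plan is to mirror the two-function confusion argument sketched for the smooth case (\Cref{thm:impossibility-result-smooth}), but now using one-dimensional Lipschitz (rather than quadratic) functions so that the target rate is the $\sqrt{G^2+\sigma^2}D_\ast/\sqrt{T}$ bound from \cref{eq:52}. First I would fix hints $\underline{D},\overline{D},\underline{G},\overline{G},\underline{\sigma},\overline{\sigma}$ and, given the candidate polylog degradation factor $\iota$, choose a scale separation $\rho = \overline{D}/\underline{D}$ (and correspondingly for $G,\sigma$) large enough that $\iota$ evaluated at these ratios is dominated by a genuine polynomial gap we will engineer. Concretely I would take $f(x) = G\,\abs{x - x_f}$ and $h(x) = G\,\abs{x - x_h}$, two absolute-value functions with the same Lipschitz constant $G$ but minimizers $x_f, x_h$ placed at distances $D_f = \abs{x_0 - x_f}$ and $D_h = \abs{x_0 - x_h}$ from the common initialization $x_0$, with $D_h \gg D_f$ chosen at the boundary of what the hints $[\underline{D},\overline{D}]$ allow. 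Both functions are $G$-Lipschitz and convex, and the hints $\underline{G},\overline{G}$ can be made valid for both by picking $G$ in the allowed window.

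The second step is the oracle coupling. Each oracle $\mathcal{O}(f,\sigma_f)$ returns $g(x) = \nabla f(x) + \xi$ where $\xi$ is a bounded noise (say uniform or two-point on $[-\sigma,\sigma]$) chosen so that \Cref{asm:almost-sure-bound} holds with $R = \sigma$. The key is that on the region between $x_f$ and $x_h$ the true (sub)gradients of $f$ and $h$ are both $+G$ (or both $-G$), so there the oracles are \emph{identical}; they differ only once an iterate crosses $x_f$ (for $f$) without crossing $x_h$. I would choose the noise magnitude $\sigma$ large relative to $G$, and choose $D_h$ versus $D_f$ and the horizon $T$ so that the "information" an algorithm can extract in $T$ queries — essentially $T$ samples of a noisy sign — is insufficient to distinguish, with constant probability, the event that the minimizer is near $x_f$ from the event that it is near $x_h$. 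This is a standard two-point / Le Cam argument: the total variation (or KL) between the two $T$-fold product laws of the observed gradient sequences stays bounded below $1$ as long as $\abs{x_f - x_h}$ is small compared to $\sigma\sqrt{T}/G$ times the appropriate constant, so with constant probability the algorithm sees an identical transcript under both oracles and hence must output the same $x_{\mathrm{out}}$.

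The third and final step is the reduction: since $x_{\mathrm{out}}$ is the same under both oracles with constant probability, it cannot be simultaneously close to $x_f$ and to $x_h$; in particular $\max\{f(x_{\mathrm{out}}) - f_\ast,\ h(x_{\mathrm{out}}) - h_\ast\} \geq \tfrac{G}{2}\abs{x_f - x_h}$. Plugging in the chosen separation $\abs{x_f - x_h} \asymp \sigma\sqrt{T}/G$, the right-hand side is $\asymp \sigma\sqrt{T}$, whereas for whichever of $f,h$ has the \emph{smaller} distance to $x_0$ (call it $D_\ast = D_f$, kept small) the tuning-free target is only $\iota \cdot \sqrt{G^2+\sigma^2}\,D_f/\sqrt{T}$, which we have arranged to be far smaller by taking $D_f$ tiny and $T$ large; a symmetric bookkeeping handles the case where $h$ is the "hard" instance. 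Choosing the various parameters to make this quantitative gap survive the polylog factor $\iota$ evaluated at the (large but fixed) hint ratios completes the argument. I expect the main obstacle to be exactly this last balancing act: one must simultaneously (i) keep the separation $\abs{x_f-x_h}$ small enough that the Le Cam bound gives constant indistinguishability over all $T$ steps, (ii) keep it large enough that $\tfrac{G}{2}\abs{x_f-x_h}$ beats $\iota \cdot \sqrt{G^2+\sigma^2}D_\ast/\sqrt{T}$, and (iii) ensure the hint windows $[\underline{D},\overline{D}]$ etc.\ can be chosen valid for both instances with ratios that are absolute constants independent of $T$ (otherwise $\iota$ could grow with $T$ and swallow the gap). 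Handling an \emph{adaptive} algorithm — whose queries depend on past noisy observations — requires the indistinguishability bound to be stated for the full interactive protocol, i.e.\ a data-processing / chain-rule argument on the KL divergence of the transcripts rather than a fixed-design comparison; this is the delicate point but is by now routine in the stochastic-bandit-lower-bound style referenced by the authors (\citep{mannor04_bandits}).
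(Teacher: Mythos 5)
Your high-level plan (two confusable one-dimensional Lipschitz instances, a transcript-indistinguishability argument, then a contradiction between the two tuning-free targets) is in the right spirit, but the specific mechanism you propose cannot be made to work, and the paper's proof uses a genuinely different device to get around exactly the obstacle you run into.

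First, a local error: your description of where the two oracles agree is backwards. For $f(x)=G\abs{x-x_f}$ and $h(x)=G\abs{x-x_h}$ with $x_f<x_h$, the subgradients \emph{differ} (by $2G$) precisely on the interval between the two minimizers and agree outside it; "past $x_f$ but not past $x_h$" \emph{is} that interval, so your two sentences contradict each other. More importantly, this means the per-query discrepancy between the two oracles at any distinguishing point is $2G$ regardless of $\abs{x_f-x_h}$, so your Le Cam condition is not "$\abs{x_f-x_h}$ small compared to $\sigma\sqrt{T}/G$": since an adaptive algorithm can spend all $T$ queries in the differing region, the chain rule gives total KL $\asymp T G^2/\sigma^2$, and constant-probability indistinguishability forces $\sigma \gtrsim G\sqrt{T}$ (and strictly worse for uniform or two-point noise, whose shifted supports are nearly disjoint). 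This is fatal: once $\sigma\gtrsim G\sqrt{T}$, the target $\iota\sqrt{G^2+\sigma^2}\,D_{\ast}/\sqrt{T}\gtrsim \iota\, G D_{\ast}\geq f(x_0)-f_{\ast}$ is vacuous for a $G$-Lipschitz function — even outputting $x_0$ satisfies it for both instances — so no contradiction can be extracted. The "balancing act" you flag as the main obstacle is not delicate; it is impossible in your symmetric-noise setup.

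The paper escapes this with an asymmetric mixture-oracle construction rather than i.i.d.\ additive noise. Both $f$ and $h$ are written as mixtures of two deterministic components, where the component drawn with probability $1-\tfrac{1}{T}$ is \emph{literally the same function} $f_2=h_2$ for both oracles. Hence with probability $(1-\tfrac1T)^T\approx e^{-1}$ the entire interactive transcript is identical under both oracles — adaptivity is handled for free, with no KL computation. Crucially the noise bound for $f$ stays at $\sigma_f\leq G$, so $f$'s target remains the stringent $\iota G/\sqrt{T}$ (with $D_{\ast}(f)=1$), while $h$ pays a huge noise bound $\sigma_h\leq GT$ but has its minimizer placed at distance only $1/T$ from $x_0$, so even its weak target still pins $x_{\mathrm{out}}$ within $O(\iota/\sqrt{T})$ of $u=1-\tfrac1T$. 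The two constraints are incompatible for large $T$. If you want to salvage your write-up, you need to import this asymmetry (one instance with tiny noise and large $D_{\ast}$, one with huge noise and tiny $D_{\ast}$, indistinguishable via a shared mixture component), not tune a symmetric two-point construction.
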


The proof technique used for this result relies on a similar construction as
\Cref{thm:impossibility-result-smooth} but uses the absolute loss instead of
quadratics.

\textbf{Existing algorithms and upper bounds in the stochastic setting.}
\citet{carmon22_makin_sgd_param_free}
give a restarted variant of SGD with bisection search. If $f$ is $G$-Lipschitz
and all the stochastic gradients are also bounded by $G$, their method uses the
hint $\overline{G} \geq G$ and achieves the following convergence rate with
probability at least $1-\delta$
\begin{align}
f(\hat{x}) - f_{\ast} \leq c \iota \left(  \eta_{\epsilon} \left (G + \frac{\iota \overline{G}}{T} \right) + \frac{D_{\ast} G}{\sqrt{T}} + \frac{D_{\ast} \overline{G}}{T}   \right),
\end{align}
where $c$ is an absolute constant, $\iota$ a poly-logarithmic and double-logarithmic factor,
and $\eta_{\epsilon}$ an input parameter. If we set $\eta_{\epsilon} = \underline{D}/T \leq \frac{D_{\ast}}{T}$, then the
guarantee of this method becomes
\begin{align}
\label{eq:50}
f(\hat{x}) - f_{\ast} \leq c \iota \left( \frac{D_{\ast} G}{\sqrt{T}} + \frac{D_{\ast} \overline{G}}{T}  \right).
\end{align}
Unfortunately, this dependence does not meet our bar  as the polynomial
dependence on $\overline{G}$ is in a higher-order term. A similar result is
achieved by DoG~\citep{ivgi23_dog_is_sgds_best_frien}. A different sort of guarantee is achieved by
\citet{mhammedi20_lipsc_compar_norm_adapt_onlin_learn}, who give a method with regret
\begin{align}
\label{eq:41}
  \frac{1}{T} \sum_{t=0}^{T-1} \ev{g_t, x_t - x_{\ast}} \leq c \iota \left( \frac{G D_{\ast}}{\sqrt{T}} + \frac{G D_{\ast}^3}{T}  \right),
\end{align}
for some absolute constant $c$ and polylogarithmic factor $\iota$. This result is in
the adversarial setting of online learning, and clearly does not meet the bar
for tuning-free optimization matching SGD due to the cubic term $D_{\ast}^3$.

\subsection{Guarantees Under Benign Noise}
\label{sec:what-noise-distr}

In the last subsection, we saw that tuning-free optimization is in general
impossible. However, it is clear that \emph{sometimes} it is possible to get
within the performance of tuned SGD with self-tuning
methods~\citep{ivgi23_dog_is_sgds_best_frien,defazio23_learn_rate_free_learn_by_d_adapt}.
However, the oracles used
in \Cref{thm:impossibility-result-smooth,thm:impossibility-result-non-smooth} provide
stochastic gradients $g(x)$ such that the noise is almost surely bounded (i.e.
satisfies \Cref{asm:almost-sure-bound}):
\begin{align*}
\norm{g(x) - \nabla f(x)} \leq R.
\end{align*}
So boundedness is clearly not enough to enable tuning-free optimization.
However, we know from prior results (e.g. \citep{carmon22_makin_sgd_param_free})
that if we can reliably estimate the upper bound $R$ on the noise, we can adapt to unknown
distance to the optimum $D_{\ast}$ or the smoothness constant $L$. The main issue that
the oracles in the lower bound of \Cref{thm:impossibility-result-smooth} make it
impossible to do that: while the noise $n(x) = g(x) - \nabla f(x)$ is bounded almost
surely by $R$, the algorithm only gets to observe the same noise $n(x)$ for the
entire optimization run. This foils any attempt at estimating $R$ from the
observed trajectory.

\textbf{A note on notation in this section and the next.} In the past section we
used $\sigma$ to denote a uniform upper bound on the gradient noise, while in this
section and the next we use $\sigma$ to denote the \emph{variance} of the stochastic
gradient noise $n(x)$ rather than a uniform upper bound on it. Instead, we use
$R$ to denote the uniform upper bound on the noise.

We will see that for some notion of \emph{benign noise}, tuning-free
optimization matching SGD is possible. We will develop our results under a more
general assumption on the distribution of the stochastic gradient noise
$g(x) - \nabla f(x)$

\begin{assumption}\label{asm:subg-noise}
(Noise with Sub-Gaussian norm).
For all $x \in \mathbb{R}^d$, the noise vector $n(x) = g(x) - \nabla f(x)$ satisfies
\begin{itemize}
\item $n(x)$ is unbiased: $\ec{g(x)} = \nabla f(x)$.
\item $n(x)$ has sub-gaussian norm with modulus $R$:
\begin{align*}
\pr[ \norm{n(x)} \geq t ] \leq 2 \exp \left( \frac{-t^2}{2 R^2}  \right).
\end{align*}
\item $n(x)$ has bounded variance: $\ecn{n(x)} = \sigma^2 < +\infty$.
\end{itemize}
\end{assumption}

This assumption is very general, it subsumes bounded noise (where $R = \sigma_f$) and
sub-gaussian noise. The next definition gives a notion of \emph{signal-to-noise} that
turns out to be key in characterizing benign noise distributions.

\begin{definition}\label{def:signal-to-noise}
Suppose the stochastic gradient noise satisfies \Cref{asm:subg-noise}. We define the \textbf{signal-to-noise}
ratio associated with the noise as
\begin{align*}
K_{\mathrm{snr}} &= \frac{\sigma}{R} \leq 1.
\end{align*}
\end{definition}

To better understand the meaning of $K_{\mathrm{snr}}$, we consider the
following example. Let $Y$ be a random vector with mean $\ec{Y} = \mu$ and
variance $\ecn{Y} = \sigma^2$. Suppose further that the errors $\norm{Y-\mu}$ are
bounded almost surely by some $R$. Then $Y-\mu$ satisfies the assumptions in
\Cref{asm:subg-noise}. Let $Y_1, \ldots, Y_b$ be independent copies of $Y$. Through
standard concentration results (see \Cref{lem:convergence-of-sample-variance})
we can show that with high probability and for large enough $b$
\begin{align*} \hat{\sigma} \eqdef \frac{1}{n} \sum_{i=1}^n \sqn{Y-\mu} \approxeq \sigma^2.
\end{align*} Now observe that if the ratio $K_{\mathrm{snr}}$ is small, then we
cannot use the sample variance $\hat{\sigma}$ as an estimator for the almost-sure
bound $R$. But if the ratio $K_{\mathrm{snr}}$ is closer to $1$, then we have
$\sigma^2 \approxeq R^2$ and we can use $\hat{\sigma}$ as an estimator for $R$. This fixes the
problem we highlighted earlier: now we are able to get an accurate estimate of
$R$ from the observed stochastic gradients. The next proposition gives examples
of noise distributions where $K_{\mathrm{snr}}$ is close to $1$.

\begin{proposition}\label{prop:special-noise}
Suppose that the noise vectors $g (x) - \nabla f(x)$ follow one of the following two
distributions:
\begin{itemize}
\item A Gaussian distribution with mean $0$ and covariance
        $\frac{\sigma^2}{d} \cdot I_{d \times d}$, with $\sigma > 0$.
\item A Bernoulli distribution, where $[g(x) - \nabla f(x)] = \pm \sigma \phi(x)$
        with equal probability for some $\phi$ such that $\norm{\phi(x)}_2 = 1$ almost
        surely.
\end{itemize}
Then $K_{\mathrm{snr}} = \mathcal{O}(1)$.
\end{proposition}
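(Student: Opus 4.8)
The plan is to verify directly that each of the two candidate distributions satisfies \Cref{asm:subg-noise}, exhibiting in each case an explicit sub-gaussian modulus $R$ that is a constant multiple of $\sigma$. Since (in the notation of this section) $\sigma^2$ is the variance $\mathbb{E}\norm{n(x)}^2$ of the noise $n(x) = g(x) - \nabla f(x)$, this gives $K_{\mathrm{snr}} = \sigma / R = \Theta(1)$, and in particular $K_{\mathrm{snr}} = \mathcal{O}(1)$ as claimed.

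The symmetric Bernoulli case is essentially immediate. Unbiasedness holds because the two signs are equiprobable, so $\mathbb{E}[n(x)] = 0$; since $\norm{\phi(x)} = 1$ almost surely we have $\norm{n(x)} = \sigma$ almost surely, whence $\mathbb{E}\norm{n(x)}^2 = \sigma^2$ matches the variance parameter, and the tail function $t \mapsto \Pr[\norm{n(x)} \ge t]$ equals $1$ for $t \le \sigma$ and $0$ afterwards. In the first regime $2 \exp(-t^2 / (2\sigma^2)) \ge 2 e^{-1/2} > 1$, so \Cref{asm:subg-noise} holds with $R = \sigma$ and hence $K_{\mathrm{snr}} = 1$.

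For the isotropic Gaussian $n(x) \sim \mathcal{N}(0, \tfrac{\sigma^2}{d} I_d)$, unbiasedness is clear and $\mathbb{E}\norm{n(x)}^2 = \mathrm{tr}(\tfrac{\sigma^2}{d} I_d) = \sigma^2$, so the only point requiring care is the sub-gaussian tail. Writing $n(x) = \tfrac{\sigma}{\sqrt d} Z$ with $Z \sim \mathcal{N}(0, I_d)$, I would invoke Gaussian concentration of Lipschitz functions: $z \mapsto \norm{z}$ is $1$-Lipschitz and $\mathbb{E}\norm{Z} \le \sqrt{\mathbb{E}\norm{Z}^2} = \sqrt d$, so $\Pr[\norm{Z} \ge \sqrt d + s] \le e^{-s^2/2}$ for all $s \ge 0$; rescaling gives $\Pr[\norm{n(x)} \ge t] \le \exp\bigl(-\tfrac{d(t - \sigma)^2}{2\sigma^2}\bigr)$ for $t \ge \sigma$. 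The one subtlety is that $(t-\sigma)^2 \ge t^2$ is false, so I would split at $t = 2\sigma$: for $t \ge 2\sigma$ one has $(t-\sigma)^2 \ge t^2/4$ and $d \ge 1$, so the right-hand side is at most $\exp(-t^2/(8\sigma^2)) = \exp(-t^2/(2R^2))$ with $R = 2\sigma$; for $t < 2\sigma = R$ the trivial bound $\Pr[\norm{n(x)} \ge t] \le 1 \le 2 e^{-1/2} \le 2\exp(-t^2/(2R^2))$ handles it. Thus $R = 2\sigma$ is a valid modulus and $K_{\mathrm{snr}} = 1/2$. Neither case presents a real obstacle; the mild care needed is only in the Gaussian tail, where one must observe that $\norm{n(x)}$ concentrates around $\sigma$ at scale $\sigma/\sqrt d$, so the correct modulus is $\Theta(\sigma)$ rather than the much smaller $\Theta(\sigma/\sqrt d)$.
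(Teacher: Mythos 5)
Your proof is correct. The paper in fact states \Cref{prop:special-noise} without providing any proof in the appendix, so there is nothing to compare against; your argument supplies the missing verification and is sound. The Bernoulli case is indeed immediate ($\norm{n(x)} = \sigma$ almost surely, so $R = \sigma$ works because $2e^{-1/2} > 1$ covers the range $t \le \sigma$ and the tail vanishes beyond it). For the Gaussian case you correctly identify the one point of substance: since \Cref{asm:subg-noise} bounds the tail of $\norm{n(x)}$ from zero rather than from its mean, the modulus must be at least the typical value of the norm, which is $\Theta(\sigma)$, not the fluctuation scale $\Theta(\sigma/\sqrt{d})$; your split at $t = 2\sigma$ combined with Gaussian concentration of the $1$-Lipschitz map $z \mapsto \norm{z}$ cleanly yields $R = 2\sigma$ and hence $K_{\mathrm{snr}} = 1/2$. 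One cosmetic remark: the proposition's conclusion ``$K_{\mathrm{snr}} = \mathcal{O}(1)$'' is vacuous as literally written (the definition already forces $K_{\mathrm{snr}} \le 1$); your reading of it as $K_{\mathrm{snr}} = \Theta(1)$, i.e.\ bounded away from zero independently of $d$, is the intended and meaningful statement, and that is what you prove.
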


We now give an algorithm whose convergence rate characterized by the
signal-to-noise ratio $K_{\mathrm{snr}}$. We combine a variane estimation
procedure with the T-DoG/T-DoWG algorithms in
\Cref{alg:dog-with-estimation,alg:dowg-with-estimation}. The next theorem gives
the convergence of this algorithm. This theorem is generic, and does not
guarantee any tuning-free matching of SGD, but can lead to tuning-free matching
of SGD if the signal to noise ratio is high enough.

\begin{theorem}\label{thm:dog-dowg-special-noise}
Suppose we are given access to stochastic gradient estimates $g(x)$ such that
the noise vectors $[g(x) - \nabla f(x)] \in \mathbb{R}^d$ satisfy \Cref{asm:subg-noise} with
modulus $R$ and signal-to-noise ratio $K_{\mathrm{snr}}$. If we run T-DoG or
T-DoWG with variance estimation
(\Cref{alg:dog-with-estimation,alg:dowg-with-estimation}) with a minibatch size
$b \geq 2$ large enough to satisfy
\begin{align*} c \cdot \left[ \sqrt{\frac{\log \frac{2b T}{\delta}}{b}} + \frac{\log \frac{2(b \lor d) T}{\delta}}{b} \right] \leq K_{\mathrm{snr}}^2 - \theta,
\end{align*} where $c$ is some absolute constant and $\theta \in [0, K_{\mathrm{snr}}]$
is some known constant. Then
\Cref{alg:dog-with-estimation,alg:dowg-with-estimation} with either option
returns a point $x_{\mathrm{out}}$ such that with probability at least $1-\delta$,
\begin{itemize}
\item If $f$ is $L$-smooth:
\begin{align*} f(x_{\mathrm{out}}) - f_{\ast} \leq c \iota \left( \frac{L D_{\ast}^2 b}{T_{\mathrm{total}}} + \frac{\theta^{-1} R D_{\ast} \sqrt{b}}{\sqrt{T_{\mathrm{total}}}} \right),
\end{align*} where $D_{\ast} = \norm{x_0 - x_{\ast}}$, $c$ is an absolute constant,
$\iota$ is a polylogarithmic factor of the hints, and $T_{\mathrm{total}}$ denotes
the total number of stochastic gradient accesses.
\item If $f$ is $G$-Lipschitz:
\begin{align*} f(x_{\mathrm{out}}) - f_{\ast} \leq c \iota \frac{\sqrt{G^2 + \theta^{-2} R^2} D_{\ast} \sqrt{b}}{\sqrt{T_{\mathrm{total}}}}.
\end{align*}
\end{itemize}
\end{theorem}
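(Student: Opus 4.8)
The plan is to reduce the analysis to the known convergence guarantees of T-DoG and T-DoWG (Proposition~\ref{prop:dog-dowg-deterministic} and the underlying bounds in \citep{ivgi23_dog_is_sgds_best_frien,khaled23_dowg_unleas}) by showing that the variance-estimation subroutine produces, with high probability, an estimate $\widehat{\sigma}$ that is a valid \emph{proxy for the almost-sure noise bound} $R$ up to the factor $\theta^{-1}$. Concretely, the key quantity is the minibatch gradient $\bar g_t = \frac1b\sum_{i=1}^b g_i(x_t)$, whose noise $\bar n_t$ has variance $\sigma^2/b$ and sub-gaussian norm $O(R/\sqrt b)$; the algorithm also forms the sample second moment $\widehat{\sigma}_t^2 = \frac1b\sum_i \sqn{g_i(x_t) - \bar g_t}$ (or a fresh-minibatch version). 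The first step is a concentration lemma — this is exactly what \Cref{lem:convergence-of-sample-variance} is invoked for — showing that uniformly over all $T/b$ iterations, $\widehat{\sigma}_t^2$ lies within a multiplicative $(1\pm c[\sqrt{\log(\cdot)/b} + \log(\cdot)/b])$ band of $\sigma^2$. The hypothesis on $b$ is precisely calibrated so that this relative error is at most $K_{\mathrm{snr}}^2 - \theta = \sigma^2/R^2 - \theta$, which after rearranging gives $\widehat{\sigma}_t \gtrsim \theta^{1/2} R$ — i.e. $R \lesssim \theta^{-1/2}\widehat{\sigma}_t$, and simultaneously $\widehat{\sigma}_t = O(\sigma)$. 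So the estimated noise level is both an upper bound on the true almost-sure bound (up to $\theta^{-1}$) and not much larger than the true variance.

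Second, I would feed this estimate into the T-DoG/T-DoWG machinery run on the sequence of \emph{minibatch} gradients $\bar g_t$. The clean way to organize this is to treat the run as SGD on $f$ with an effective per-step noise of sub-gaussian modulus $O(R/\sqrt b)$, taking $T_{\mathrm{total}}/b$ effective steps. The existing high-probability analyses of DoG/DoWG (with the extra $\log_+$ damping in the stepsize that makes T-DoG/T-DoWG tuning-free) give, for $L$-smooth $f$, a bound of the form $\iota\big(\frac{L D_\ast^2}{T/b} + \frac{(\text{noise level}) D_\ast}{\sqrt{T/b}}\big)$ and, for $G$-Lipschitz $f$, $\iota\frac{\sqrt{G^2 + (\text{noise level})^2}D_\ast}{\sqrt{T/b}}$, where the polylog $\iota$ depends on the ratio of the hint $\underline{D}$ (used to set $r_\epsilon$) to $D_\ast$ and on $\log(1/\delta)$. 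Substituting $T_{\mathrm{total}}/b$ for the step count and using the variance-estimation bound to replace the noise level by $\theta^{-1}R$ (for the additive term) while noting $\sqrt b$ appears from $\sqrt{T_{\mathrm{total}}/b}^{-1} = \sqrt b/\sqrt{T_{\mathrm{total}}}$, yields exactly the two displayed rates. A union bound over the concentration event and the DoG/DoWG high-probability event, each at level $\delta/2$, closes the argument.

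Third — and this is where I expect the real work to be — one has to verify that the DoG/DoWG convergence proof actually \emph{goes through with an input noise scale that is itself random and only known to be a high-probability upper bound}, rather than a fixed constant supplied in advance. The subtlety is that $\widehat{\sigma}_t$ is adapted to the trajectory, so the stepsize $\eta_t$ couples the noise estimate with the iterates, and the martingale arguments that control $\sum_t \eta_t \langle \bar n_t, x_t - x_\ast\rangle$ in the DoG analysis must be re-examined to ensure this coupling does not break the concentration. I would handle this by showing the estimate is essentially \emph{monotone/stable} enough — or by using a running maximum of the $\widehat{\sigma}_t$ so that the effective noise level is non-decreasing and thus predictable up to a one-step lag — so that the stopped-process and self-bounding arguments of \citep{ivgi23_dog_is_sgds_best_frien} apply verbatim on the good event. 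The remaining pieces (translating "$b$ large enough" into the stated inequality, tracking which hint ratios enter $\iota$, and confirming universality, i.e. that neither $L,G$ hints are used) are bookkeeping that mirrors the proofs of \Cref{thm:dog-dowg-tuning-free-bounded} and Proposition~\ref{prop:dog-dowg-deterministic}, and I would relegate them to the appendix.
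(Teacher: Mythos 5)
Your proposal follows essentially the same route as the paper: (i) a concentration bound on the sample variance (the paper's \Cref{lem:sample-variance-estimation}, built from \Cref{lem:convergence-of-sample-variance}) showing that the hypothesis on $b$ forces $\hat{\sigma}_t^2 \geq \theta R^2$, so that $\hat{\sigma}_t^2/\theta$ is a high-probability surrogate for the unknown almost-sure noise bound; (ii) feeding this surrogate into the T-DoG/T-DoWG normalization; and (iii) re-running the stopped-process and martingale arguments of \citet{ivgi23_dog_is_sgds_best_frien} on the good event, using a running maximum $\overline{\sigma}_t^2 = \max_{k\le t}\hat{\sigma}_k^2$ so the estimate is predictable --- which is exactly what the algorithm does and what the paper's modified Lemmas 8--10 handle (the paper declares those modifications straightforward and omits them; you correctly flag this as where the real work lies).

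One point of internal confusion worth fixing: you propose running the update on the \emph{minibatch-averaged} gradients $\bar g_t$ with effective noise $O(R/\sqrt b)$, but then substitute noise level $\theta^{-1}R$ (not $\theta^{-1}R/\sqrt b$) into the rate. In the paper's algorithms the minibatch is used \emph{only} for variance estimation; the descent step uses a single fresh stochastic gradient $g_t$ with noise level $R$ over $T_{\mathrm{total}}/b$ effective steps, which is precisely why the $\sqrt b$ penalty appears in the stated bound. Your arithmetic implicitly assumes the latter, so the conclusion is right, but the setup as written analyzes a different update rule (one that would in fact yield a better bound without the $\sqrt b$ factor on the noise term).
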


\textbf{Note on dimension dependence in \Cref{thm:dog-dowg-special-noise}.} We
note that the logarithmic dimension dependence on the dimension $d$ can be
removed if, rather than assuming the norm of the noise is subgaussian, we
assumed it was bounded.

On the surface, it looks like \Cref{thm:dog-dowg-special-noise} simply trades
off knowledge of the absolute bound on the noise $R$ with knowledge of some
constant $\theta$ that lies in the interval $[0, K_{\mathrm{snr}}]$. In order to see
how \Cref{thm:dog-dowg-special-noise} can be useful, consider the special cases
given in \Cref{prop:special-noise}. For these noise distributions, we see that
choosing a minibatch size $b \approx \mathcal{O} (\log \frac{2 d T}{\delta} + 1)$ suffices to ensure
\Cref{alg:dog-with-estimation,alg:dowg-with-estimation} converges with the
simple choice $\theta=\frac{1}{2}$. Even though we had no apriori knowledge of the variance
$\sigma^2$ and did not assume the noise distribution was stationary, we could still
optimize the function. In general, the minibatch size
$b \approx \mathcal{O} (\log \frac{2 d T}{\delta} + 1)$ suffices as long as $K_{\mathrm{snr}}$ is
bounded away from zero by some constant. The final cost of running the algorithm
is $T_{\mathrm{total}} = b \cdot T = \iota T$, where $\iota$ is some polylogarithmic factor.
Therefore, we only pay a logarithmic price for not knowing the distribution. Of
course, if $K_{\mathrm{snr}}$ is small enough, there can be no optimization--
there is not enough signal to do any estimation of the sub-gaussian modulus $R$.
The distribution used in \Cref{thm:impossibility-result-smooth} does force
$K_{\mathrm{snr}} \leq \frac{1}{T}$.

\section{Nonconvex Tuning-Free Optimization}
\label{sec:nonconvex-tuning-free}

In this section, we consider the case where the optimization
problem~\eqref{eq:opt-problem} is possibly nonconvex. Throughout the section, we
assume that $f$ is $L$-smooth and lower bounded by some $f_{\ast} \in \mathbb{R}$. In this
setting, SGD with a tuned stepsize achieves the following rate in expectation
\begin{align}
\label{eq:48}
\begin{split}
  \frac{1}{T} \sum_{t=0}^{T-1} \sqn{\nabla f(x_t)} \leq c \left[ \sqrt{\frac{L (f(x_0) - f_{\ast}) \sigma^2}{T}} + \frac{L (f(x_0) - f_{\ast})}{T}  \right],
\end{split}
\end{align}
for some absolute constant $c > 0$. This rate is known to be tight for
convergence in
expectation~\citep{arjevani19_lower_bound_non_convex_stoch_optim}. However, it
is not known if it is tight for returning a \emph{high probability} guarantee.
The best-known high-probability convergence rate for SGD is given by
\citep[Theorem 4.1]{liu23_high_probab_conver_stoch_gradien_method} and
guarantees with probability at least $1-\delta$ that
\begin{align}\label{eq:40}
\begin{split}
\frac{1}{T} \sum_{t=0}^{T-1} \sqn{\nabla f(x_t)} \leq 5 \sqrt{\frac{L (f(x_0) - f_{\ast}) R^2}{T}} + \frac{2 (f(x_0) - f_{\ast}) L}{T}  + \frac{12 R^2 \log \frac{1}{\delta}}{T}.
\end{split}
\end{align}

We now consider tuning-free algorithms that can match the performance of SGD
characterized by either~\cref{eq:48} or~\cref{eq:49}.
Per~\Cref{def:tuning-free-general}, an algorithm $\mathcal{B}$ is given (1) an
initialization $x_0$, (2) a budget of $T_{\mathrm{total}}$ stochastic gradient
accesses, and (3) hints
$\underline{L}, \overline{L}, \underline{R}, \overline{R}, \underline{\Delta}, \overline{\Delta}$
on the problem parameters such that (a) if $L$ is the smoothness constant of $f$
then $L \in [\underline{L}, \overline{L}]$, (b)
$R \in [\underline{R}, \overline{R}]$, and (c)
$\Delta \eqdef f(x_0) - f_{\ast} \in [\underline{\Delta}, \overline{\Delta}]$. We call $\mathcal{B}$
\textbf{strongly} tuning-free if it matches the performance of SGD characterized
by~\cref{eq:48} up to polylogarithmic factors. Alternatively, if it instead matches the
weaker guarantee given by~\cref{eq:40} then we call it \textbf{weakly}
tuning-free.

Our first result in this setting shows that we cannot hope to achieve the rate
given by~\cref{eq:48} in high probability, even given access to hints on all the
problem parameters.

\begin{theorem}\label{thm:impossibility-result-nonconvex}
For any polylogarithmic function $\iota: \mathbb{R}^4 \rightarrow \mathbb{R}$ and any
algorithm $\mathcal{A}$, there exists a time horizon $T$, valid hints
$\underline{L}, \overline{L}, \underline{\Delta}, \overline{\Delta}, \underline{\sigma}, \overline{\sigma}$,
an $L$-smooth and lower-bounded function $f$ and an oracle $\mathcal{O}(f, \sigma_f)$ such that the
algorithm $\mathcal{A}$ returns with some constant probability a point $x_{\mathrm{out}}$ satisfying
\begin{align*}
  \mathrm{Error}_{\mathcal{A}} &= \sqn{\nabla f(x_{\mathrm{out}})} > \iota \left( \frac{\overline{L}}{\underline{L}}, \frac{\overline{\Delta}}{\underline{\Delta}}, \frac{\overline{\sigma}}{\underline{\sigma}}, T \right) \cdot \left[ \sqrt{\frac{L \Delta \sigma^2}{T}} + \frac{L \Delta}{T}  \right],
\end{align*}
where $\Delta = f(x_0) - f_{\ast}$
\end{theorem}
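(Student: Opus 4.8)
The plan is to adapt the two-function confusion argument used in \Cref{thm:impossibility-result-smooth}, but tailored to the nonconvex stationarity measure and the expectation-optimal rate~\eqref{eq:48}. The key observation is that the bound~\eqref{eq:48} has the benign feature that its leading term $\sqrt{L \Delta \sigma^2 / T}$ scales \emph{linearly} in the noise level $\sigma$, so by making $\sigma$ tiny we can demand a very small error from a purported tuning-free algorithm, while simultaneously making the noise easy to hide. Concretely, I would construct two one-dimensional $L$-smooth functions $f$ and $h$ (for instance, $f$ essentially flat near $x_0$ with a shallow nonconvex dip, and $h$ a shifted/scaled version with the stationary point placed elsewhere) together with stochastic oracles $\mathcal{O}(f, \sigma_f)$ and $\mathcal{O}(h, \sigma_h)$ whose noise is bounded almost surely per \Cref{asm:almost-sure-bound}. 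The hints $\underline{L}, \overline{L}, \underline{\Delta}, \overline{\Delta}, \underline{\sigma}, \overline{\sigma}$ are chosen to be simultaneously valid for both problems, so the algorithm cannot use them to distinguish $f$ from $h$.

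The core coupling step is to design the two oracles so that, with some constant probability over the randomness of the oracle and the algorithm, the sequence of stochastic gradients observed along the algorithm's trajectory is \emph{identical} under $\mathcal{O}(f,\sigma_f)$ and $\mathcal{O}(h,\sigma_h)$ for the entire run of $T$ steps. I expect this to be arranged exactly as in the smooth convex lower bound: pick the gap between $f$ and $h$ small enough (controlled by a parameter that also sets $\sigma_f, \sigma_h$) that a single "bad" noise realization can mask the difference in true gradients, and show the probability of that realization persisting for all $T$ queries is at least some absolute constant (e.g.\ via a union bound / Le Cam-style two-point argument, as hinted by the comparison with \citep{mannor04_bandits}). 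On the event that the transcripts coincide, the output $x_{\mathrm{out}}$ has the same distribution under both problems. Since $f$ and $h$ are constructed so that any single point $x$ has a large squared-gradient under at least one of them — larger than $\iota(\cdot) \cdot [\sqrt{L\Delta\sigma^2/T} + L\Delta/T]$ with the tiny $\sigma$ we selected — the algorithm fails on at least one of the two problems with constant probability. Choosing $T$ (and the free scale parameters) appropriately to beat the given polylogarithmic function $\iota$ then yields the claim.

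The main obstacle, and the place where the nonconvex setting genuinely differs from \Cref{thm:impossibility-result-smooth}, is calibrating the construction so that the \emph{stationarity} gap $\sqn{\nabla f(x_{\mathrm{out}})}$ (rather than a function-value gap) is forced to be large at every candidate output point, while keeping both functions $L$-smooth with the \emph{same} $L$ and keeping $\Delta = f(x_0)-f_\ast$ inside the common hint interval. In the convex case one exploits that a quadratic has a unique global structure; here I would instead use functions whose gradient is bounded away from zero everywhere except in a small region that differs between $f$ and $h$, so that covering one region's stationary set leaves the other's gradient large. Balancing the size of this region against (i) the smoothness budget, (ii) the noise level $\sigma$ needed for the coupling, and (iii) the target error scaled by $\iota$ and $\sqrt{1/T}$ is the delicate part; once that balance is struck the probabilistic coupling and the final contradiction follow the same template as the earlier impossibility results. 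The full proof is deferred to the appendix.
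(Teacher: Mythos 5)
Your high-level template is the right one and matches the paper's: a two-point (Le Cam-style) confusion argument in one dimension, oracles that produce identical transcripts for all $T$ rounds with constant probability, hints valid for both problems, and a final contradiction from the output having to be simultaneously near two well-separated stationary points. However, as written the proposal has a genuine gap: the construction is never actually specified, and the two places where you locate the difficulty point in directions that would make your life harder than necessary. First, you assume you need genuinely nonconvex functions whose gradients are ``bounded away from zero everywhere except in a small region,'' and you flag calibrating such functions against the smoothness budget as the delicate open step. The paper avoids this entirely by reusing, verbatim, the convex quadratics $f(x)=\tfrac{L}{2}x^2$ and $h(x)=\tfrac{L}{2}(x-u)^2$ and their oracles from \Cref{thm:impossibility-result-smooth}: these are trivially $L$-smooth and lower bounded, and for a quadratic $\sqn{\nabla f(x)} = L^2 (x-x_\ast)^2$, so a small squared gradient norm pins $x_{\mathrm{out}}$ to the unique minimizer just as a small function-value gap did in the convex case. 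One only has to translate $\Delta = f(x_0)-f_\ast = \tfrac{L}{2}(x_0-x_\ast)^2$ into the hint interval $[\underline{\Delta},\overline{\Delta}]$ and rerun the same algebra (with $v=T^2$, $u=T^2+1$) to get incompatible upper and lower bounds on $x_{\mathrm{out}}$.

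Second, your key intuition --- ``by making $\sigma$ tiny we can demand a very small error \ldots while simultaneously making the noise easy to hide'' --- is backwards relative to how the coupling actually works. If the noise is too small the oracle effectively reveals the function and the deterministic positive results kick in; to mask the gradient gap $\abs{\nabla f(x)-\nabla h(x)} = Lu$ at every query, the noise bound for $h$ must be \emph{large}, of order $TLu$ (which is exactly why $\overline{\sigma} = \sigma + TLu$ appears in the hints). The constant-probability identical-transcript event does not come from a small noise level but from the rare-event structure of the oracle: the distinguishing component $\{h_1,\nabla h_1\}$ is returned only with probability $1/T$ per query, and $h_2 \equiv f_2$, so the transcripts agree for all $T$ rounds with probability $(1-1/T)^T \approx e^{-1}$. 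Without this structure (or an equivalent one) your coupling step does not go through, and the balancing act you defer to the appendix is precisely the part that needs to be done. I would recommend replacing your sketch with the observation that the convex construction already lies in the nonconvex function class and that the stationarity measure for quadratics reduces to squared distance to the minimizer, after which \Cref{thm:impossibility-result-smooth}'s argument transfers with only notational changes.
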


\begin{algorithm}[t]
\caption{Restarted SGD}
\label{alg:restarted-sgd}
\begin{algorithmic}[1]
\REQUIRE Initialization $y_0$, probability $\delta$, hints
$\underline{R}, \overline{R}, \underline{\Delta}, \overline{\Delta}, \underline{L}, \overline{L}$,
total budget $T_{\mathrm{total}}$
\STATE Set $\eta_{\epsilon} = \frac{1}{\overline{L}}$ and
\begin{align}
\label{eq:45}
N = 1 + \ceil*{\log \left(  \frac{\min( \overline{L}, \sqrt{\frac{5 T \overline{R}^2}{2 \underline{\Delta}}})}{\max (\underline{L}, \sqrt{\frac{5 T \underline{R}^2}{\overline{\Delta}}})}  \right)}
\end{align}
\IF{$T_{\mathrm{total}} < N$}
\STATE \textbf{Return} $y_0$.
\ENDIF
\STATE Set the per-epoch iteration budget as $T = \ceil{T_{\mathrm{total}} / N}$.
\FOR{$n = 1$ to $N$}
    \STATE $\eta = \eta_{\epsilon} 2^n$
    \STATE Run SGD for $T$ iterations with stepsize $\eta$ starting from $y_0$ to get outputs
    $x^n_1, \ldots, x^n_T$.
    \STATE Set $y_n, \hat{g}_n = \mathrm{FindLeader}(S, \delta, T)$ (see \Cref{alg:find-leader}).
\ENDFOR
\STATE \textbf{Return} $\bar{y} = \arg\min_{n \in [N]} \norm{\hat{g}_n}$.
\end{algorithmic}
\end{algorithm}

Surprisingly, our next theorem shows that the rate given by~\cref{eq:40}
\emph{is} achievable up to polylogarithmic factors given only access to hints.
To achieve this, we use a restarted variant of SGD (Algorithm~\ref{thm:restarted-sgd}) combined with a ``Leader
Finding'' procedure that selects a well-performing iterate by subsampling.

\begin{theorem}\label{thm:restarted-sgd}
(Convergence of Restarted SGD) Let $f$ be an $L$-smooth function lower bounded
by $f_{\ast}$ and suppose the stochastic gradient noise vectors satisfy~\Cref{asm:subg-noise}. Suppose that we are given the following
hints on the problem parameters: (a) $L \in [\underline{L}, \overline{L}]$, (b)
$R_f \in [\underline{R}, \overline{R}]$, and (c)
$\Delta_f \eqdef f(x_0) - f_{\ast} \in [\underline{\Delta}, \overline{\Delta}]$. Then there exists some absolute constant $c$ such
that the output of Algorithm~\ref{alg:restarted-sgd} satisfies after
$T_{\mathrm{total}} \cdot \log_+ \frac{1}{\delta}$ stochastic gradient evaluations
\begin{align*}
\begin{split}
\sqn{\nabla f(\overline{y})} \leq c \cdot \frac{R^2 \log \frac{2d \max(\log \frac{1}{\delta}, N)}{\delta}}{T_{\mathrm{total}}} + c \cdot N \cdot \left[ \sqrt{\frac{L (f(y_0) - f_{\ast}) R^2}{T_{\mathrm{total}}}} + \frac{(f(y_0) - f_{\ast}) L}{T_{\mathrm{total}}}  \right],
\end{split}
\end{align*}
where $c$ is an absolute constant, $N$ is a polylogarithmic function of the
hints defined in \cref{eq:45}, and $d$ is the problem dimensionality.
\end{theorem}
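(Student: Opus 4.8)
The plan is to analyze Algorithm~\ref{alg:restarted-sgd} by showing that (i) one of the $N$ epochs uses a stepsize $\eta$ that is within a constant factor of the optimal SGD stepsize, (ii) conditioned on that, the corresponding SGD run achieves the SGD rate of \cref{eq:40} (up to the per-epoch budget $T = \lceil T_{\mathrm{total}}/N\rceil$), and (iii) the $\mathrm{FindLeader}$ subroutine correctly identifies an iterate whose gradient norm is close to the best among the $T$ iterates of that epoch, and that the final $\arg\min$ over $n\in[N]$ does not lose more than a constant factor. First I would recall that the optimally-tuned constant stepsize for SGD on an $L$-smooth lower-bounded function with sub-gaussian noise modulus $R$ is $\eta^{\ast} \asymp \min\bigl(\frac{1}{L}, \sqrt{\frac{2\Delta_f}{5 T R^2}}\bigr)$, so that the bound \cref{eq:40} holds; the geometric grid $\eta_\epsilon 2^n = 2^n/\overline{L}$ together with the choice of $N$ in \cref{eq:45} is designed precisely so that the grid spans the interval $[\,1/\overline{L}\ \text{(or the smallest plausible }\eta^\ast),\ \min(\overline{L}^{-1}\cdot 2^N,\dots)\,]$ and hence some epoch $n^\ast$ has $\eta \in [\eta^\ast/2, \eta^\ast]$. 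Using the hint bounds $L\in[\underline L,\overline L]$, $R_f\in[\underline R,\overline R]$, $\Delta_f\in[\underline\Delta,\overline\Delta]$ one checks the true $\eta^\ast$ lies in the spanned range, so such an $n^\ast$ exists.

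Next I would handle the per-epoch guarantee. For epoch $n^\ast$, SGD run for $T$ steps from $y_0$ with a near-optimal constant stepsize satisfies, by \citep[Theorem 4.1]{liu23_high_probab_conver_stoch_gradien_method} adapted to sub-gaussian norm noise (this is where \Cref{asm:subg-noise} is used in place of almost-sure boundedness), with probability at least $1-\delta'$:
\begin{align*}
\frac{1}{T}\sum_{t=1}^{T}\sqn{\nabla f(x_t^{n^\ast})} \leq 5\sqrt{\frac{L(f(y_0)-f_\ast)R^2}{T}} + \frac{2(f(y_0)-f_\ast)L}{T} + \frac{12 R^2\log\frac{1}{\delta'}}{T}.
\end{align*}
Substituting $T = \lceil T_{\mathrm{total}}/N\rceil$ turns $1/T$ into $\asymp N/T_{\mathrm{total}}$ and $1/\sqrt{T}$ into $\asymp \sqrt{N/T_{\mathrm{total}}}\le N\sqrt{1/T_{\mathrm{total}}}$, producing the factor $N$ multiplying the bracket in the claimed bound. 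Since the average square-gradient-norm is bounded, at least a constant fraction of the $T$ iterates of epoch $n^\ast$ have $\sqn{\nabla f(\cdot)}$ at most a constant times that average; this is the pool from which $\mathrm{FindLeader}$ must extract a good point.

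The main obstacle — and the part deserving the most care — is the $\mathrm{FindLeader}$ / leader-selection argument (Algorithm~\ref{alg:find-leader}), because the algorithm never observes $\norm{\nabla f(\cdot)}$ directly, only minibatch gradient estimates. I would argue that $\mathrm{FindLeader}$ subsamples $O(\log\frac{1}{\delta'})$ iterates, forms averaged gradient estimates $\hat g_n$ over fresh minibatches, and that by a sub-gaussian vector concentration / union bound (this is the source of the $\log\frac{2 d \max(\log\frac1\delta, N)}{\delta}$ factor and the standalone additive term $\frac{R^2\log(\cdot)}{T_{\mathrm{total}}}$), each $\hat g_n$ satisfies $\bigl|\,\sqn{\hat g_n} - \sqn{\nabla f(y_n)}\,\bigr| \lesssim \frac{R^2\log(\cdot)}{T_{\mathrm{total}}}$; hence selecting $\arg\min_n\norm{\hat g_n}$ returns a $\overline y$ with $\sqn{\nabla f(\overline y)}$ no worse than the best sampled iterate's true squared gradient norm plus that additive estimation error. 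Combining: with constant probability one of the $O(\log\frac1\delta)$ subsamples from epoch $n^\ast$ lands in the good-iterate pool, so after a union bound over the $N$ epochs (setting $\delta' \asymp \delta/N$, which is where $\max(\log\frac1\delta,N)$ enters) and tracking the total budget $T_{\mathrm{total}}\cdot\log_+\frac1\delta$ spent on subsampling, we obtain the stated bound. The remaining steps — bounding $f(y_n)-f_\ast \le f(y_0)-f_\ast$ across restarts (each epoch restarts from the same $y_0$, so this is immediate), verifying the early-return case $T_{\mathrm{total}}<N$ is consistent with the bound (trivial, since then $T_{\mathrm{total}}$ is polylog and the RHS is large), and collecting constants — are routine.
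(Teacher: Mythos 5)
Your proposal follows essentially the same route as the paper's proof: a geometric stepsize grid whose length $N$ guarantees some epoch's stepsize lands in $[\eta_\ast/2,\eta_\ast]$, the high-probability SGD bound of \citet{liu23_high_probab_conver_stoch_gradien_method} applied to that epoch with $T=\lceil T_{\mathrm{total}}/N\rceil$ (the source of the factor $N$), a Markov/subsampling argument showing $O(\log\frac1\delta)$ draws hit a good iterate (the paper packages this as \Cref{lem:subsampling-minimum}), sub-gaussian concentration plus union bounds for the gradient estimates, and a final $\arg\min$ over epochs. The only slight imprecision is your claim $\abs{\sqn{\hat g_n}-\sqn{\nabla f(y_n)}}\lesssim R^2\log(\cdot)/T_{\mathrm{total}}$, which ignores the cross term; the paper instead bounds $\sqn{\hat g_n-\nabla f(y_n)}$ and uses $\sqn{a}\le 2\sqn{a-b}+2\sqn{b}$, losing only constant factors — a cosmetic difference.
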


\textbf{Discussion of Theorem~\ref{thm:restarted-sgd}}. This theorem shows that
in the nonconvex setting, we pay only an additional polylogarithmic factor to
achieve the same high-probability rate as when we know all parameters. We
emphasize that we do not know if the rate given by~\cref{eq:40} is tight, but it
is the best in the literature. Finally, the logarithmic dimension dependence on
the dimension $d$ can be removed if, rather than assuming the norm of the noise
is subgaussian, we assumed that it was bounded almost surely.

\textbf{Proof Idea.} The proof is an application of the so-called ``doubling
trick'' with a careful comparison procedure. If we start with a small enough stepsize,
we only need to double a logarithmic number of times until we find a stepsize
$\eta^{\prime}$ such that $\frac{\eta_{\ast}}{2} \leq \eta^{\prime} \leq \eta_{\ast}$, where $\eta_{\ast}$ is the optimal
stepsize for SGD on this problem. We therefore run SGD for $N$ epochs with a
carefully chosen $N$, each time doubling the stepsize. At the end of every SGD
run, we run the $\mathrm{FindLeader}$ procedure (\Cref{alg:find-leader}) to get
with high probability a point $y_n$ such that
\begin{align*}
  \sqn{\nabla f(y_n)} \leq \frac{1}{T} \sum_{t=0}^{T-1} \sqn{\nabla f(x_t^n)},
\end{align*}
where $x_1^n, \ldots, x_T^n$ are the SGD iterates from the $n$-th epoch. Finally, we
know that at least one of these $N$ points $y_1, \ldots, y_N$ has small gradient
norm, so we return the point with the minimal estimated gradient norm and bound
the estimation error as a function of $T$. The total number of gradient accesses
performed is at most $N (T + M T) \approx T_{\mathrm{total}} \cdot \log_+ \frac{1}{\delta}.$
Therefore, both the restarting and comparison procedures add at most a
logarithmic number of gradient accesses.

\textbf{Related work.} Many papers give high probability bounds for SGD or
AdaGrad and their variants in the nonconvex
setting~\citep{ghadimi2013stochastic,madden20_high_probab_conver_bound_non,Lei2021,li18_conver_stoch_gradien_descen_with_adapt_steps,li20_high_probab_analy_adapt_sgd_with_momen,faw22_power_adapt_sgd,kavis22_high_probab_bound_class_noncon},
but to the best of our knowledge none give a tuning-free algorithm matching SGD
per~\Cref{def:tuning-free-general}. The FindLeader procedure is essentially
extracted from \citep[Theorem 13]{madden20_high_probab_conver_bound_non}, and is
similar to the post-processing step in \citep{ghadimi2013stochastic}.

\textbf{Comparison with the convex setting.} The rate achieved by \Cref{thm:restarted-sgd} stands
in contrast to the best-known rates in the convex setting, where we suffer from
a polynomial dependence on the hints, as in~\cref{eq:50}. One potential reason for this
divergence is the difficulty of telling apart good and bad points. In the convex
setting, we ask for a point $\overline{y}$ with a small function
value $f(\overline{y})$. And while the oracle gives us access to stochastic
realizations of $f(\overline{y})$, the error in those realization is
\emph{not} controlled. Instead, to compare between two points $y_1$ and $y_2$ we
have to rely on stochastic gradient information to approximate
$f(y_1) - f(y_2)$, and this seems to be too difficult without apriori control on
the distance between $y_1$ and $y_2$. On the other hand, in the nonconvex
setting,
such comparison is feasible through sampling methods like e.g. Algorithm~\ref{alg:find-leader}.

\section{Conclusion and Open Problems}

We have reached the end of our investigation. To summarize: we defined
tuning-free algorithms and studied several settings where tuning-free
optimization was possible, and several where we proved impossibility results.
Yet, many open questions remain. For example, tuning-free optimization might be
possible in the finite-sum setting where we can periodically evaluate the
function value exactly. The upper bounds we develop in both the convex and
nonconvex settings require quite stringent assumptions on the noise (such as
boundedness or sub-gaussian norm), and it is not known if they can be relaxed to
expected smoothness~\citep{gower19_sgd,khaled20_better_theor_sgd_noncon_world}
or some variant of it. We leave these questions to future work.

\section*{Acknowledgements}

We thank Aaron Defazio, Yair Carmon, and Oliver Hinder for discussions during the preparation of this work.

\bibliography{reflib}

\begin{thebibliography}{59}
\providecommand{\natexlab}[1]{#1}
\providecommand{\url}[1]{\texttt{#1}}
\expandafter\ifx\csname urlstyle\endcsname\relax
  \providecommand{\doi}[1]{doi: #1}\else
  \providecommand{\doi}{doi: \begingroup \urlstyle{rm}\Url}\fi

\bibitem[Arjevani et~al.(2019)Arjevani, Carmon, Duchi, Foster, Srebro, and
  Woodworth]{arjevani19_lower_bound_non_convex_stoch_optim}
Yossi Arjevani, Yair Carmon, John~C. Duchi, Dylan~J. Foster, Nathan Srebro, and
  Blake Woodworth.
\newblock Lower bounds for non-convex stochastic optimization.
\newblock \emph{arXiv preprint arXiv:1912.02365}, abs/1912.02365, 2019.
\newblock URL \url{https://arXiv.org/abs/1912.02365}.

\bibitem[Attia and Koren(2023)]{attia23_sgd_with_adagr_steps}
Amit Attia and Tomer Koren.
\newblock {SGD} with {AdaGrad} stepsizes: Full adaptivity with high probability
  to unknown parameters, unbounded gradients and affine variance.
\newblock \emph{arXiv preprint arXiv:2302.08783}, abs/2302.08783, 2023.
\newblock URL \url{https://arXiv.org/abs/2302.08783}.

\bibitem[Attia and Koren(2024)]{attia24_how_free_is_param_free_stoch_optim}
Amit Attia and Tomer Koren.
\newblock How free is parameter-free stochastic optimization?
\newblock \emph{arXiv preprint arXiv:2402.03126}, abs/2402.03126, 2024.
\newblock URL \url{https://arXiv.org/abs/2402.03126}.

\bibitem[Baydin et~al.(2018)Baydin, Cornish, Martínez{-}Rubio, Schmidt, and
  Wood]{baydin17_onlin_learn_rate_adapt_with_hyper_descen}
Atilim~Gunes Baydin, Robert Cornish, David Martínez{-}Rubio, Mark Schmidt, and
  Frank Wood.
\newblock Online learning rate adaptation with hypergradient descent.
\newblock In \emph{6th International Conference on Learning Representations,
  {ICLR} 2018, Vancouver, BC, Canada, April 30 - May 3, 2018, Conference Track
  Proceedings}. OpenReview.net, 2018.
\newblock URL \url{https://openreview.net/forum?id=BkrsAzWAb}.

\bibitem[Beck(2017)]{beck17_first_order_methods_opt}
Amir Beck.
\newblock \emph{First-order methods in optimization}.
\newblock {MOS}-{SIAM} series on optimization. Society for Industrial and
  Applied Mathematics ; Mathematical Optimization Society, 2017.
\newblock ISBN 9781611974997.

\bibitem[Bischl et~al.(2023)Bischl, Binder, Lang, Pielok, Richter, Coors,
  Thomas, Ullmann, Becker, Boulesteix, Deng, and Lindauer]{Bischl2023}
Bernd Bischl, Martin Binder, Michel Lang, Tobias Pielok, Jakob Richter, Stefan
  Coors, Janek Thomas, Theresa Ullmann, Marc Becker, Anne‐Laure Boulesteix,
  Difan Deng, and Marius Lindauer.
\newblock Hyperparameter optimization: Foundations, algorithms, best practices,
  and open challenges.
\newblock \emph{WIREs Data Mining and Knowledge Discovery}, 13\penalty0 (2),
  January 2023.
\newblock ISSN 1942-4795.
\newblock \doi{10.1002/widm.1484}.
\newblock URL \url{http://dx.doi.org/10.1002/widm.1484}.

\bibitem[Black et~al.(2022)Black, Biderman, Hallahan, Anthony, Gao, Golding,
  He, Leahy, McDonell, Phang, Pieler, Prashanth, Purohit, Reynolds, Tow, Wang,
  and Weinbach]{black22_gpt_neox}
Sid Black, Stella Biderman, Eric Hallahan, Quentin Anthony, Leo Gao, Laurence
  Golding, Horace He, Connor Leahy, Kyle McDonell, Jason Phang, Michael Pieler,
  USVSN~Sai Prashanth, Shivanshu Purohit, Laria Reynolds, Jonathan Tow, Ben
  Wang, and Samuel Weinbach.
\newblock {GPT-NeoX-20B}: an open-source autoregressive language model.
\newblock \emph{arXiv preprint arXiv:2204.06745}, abs/2204.06745, 2022.
\newblock URL \url{https://arXiv.org/abs/2204.06745}.

\bibitem[Bottou et~al.(2018)Bottou, Curtis, and
  Nocedal]{bottou16_optim_method_large_scale_machin_learn}
Léon Bottou, Frank~E. Curtis, and Jorge Nocedal.
\newblock Optimization methods for large-scale machine learning.
\newblock \emph{SIAM Review}, 60\penalty0 (2):\penalty0 223--311, 2018.
\newblock \doi{10.1137/16M1080173}.
\newblock URL \url{https://doi.org/10.1137/16M1080173}.

\bibitem[Brown et~al.(2020)Brown, Mann, Ryder, Subbiah, Kaplan, Dhariwal,
  Neelakantan, Shyam, Sastry, Askell, Agarwal, Herbert{-}Voss, Krueger,
  Henighan, Child, Ramesh, Ziegler, Wu, Winter, Hesse, Chen, Sigler, Litwin,
  Gray, Chess, Clark, Berner, McCandlish, Radford, Sutskever, and
  Amodei]{brown20_languag_model_are_few_shot_learn}
Tom~B. Brown, Benjamin Mann, Nick Ryder, Melanie Subbiah, Jared Kaplan,
  Prafulla Dhariwal, Arvind Neelakantan, Pranav Shyam, Girish Sastry, Amanda
  Askell, Sandhini Agarwal, Ariel Herbert{-}Voss, Gretchen Krueger, Tom
  Henighan, Rewon Child, Aditya Ramesh, Daniel~M. Ziegler, Jeffrey Wu, Clemens
  Winter, Christopher Hesse, Mark Chen, Eric Sigler, Mateusz Litwin, Scott
  Gray, Benjamin Chess, Jack Clark, Christopher Berner, Sam McCandlish, Alec
  Radford, Ilya Sutskever, and Dario Amodei.
\newblock Language models are few-shot learners.
\newblock In Hugo Larochelle, Marc'Aurelio Ranzato, Raia Hadsell,
  Maria{-}Florina Balcan, and Hsuan{-}Tien Lin, editors, \emph{Advances in
  Neural Information Processing Systems 33: Annual Conference on Neural
  Information Processing Systems 2020, NeurIPS 2020, December 6-12, 2020,
  virtual}, 2020.
\newblock URL
  \url{https://proceedings.neurips.cc/paper/2020/hash/1457c0d6bfcb4967418bfb8ac142f64a-Abstract.html}.

\bibitem[Carmon and Hinder(2022)]{carmon22_makin_sgd_param_free}
Yair Carmon and Oliver Hinder.
\newblock Making {SGD} parameter-free.
\newblock In Po{-}Ling Loh and Maxim Raginsky, editors, \emph{Conference on
  Learning Theory, 2-5 July 2022, London, {UK}}, volume 178 of
  \emph{Proceedings of Machine Learning Research}, pages 2360--2389. PMLR,
  2022.
\newblock URL \url{https://proceedings.mlr.press/v178/carmon22a.html}.

\bibitem[Carmon and Hinder(2024)]{carmon24_price_adapt_stoch_convex_optim}
Yair Carmon and Oliver Hinder.
\newblock The price of adaptivity in stochastic convex optimization.
\newblock \emph{arXiv preprint arXiv:2402.10898}, abs/2402.10898, 2024.
\newblock URL \url{https://arXiv.org/abs/2402.10898}.

\bibitem[Carmon et~al.(2017)Carmon, Duchi, Hinder, and
  Sidford]{carmon17_lower_bound_findin_station_point_ii}
Yair Carmon, John~C. Duchi, Oliver Hinder, and Aaron Sidford.
\newblock Lower bounds for finding stationary points {II}: First-order methods.
\newblock \emph{arXiv preprint arXiv:1711.00841}, abs/1711.00841, 2017.
\newblock URL \url{https://arXiv.org/abs/1711.00841}.

\bibitem[Chen and Hazan(2023)]{chen23_nonst_contr_approac_to_optim}
Xinyi Chen and Elad Hazan.
\newblock A nonstochastic control approach to optimization.
\newblock \emph{arXiv preprint arXiv:2301.07902}, abs/2301.07902, 2023.
\newblock URL \url{https://arXiv.org/abs/2301.07902}.

\bibitem[Cutkosky(2019)]{cutkosky19_artif_const_lipsc_hints_uncon_onlin_learn}
Ashok Cutkosky.
\newblock Artificial constraints and {Lipschitz} hints for unconstrained online
  learning.
\newblock \emph{arXiv preprint arXiv:1902.09013}, abs/1902.09013, 2019.
\newblock URL \url{https://arXiv.org/abs/1902.09013}.

\bibitem[Cutkosky and
  Boahen(2016)]{cutkosky17_onlin_convex_optim_with_uncon_domain_losses}
Ashok Cutkosky and Kwabena~A. Boahen.
\newblock Online convex optimization with unconstrained domains and losses.
\newblock In Daniel~D. Lee, Masashi Sugiyama, Ulrike von Luxburg, Isabelle
  Guyon, and Roman Garnett, editors, \emph{Advances in Neural Information
  Processing Systems 29: Annual Conference on Neural Information Processing
  Systems 2016, December 5-10, 2016, Barcelona, Spain}, pages 748--756, 2016.
\newblock URL
  \url{https://proceedings.neurips.cc/paper/2016/hash/550a141f12de6341fba65b0ad0433500-Abstract.html}.

\bibitem[Cutkosky and Boahen(2017{\natexlab{a}})]{cutkosky17_bounded_no_params}
Ashok Cutkosky and Kwabena~A. Boahen.
\newblock Stochastic and adversarial online learning without hyperparameters.
\newblock In Isabelle Guyon, Ulrike von Luxburg, Samy Bengio, Hanna~M. Wallach,
  Rob Fergus, S.~V.~N. Vishwanathan, and Roman Garnett, editors, \emph{Advances
  in Neural Information Processing Systems 30: Annual Conference on Neural
  Information Processing Systems 2017, December 4-9, 2017, Long Beach, CA,
  {USA}}, pages 5059--5067, 2017{\natexlab{a}}.
\newblock URL
  \url{https://proceedings.neurips.cc/paper/2017/hash/6aed000af86a084f9cb0264161e29dd3-Abstract.html}.

\bibitem[Cutkosky and
  Boahen(2017{\natexlab{b}})]{cutkosky17_onlin_learn_without_prior_infor}
Ashok Cutkosky and Kwabena~A. Boahen.
\newblock Online learning without prior information.
\newblock In Satyen Kale and Ohad Shamir, editors, \emph{Proceedings of the
  30th Conference on Learning Theory, {COLT} 2017, Amsterdam, The Netherlands,
  7-10 July 2017}, volume~65 of \emph{Proceedings of Machine Learning
  Research}, pages 643--677. PMLR, 2017{\natexlab{b}}.
\newblock URL \url{http://proceedings.mlr.press/v65/cutkosky17a.html}.

\bibitem[Cutkosky et~al.(2023)Cutkosky, Defazio, and Mehta]{cutkosky23_mechan}
Ashok Cutkosky, Aaron Defazio, and Harsh Mehta.
\newblock Mechanic: a learning rate tuner.
\newblock \emph{arXiv preprint arXiv:2306.00144}, abs/2306.00144, 2023.
\newblock URL \url{https://arXiv.org/abs/2306.00144}.

\bibitem[Defazio and
  Mishchenko(2023)]{defazio23_learn_rate_free_learn_by_d_adapt}
Aaron Defazio and Konstantin Mishchenko.
\newblock Learning-rate-free learning by {D}-adaptation.
\newblock \emph{arXiv preprint arXiv:2301.07733}, abs/2301.07733, 2023.
\newblock URL \url{https://arXiv.org/abs/2301.07733}.

\bibitem[Duchi et~al.(2010)Duchi, Hazan, and Singer]{duchi11_adagrad}
John~C. Duchi, Elad Hazan, and Yoram Singer.
\newblock Adaptive subgradient methods for online learning and stochastic
  optimization.
\newblock In Adam~Tauman Kalai and Mehryar Mohri, editors, \emph{{COLT} 2010 -
  The 23rd Conference on Learning Theory, Haifa, Israel, June 27-29, 2010},
  pages 257--269. Omnipress, 2010.
\newblock URL
  \url{http://colt2010.haifa.il.ibm.com/papers/COLT2010proceedings.pdf\#page=265}.

\bibitem[Faw et~al.(2022)Faw, Tziotis, Caramanis, Mokhtari, Shakkottai, and
  Ward]{faw22_power_adapt_sgd}
Matthew Faw, Isidoros Tziotis, Constantine Caramanis, Aryan Mokhtari, Sanjay
  Shakkottai, and Rachel Ward.
\newblock The power of adaptivity in {SGD:} self-tuning step sizes with
  unbounded gradients and affine variance.
\newblock In Po{-}Ling Loh and Maxim Raginsky, editors, \emph{Conference on
  Learning Theory, 2-5 July 2022, London, {UK}}, volume 178 of
  \emph{Proceedings of Machine Learning Research}, pages 313--355. PMLR, 2022.
\newblock URL \url{https://proceedings.mlr.press/v178/faw22a.html}.

\bibitem[Ghadimi and Lan(2013)]{ghadimi2013stochastic}
Saeed Ghadimi and Guanghui Lan.
\newblock Stochastic first- and zeroth-order methods for nonconvex stochastic
  programming.
\newblock \emph{SIAM Journal on Optimization}, 23\penalty0 (4):\penalty0
  2341--2368, 2013.
\newblock \doi{10.1137/120880811}.
\newblock URL \url{https://doi.org/10.1137/120880811}.

\bibitem[Gower et~al.(2019)Gower, Loizou, Qian, Sailanbayev, Shulgin, and
  Richtárik]{gower19_sgd}
Robert~Mansel Gower, Nicolas Loizou, Xun Qian, Alibek Sailanbayev, Egor
  Shulgin, and Peter Richtárik.
\newblock {SGD}: General analysis and improved rates.
\newblock In Kamalika Chaudhuri and Ruslan Salakhutdinov, editors,
  \emph{Proceedings of the 36th International Conference on Machine Learning},
  volume~97 of \emph{Proceedings of Machine Learning Research}, pages
  5200--5209. PMLR, 2019.
\newblock URL \url{http://proceedings.mlr.press/v97/qian19b.html}.

\bibitem[Hazan(2022)]{hazan22_ocobook}
Elad Hazan.
\newblock \emph{Introduction to Online Convex Optimization}.
\newblock Book draft, MIT University Press, 2022.

\bibitem[Hazan and Kakade(2019)]{hazan19_revis_polyak_step_size}
Elad Hazan and Sham Kakade.
\newblock Revisiting the {Polyak} step size.
\newblock \emph{arXiv preprint arXiv:1905.00313}, abs/1905.00313, 2019.
\newblock URL \url{https://arXiv.org/abs/1905.00313}.

\bibitem[Ivgi et~al.(2023)Ivgi, Hinder, and
  Carmon]{ivgi23_dog_is_sgds_best_frien}
Maor Ivgi, Oliver Hinder, and Yair Carmon.
\newblock {DoG} is {SGD}'s best friend: a parameter-free dynamic step size
  schedule.
\newblock \emph{arXiv preprint arXiv:2302.12022}, abs/2302.12022, 2023.
\newblock URL \url{https://arXiv.org/abs/2302.12022}.

\bibitem[Jain et~al.(2019)Jain, Nagaraj, and
  Netrapalli]{jain19_makin_last_iterat_sgd_infor_theor_optim}
Prateek Jain, Dheeraj Nagaraj, and Praneeth Netrapalli.
\newblock Making the last iterate of {SGD} information theoretically optimal.
\newblock In Alina Beygelzimer and Daniel Hsu, editors, \emph{Conference on
  Learning Theory, {COLT} 2019, 25-28 June 2019, Phoenix, AZ, {USA}}, volume~99
  of \emph{Proceedings of Machine Learning Research}, pages 1752--1755. PMLR,
  2019.
\newblock URL \url{http://proceedings.mlr.press/v99/jain19a.html}.

\bibitem[Jin et~al.(2019)Jin, Netrapalli, Ge, Kakade, and
  Jordan]{jin19_short_note_concen_inequal_random}
Chi Jin, Praneeth Netrapalli, Rong Ge, Sham~M. Kakade, and Michael~I. Jordan.
\newblock A short note on concentration inequalities for random vectors with
  subgaussian norm.
\newblock \emph{arXiv preprint arXiv:1902.03736}, abs/1902.03736, 2019.
\newblock URL \url{https://arXiv.org/abs/1902.03736}.

\bibitem[Kavis et~al.(2019)Kavis, Levy, Bach, and Cevher]{kavis19_unixg}
Ali Kavis, Kfir~Y. Levy, Francis~R. Bach, and Volkan Cevher.
\newblock Unixgrad: {A} universal, adaptive algorithm with optimal guarantees
  for constrained optimization.
\newblock In Hanna~M. Wallach, Hugo Larochelle, Alina Beygelzimer, Florence
  d'Alché{-}Buc, Emily~B. Fox, and Roman Garnett, editors, \emph{Advances in
  Neural Information Processing Systems 32: Annual Conference on Neural
  Information Processing Systems 2019, NeurIPS 2019, December 8-14, 2019,
  Vancouver, BC, Canada}, pages 6257--6266, 2019.
\newblock URL
  \url{https://proceedings.neurips.cc/paper/2019/hash/88855547570f7ff053fff7c54e5148cc-Abstract.html}.

\bibitem[Kavis et~al.(2022)Kavis, Levy, and
  Cevher]{kavis22_high_probab_bound_class_noncon}
Ali Kavis, Kfir~Yehuda Levy, and Volkan Cevher.
\newblock High probability bounds for a class of nonconvex algorithms with
  adagrad stepsize.
\newblock In \emph{{ICLR}}. OpenReview.net, 2022.

\bibitem[Khaled and Richtárik(2020)]{khaled20_better_theor_sgd_noncon_world}
Ahmed Khaled and Peter Richtárik.
\newblock Better theory for {SGD} in the nonconvex world.
\newblock \emph{arXiv preprint arXiv:2002.03329}, abs/2002.03329, 2020.
\newblock URL \url{https://arXiv.org/abs/2002.03329}.

\bibitem[Khaled et~al.(2023)Khaled, Mishchenko, and Jin]{khaled23_dowg_unleas}
Ahmed Khaled, Konstantin Mishchenko, and Chi Jin.
\newblock {DoWG} unleashed: an efficient universal parameter-free gradient
  descent method.
\newblock \emph{arXiv preprint arXiv:2305.16284}, abs/2305.16284, 2023.
\newblock URL \url{https://arXiv.org/abs/2305.16284}.

\bibitem[Kingma and Ba(2015)]{kingma14_adam}
Diederik~P. Kingma and Jimmy Ba.
\newblock Adam: {A} method for stochastic optimization.
\newblock In Yoshua Bengio and Yann LeCun, editors, \emph{3rd International
  Conference on Learning Representations, {ICLR} 2015, San Diego, CA, USA, May
  7-9, 2015, Conference Track Proceedings}, 2015.
\newblock URL \url{http://arXiv.org/abs/1412.6980}.

\bibitem[Lan et~al.(2023)Lan, Ouyang, and
  Zhang]{lan23_optim_param_free_gradien_minim}
Guanghui Lan, Yuyuan Ouyang, and Zhe Zhang.
\newblock Optimal and parameter-free gradient minimization methods for smooth
  optimization.
\newblock \emph{arXiv preprint arXiv:2310.12139}, abs/2310.12139, 2023.
\newblock URL \url{https://arXiv.org/abs/2310.12139}.

\bibitem[Lei and Tang(2021)]{Lei2021}
Yunwen Lei and Ke~Tang.
\newblock Learning rates for stochastic gradient descent with nonconvex
  objectives.
\newblock \emph{IEEE Transactions on Pattern Analysis and Machine
  Intelligence}, 43\penalty0 (12):\penalty0 4505–4511, December 2021.
\newblock ISSN 1939-3539.
\newblock \doi{10.1109/tpami.2021.3068154}.
\newblock URL \url{http://dx.doi.org/10.1109/TPAMI.2021.3068154}.

\bibitem[Levy(2017)]{levy17_onlin_to_offlin_conver_univer}
Kfir~Y. Levy.
\newblock Online to offline conversions, universality and adaptive minibatch
  sizes.
\newblock In Isabelle Guyon, Ulrike von Luxburg, Samy Bengio, Hanna~M. Wallach,
  Rob Fergus, S.~V.~N. Vishwanathan, and Roman Garnett, editors, \emph{Advances
  in Neural Information Processing Systems 30: Annual Conference on Neural
  Information Processing Systems 2017, December 4-9, 2017, Long Beach, CA,
  {USA}}, pages 1613--1622, 2017.
\newblock URL
  \url{https://proceedings.neurips.cc/paper/2017/hash/ce5140df15d046a66883807d18d0264b-Abstract.html}.

\bibitem[Levy et~al.(2018)Levy, Yurtsever, and
  Cevher]{levy18_onlin_adapt_method_univer_accel}
Kfir~Yehuda Levy, Alp Yurtsever, and Volkan Cevher.
\newblock Online adaptive methods, universality and acceleration.
\newblock In Samy Bengio, Hanna~M. Wallach, Hugo Larochelle, Kristen Grauman,
  Nicolò Cesa{-}Bianchi, and Roman Garnett, editors, \emph{Advances in Neural
  Information Processing Systems 31: Annual Conference on Neural Information
  Processing Systems 2018, NeurIPS 2018, December 3-8, 2018, Montréal,
  Canada}, pages 6501--6510, 2018.
\newblock URL
  \url{https://proceedings.neurips.cc/paper/2018/hash/b0169350cd35566c47ba83c6ec1d6f82-Abstract.html}.

\bibitem[Li and
  Orabona(2019)]{li18_conver_stoch_gradien_descen_with_adapt_steps}
Xiaoyu Li and Francesco Orabona.
\newblock On the convergence of stochastic gradient descent with adaptive
  stepsizes.
\newblock In Kamalika Chaudhuri and Masashi Sugiyama, editors, \emph{The 22nd
  International Conference on Artificial Intelligence and Statistics, {AISTATS}
  2019, 16-18 April 2019, Naha, Okinawa, Japan}, volume~89 of \emph{Proceedings
  of Machine Learning Research}, pages 983--992. PMLR, 2019.
\newblock URL \url{http://proceedings.mlr.press/v89/li19c.html}.

\bibitem[Li and Orabona(2020)]{li20_high_probab_analy_adapt_sgd_with_momen}
Xiaoyu Li and Francesco Orabona.
\newblock A high probability analysis of adaptive {SGD} with momentum.
\newblock \emph{arXiv preprint arXiv:2007.14294}, abs/2007.14294, 2020.
\newblock URL \url{https://arXiv.org/abs/2007.14294}.

\bibitem[Liu et~al.(2023)Liu, Nguyen, Nguyen, Ene, and
  Nguyen]{liu23_high_probab_conver_stoch_gradien_method}
Zijian Liu, Ta~Duy Nguyen, Thien~Hang Nguyen, Alina Ene, and Huy~Lê Nguyen.
\newblock High probability convergence of stochastic gradient methods.
\newblock \emph{arXiv preprint arXiv:2302.14843}, abs/2302.14843, 2023.
\newblock URL \url{https://arXiv.org/abs/2302.14843}.

\bibitem[Loshchilov and Hutter(2019)]{loshchilov17_decoup_weigh_decay_regul}
Ilya Loshchilov and Frank Hutter.
\newblock Decoupled weight decay regularization.
\newblock In \emph{7th International Conference on Learning Representations,
  {ICLR} 2019, New Orleans, LA, USA, May 6-9, 2019}. OpenReview.net, 2019.
\newblock URL \url{https://openreview.net/forum?id=Bkg6RiCqY7}.

\bibitem[Madden et~al.(2020)Madden, Dall'Anese, and
  Becker]{madden20_high_probab_conver_bound_non}
Liam Madden, Emiliano Dall'Anese, and Stephen Becker.
\newblock High-probability convergence bounds for non-convex stochastic
  gradient descent.
\newblock \emph{arXiv preprint arXiv:2006.05610}, abs/2006.05610, 2020.
\newblock URL \url{https://arXiv.org/abs/2006.05610}.

\bibitem[Malitsky and
  Mishchenko(2020)]{malitsky19_adapt_gradien_descen_without_descen}
Yura Malitsky and Konstantin Mishchenko.
\newblock Adaptive gradient descent without descent.
\newblock In \emph{Proceedings of the 37th International Conference on Machine
  Learning, {ICML} 2020, 13-18 July 2020, Virtual Event}, volume 119 of
  \emph{Proceedings of Machine Learning Research}, pages 6702--6712. PMLR,
  2020.
\newblock URL \url{http://proceedings.mlr.press/v119/malitsky20a.html}.

\bibitem[Mannor and Tsitsiklis(2004)]{mannor04_bandits}
Shie Mannor and John~N. Tsitsiklis.
\newblock The sample complexity of exploration in the multi-armed bandit
  problem.
\newblock \emph{J. Mach. Learn. Res.}, 5:\penalty0 623--648, 2004.
\newblock ISSN 1532-4435.

\bibitem[Mhammedi and
  Koolen(2020)]{mhammedi20_lipsc_compar_norm_adapt_onlin_learn}
Zakaria Mhammedi and Wouter~M. Koolen.
\newblock {Lipschitz} and comparator-norm adaptivity in online learning.
\newblock In Jacob~D. Abernethy and Shivani Agarwal, editors, \emph{Conference
  on Learning Theory, {COLT} 2020, 9-12 July 2020, Virtual Event [Graz,
  Austria]}, volume 125 of \emph{Proceedings of Machine Learning Research},
  pages 2858--2887. PMLR, 2020.
\newblock URL \url{http://proceedings.mlr.press/v125/mhammedi20a.html}.

\bibitem[Mhammedi et~al.(2019)Mhammedi, Koolen, and van
  Erven]{mhammedi19_lipsc_adapt_with_multip_learn}
Zakaria Mhammedi, Wouter~M. Koolen, and Tim van Erven.
\newblock {Lipschitz} adaptivity with multiple learning rates in online
  learning.
\newblock In Alina Beygelzimer and Daniel Hsu, editors, \emph{Conference on
  Learning Theory, {COLT} 2019, 25-28 June 2019, Phoenix, AZ, {USA}}, volume~99
  of \emph{Proceedings of Machine Learning Research}, pages 2490--2511. PMLR,
  2019.
\newblock URL \url{http://proceedings.mlr.press/v99/mhammedi19a.html}.

\bibitem[Nesterov(2014)]{nesterov14_universal_grad_methods}
Yu~Nesterov.
\newblock Universal gradient methods for convex optimization problems.
\newblock \emph{Mathematical Programming}, 152\penalty0 (1-2):\penalty0
  381--404, 2014.
\newblock \doi{10.1007/s10107-014-0790-0}.
\newblock URL \url{https://doi.org/10.1007/s10107-014-0790-0}.

\bibitem[Nesterov(2018)]{nesterov18_lectures_cvx_opt}
Yurii Nesterov.
\newblock \emph{Lectures on Convex Optimization}.
\newblock Springer Publishing Company, Incorporated, 2nd edition, 2018.
\newblock ISBN 3319915770.

\bibitem[Orabona(2023)]{orabona23_normal_gradien_all}
Francesco Orabona.
\newblock Normalized gradients for all.
\newblock \emph{arXiv preprint arXiv:2308.05621}, abs/2308.05621, 2023.
\newblock URL \url{https://arXiv.org/abs/2308.05621}.

\bibitem[Orabona and Cutkosky(2020)]{orabona2020parameter}
Francesco Orabona and Ashok Cutkosky.
\newblock {ICML} 2020 tutorial on parameter-free online optimization.
\newblock \emph{{ICML} Tutorials}, 2020.
\newblock URL \url{https://parameterfree.com/icml-tutorial/}.

\bibitem[Orabona and Pál(2016)]{orabona16_coin_bettin_param_free_onlin_learn}
Francesco Orabona and Dávid Pál.
\newblock Coin betting and parameter-free online learning.
\newblock In Daniel~D. Lee, Masashi Sugiyama, Ulrike von Luxburg, Isabelle
  Guyon, and Roman Garnett, editors, \emph{Advances in Neural Information
  Processing Systems 29: Annual Conference on Neural Information Processing
  Systems 2016, December 5-10, 2016, Barcelona, Spain}, pages 577--585, 2016.
\newblock URL
  \url{https://proceedings.neurips.cc/paper/2016/hash/320722549d1751cf3f247855f937b982-Abstract.html}.

\bibitem[Polyak(1987)]{polyak1987introduction}
B.~T. Polyak.
\newblock \emph{Introduction to Optimization}.
\newblock Translations Series in Mathematics and Engineering. Optimization
  Software Inc., New York, 1987.

\bibitem[Sivaprasad et~al.(2020)Sivaprasad, Mai, Vogels, Jaggi, and
  Fleuret]{sivaprasad19_optim_bench_needs_to_accoun_hyper_tunin}
Prabhu~Teja Sivaprasad, Florian Mai, Thijs Vogels, Martin Jaggi, and François
  Fleuret.
\newblock Optimizer benchmarking needs to account for hyperparameter tuning.
\newblock In \emph{Proceedings of the 37th International Conference on Machine
  Learning, {ICML} 2020, 13-18 July 2020, Virtual Event}, volume 119 of
  \emph{Proceedings of Machine Learning Research}, pages 9036--9045. PMLR,
  2020.
\newblock URL \url{http://proceedings.mlr.press/v119/sivaprasad20a.html}.

\bibitem[Touvron et~al.(2023{\natexlab{a}})Touvron, Lavril, Izacard, Martinet,
  Lachaux, Lacroix, Rozière, Goyal, Hambro, Azhar, Rodriguez, Joulin, Grave,
  and Lample]{touvron23_llamaone}
Hugo Touvron, Thibaut Lavril, Gautier Izacard, Xavier Martinet, Marie-Anne
  Lachaux, Timothée Lacroix, Baptiste Rozière, Naman Goyal, Eric Hambro,
  Faisal Azhar, Aurelien Rodriguez, Armand Joulin, Edouard Grave, and Guillaume
  Lample.
\newblock {LLaMA}: Open and efficient foundation language models.
\newblock \emph{arXiv preprint arXiv:2302.13971}, abs/2302.13971,
  2023{\natexlab{a}}.
\newblock URL \url{https://arXiv.org/abs/2302.13971}.

\bibitem[Touvron et~al.(2023{\natexlab{b}})Touvron, Martin, Stone, Albert,
  Almahairi, Babaei, Bashlykov, Batra, Bhargava, Bhosale, Bikel, Blecher,
  Ferrer, Chen, Cucurull, Esiobu, Fernandes, Fu, Fu, Fuller, Gao, Goswami,
  Goyal, Hartshorn, Hosseini, Hou, Inan, Kardas, Kerkez, Khabsa, Kloumann,
  Korenev, Koura, Lachaux, Lavril, Lee, Liskovich, Lu, Mao, Martinet, Mihaylov,
  Mishra, Molybog, Nie, Poulton, Reizenstein, Rungta, Saladi, Schelten, Silva,
  Smith, Subramanian, Tan, Tang, Taylor, Williams, Kuan, Xu, Yan, Zarov, Zhang,
  Fan, Kambadur, Narang, Rodriguez, Stojnic, Edunov, and
  Scialom]{touvron23_llamatwo}
Hugo Touvron, Louis Martin, Kevin Stone, Peter Albert, Amjad Almahairi, Yasmine
  Babaei, Nikolay Bashlykov, Soumya Batra, Prajjwal Bhargava, Shruti Bhosale,
  Dan Bikel, Lukas Blecher, Cristian~Canton Ferrer, Moya Chen, Guillem
  Cucurull, David Esiobu, Jude Fernandes, Jeremy Fu, Wenyin Fu, Brian Fuller,
  Cynthia Gao, Vedanuj Goswami, Naman Goyal, Anthony Hartshorn, Saghar
  Hosseini, Rui Hou, Hakan Inan, Marcin Kardas, Viktor Kerkez, Madian Khabsa,
  Isabel Kloumann, Artem Korenev, Punit~Singh Koura, Marie-Anne Lachaux,
  Thibaut Lavril, Jenya Lee, Diana Liskovich, Yinghai Lu, Yuning Mao, Xavier
  Martinet, Todor Mihaylov, Pushkar Mishra, Igor Molybog, Yixin Nie, Andrew
  Poulton, Jeremy Reizenstein, Rashi Rungta, Kalyan Saladi, Alan Schelten, Ruan
  Silva, Eric~Michael Smith, Ranjan Subramanian, Xiaoqing~Ellen Tan, Binh Tang,
  Ross Taylor, Adina Williams, Jian~Xiang Kuan, Puxin Xu, Zheng Yan, Iliyan
  Zarov, Yuchen Zhang, Angela Fan, Melanie Kambadur, Sharan Narang, Aurelien
  Rodriguez, Robert Stojnic, Sergey Edunov, and Thomas Scialom.
\newblock {LLaMA} 2: Open foundation and fine-tuned chat models.
\newblock \emph{arXiv preprint arXiv:2307.09288}, 2023{\natexlab{b}}.
\newblock URL \url{http://arxiv.org/abs/2307.09288v2}.

\bibitem[Vershynin(2018)]{vershynin18_high_dim_prob}
Roman Vershynin.
\newblock High-dimensional probability: An introduction with applications in
  data science.
\newblock 2018.
\newblock \doi{10.1017/9781108231596}.

\bibitem[Woodworth and
  Srebro(2016)]{woodworth16_tight_compl_bound_optim_compos_objec}
Blake~E. Woodworth and Nati Srebro.
\newblock Tight complexity bounds for optimizing composite objectives.
\newblock In Daniel~D. Lee, Masashi Sugiyama, Ulrike von Luxburg, Isabelle
  Guyon, and Roman Garnett, editors, \emph{Advances in Neural Information
  Processing Systems 29: Annual Conference on Neural Information Processing
  Systems 2016, December 5-10, 2016, Barcelona, Spain}, pages 3639--3647, 2016.
\newblock URL
  \url{https://proceedings.neurips.cc/paper/2016/hash/645098b086d2f9e1e0e939c27f9f2d6f-Abstract.html}.

\bibitem[Workshop et~al.(2022)Workshop, Scao, Fan, Akiki, Pavlick, Ilić,
  Hesslow, Castagné, Luccioni, Yvon, Gallé, Tow, Rush, Biderman, Webson,
  Ammanamanchi, Wang, Sagot, Muennighoff, Moral, Ruwase, Bawden, Bekman,
  McMillan-Major, Beltagy, Nguyen, Saulnier, Tan, Suarez, Sanh, Laurençon,
  Jernite, Launay, Mitchell, Raffel, Gokaslan, Simhi, Soroa, Aji, Alfassy,
  Rogers, Nitzav, Xu, Mou, Emezue, Klamm, Leong, Strien, Adelani, Radev,
  Ponferrada, Levkovizh, Kim, Natan, Toni, Dupont, Kruszewski, Pistilli,
  Elsahar, Benyamina, Tran, Yu, Abdulmumin, Johnson, Gonzalez-Dios, Rosa, Chim,
  Dodge, Zhu, Chang, Frohberg, Tobing, Bhattacharjee, Almubarak, Chen, Lo,
  Werra, Weber, Phan, allal, Tanguy, Dey, Muñoz, Masoud, Grandury, Saško,
  Huang, Coavoux, Singh, Jiang, Vu, Jauhar, Ghaleb, Subramani, Kassner, Khamis,
  Nguyen, Espejel, Gibert, Villegas, Henderson, Colombo, Amuok, Lhoest,
  Harliman, Bommasani, López, Ribeiro, Osei, Pyysalo, Nagel, Bose, Muhammad,
  Sharma, Longpre, Nikpoor, Silberberg, Pai, Zink, Torrent, Schick, Thrush,
  Danchev, Nikoulina, Laippala, Lepercq, Prabhu, Alyafeai, Talat, Raja,
  Heinzerling, Si, Taşar, Salesky, Mielke, Lee, Sharma, Santilli, Chaffin,
  Stiegler, Datta, Szczechla, Chhablani, Wang, Pandey, Strobelt, Fries, Rozen,
  Gao, Sutawika, Bari, Al-shaibani, Manica, Nayak, Teehan, Albanie, Shen,
  Ben-David, Bach, Kim, Bers, Fevry, Neeraj, Thakker, Raunak, Tang, Yong, Sun,
  Brody, Uri, Tojarieh, Roberts, Chung, Tae, Phang, Press, Li, Narayanan,
  Bourfoune, Casper, Rasley, Ryabinin, Mishra, Zhang, Shoeybi, Peyrounette,
  Patry, Tazi, Sanseviero, Platen, Cornette, Lavallée, Lacroix, Rajbhandari,
  Gandhi, Smith, Requena, Patil, Dettmers, Baruwa, Singh, Cheveleva, Ligozat,
  Subramonian, Névéol, Lovering, Garrette, Tunuguntla, Reiter, Taktasheva,
  Voloshina, Bogdanov, Winata, Schoelkopf, Kalo, Novikova, Forde, Clive, Kasai,
  Kawamura, Hazan, Carpuat, Clinciu, Kim, Cheng, Serikov, Antverg, Wal, Zhang,
  Zhang, Gehrmann, Mirkin, Pais, Shavrina, Scialom, Yun, Limisiewicz, Rieser,
  Protasov, Mikhailov, Pruksachatkun, Belinkov, Bamberger, Kasner, Rueda,
  Pestana, Feizpour, Khan, Faranak, Santos, Hevia, Unldreaj, Aghagol,
  Abdollahi, Tammour, HajiHosseini, Behroozi, Ajibade, Saxena, Ferrandis,
  McDuff, Contractor, Lansky, David, Kiela, Nguyen, Tan, Baylor, Ozoani, Mirza,
  Ononiwu, Rezanejad, Jones, Bhattacharya, Solaiman, Sedenko, Nejadgholi,
  Passmore, Seltzer, Sanz, Dutra, Samagaio, Elbadri, Mieskes, Gerchick,
  Akinlolu, McKenna, Qiu, Ghauri, Burynok, Abrar, Rajani, Elkott, Fahmy,
  Samuel, An, Kromann, Hao, Alizadeh, Shubber, Wang, Roy, Viguier, Le, Oyebade,
  Le, Yang, Nguyen, Kashyap, Palasciano, Callahan, Shukla, Miranda-Escalada,
  Singh, Beilharz, Wang, Brito, Zhou, Jain, Xu, Fourrier, Periñán, Molano,
  Yu, Manjavacas, Barth, Fuhrimann, Altay, Bayrak, Burns, Vrabec, Bello, Dash,
  Kang, Giorgi, Golde, Posada, Sivaraman, Bulchandani, Liu, Shinzato,
  Bykhovetz, Takeuchi, Pàmies, Castillo, Nezhurina, Sänger, Samwald, Cullan,
  Weinberg, Wolf, Mihaljcic, Liu, Freidank, Kang, Seelam, Dahlberg, Broad,
  Muellner, Fung, Haller, Chandrasekhar, Eisenberg, Martin, Canalli, Su, Su,
  Cahyawijaya, Garda, Deshmukh, Mishra, Kiblawi, Ott, Sang-aroonsiri, Kumar,
  Schweter, Bharati, Laud, Gigant, Kainuma, Kusa, Labrak, Bajaj, Venkatraman,
  Xu, Xu, Xu, Tan, Xie, Ye, Bras, Belkada, and Wolf]{workshop22_bloom}
BigScience Workshop, Teven~Le Scao, Angela Fan, Christopher Akiki, Ellie
  Pavlick, Suzana Ilić, Daniel Hesslow, Roman Castagné, Alexandra~Sasha
  Luccioni, François Yvon, Matthias Gallé, Jonathan Tow, Alexander~M. Rush,
  Stella Biderman, Albert Webson, Pawan~Sasanka Ammanamanchi, Thomas Wang,
  Benoît Sagot, Niklas Muennighoff, Albert Villanova~del Moral, Olatunji
  Ruwase, Rachel Bawden, Stas Bekman, Angelina McMillan-Major, Iz~Beltagy, Huu
  Nguyen, Lucile Saulnier, Samson Tan, Pedro~Ortiz Suarez, Victor Sanh, Hugo
  Laurençon, Yacine Jernite, Julien Launay, Margaret Mitchell, Colin Raffel,
  Aaron Gokaslan, Adi Simhi, Aitor Soroa, Alham~Fikri Aji, Amit Alfassy, Anna
  Rogers, Ariel~Kreisberg Nitzav, Canwen Xu, Chenghao Mou, Chris Emezue,
  Christopher Klamm, Colin Leong, Daniel~van Strien, David~Ifeoluwa Adelani,
  Dragomir Radev, Eduardo~González Ponferrada, Efrat Levkovizh, Ethan Kim,
  Eyal~Bar Natan, Francesco~De Toni, Gérard Dupont, Germán Kruszewski, Giada
  Pistilli, Hady Elsahar, Hamza Benyamina, Hieu Tran, Ian Yu, Idris Abdulmumin,
  Isaac Johnson, Itziar Gonzalez-Dios, Javier de~la Rosa, Jenny Chim, Jesse
  Dodge, Jian Zhu, Jonathan Chang, Jörg Frohberg, Joseph Tobing, Joydeep
  Bhattacharjee, Khalid Almubarak, Kimbo Chen, Kyle Lo, Leandro~Von Werra, Leon
  Weber, Long Phan, Loubna~Ben allal, Ludovic Tanguy, Manan Dey, Manuel~Romero
  Muñoz, Maraim Masoud, María Grandury, Mario Saško, Max Huang, Maximin
  Coavoux, Mayank Singh, Mike Tian-Jian Jiang, Minh~Chien Vu, Mohammad~A.
  Jauhar, Mustafa Ghaleb, Nishant Subramani, Nora Kassner, Nurulaqilla Khamis,
  Olivier Nguyen, Omar Espejel, Ona~de Gibert, Paulo Villegas, Peter Henderson,
  Pierre Colombo, Priscilla Amuok, Quentin Lhoest, Rheza Harliman, Rishi
  Bommasani, Roberto~Luis López, Rui Ribeiro, Salomey Osei, Sampo Pyysalo,
  Sebastian Nagel, Shamik Bose, Shamsuddeen~Hassan Muhammad, Shanya Sharma,
  Shayne Longpre, Somaieh Nikpoor, Stanislav Silberberg, Suhas Pai, Sydney
  Zink, Tiago~Timponi Torrent, Timo Schick, Tristan Thrush, Valentin Danchev,
  Vassilina Nikoulina, Veronika Laippala, Violette Lepercq, Vrinda Prabhu, Zaid
  Alyafeai, Zeerak Talat, Arun Raja, Benjamin Heinzerling, Chenglei Si,
  Davut~Emre Taşar, Elizabeth Salesky, Sabrina~J. Mielke, Wilson~Y. Lee,
  Abheesht Sharma, Andrea Santilli, Antoine Chaffin, Arnaud Stiegler, Debajyoti
  Datta, Eliza Szczechla, Gunjan Chhablani, Han Wang, Harshit Pandey, Hendrik
  Strobelt, Jason~Alan Fries, Jos Rozen, Leo Gao, Lintang Sutawika, M~Saiful
  Bari, Maged~S. Al-shaibani, Matteo Manica, Nihal Nayak, Ryan Teehan, Samuel
  Albanie, Sheng Shen, Srulik Ben-David, Stephen~H. Bach, Taewoon Kim, Tali
  Bers, Thibault Fevry, Trishala Neeraj, Urmish Thakker, Vikas Raunak, Xiangru
  Tang, Zheng-Xin Yong, Zhiqing Sun, Shaked Brody, Yallow Uri, Hadar Tojarieh,
  Adam Roberts, Hyung~Won Chung, Jaesung Tae, Jason Phang, Ofir Press, Conglong
  Li, Deepak Narayanan, Hatim Bourfoune, Jared Casper, Jeff Rasley, Max
  Ryabinin, Mayank Mishra, Minjia Zhang, Mohammad Shoeybi, Myriam Peyrounette,
  Nicolas Patry, Nouamane Tazi, Omar Sanseviero, Patrick~von Platen, Pierre
  Cornette, Pierre~François Lavallée, Rémi Lacroix, Samyam Rajbhandari,
  Sanchit Gandhi, Shaden Smith, Stéphane Requena, Suraj Patil, Tim Dettmers,
  Ahmed Baruwa, Amanpreet Singh, Anastasia Cheveleva, Anne-Laure Ligozat, Arjun
  Subramonian, Aurélie Névéol, Charles Lovering, Dan Garrette, Deepak
  Tunuguntla, Ehud Reiter, Ekaterina Taktasheva, Ekaterina Voloshina, Eli
  Bogdanov, Genta~Indra Winata, Hailey Schoelkopf, Jan-Christoph Kalo,
  Jekaterina Novikova, Jessica~Zosa Forde, Jordan Clive, Jungo Kasai, Ken
  Kawamura, Liam Hazan, Marine Carpuat, Miruna Clinciu, Najoung Kim, Newton
  Cheng, Oleg Serikov, Omer Antverg, Oskar van~der Wal, Rui Zhang, Ruochen
  Zhang, Sebastian Gehrmann, Shachar Mirkin, Shani Pais, Tatiana Shavrina,
  Thomas Scialom, Tian Yun, Tomasz Limisiewicz, Verena Rieser, Vitaly Protasov,
  Vladislav Mikhailov, Yada Pruksachatkun, Yonatan Belinkov, Zachary Bamberger,
  Zdeněk Kasner, Alice Rueda, Amanda Pestana, Amir Feizpour, Ammar Khan, Amy
  Faranak, Ana Santos, Anthony Hevia, Antigona Unldreaj, Arash Aghagol, Arezoo
  Abdollahi, Aycha Tammour, Azadeh HajiHosseini, Bahareh Behroozi, Benjamin
  Ajibade, Bharat Saxena, Carlos~Muñoz Ferrandis, Daniel McDuff, Danish
  Contractor, David Lansky, Davis David, Douwe Kiela, Duong~A. Nguyen, Edward
  Tan, Emi Baylor, Ezinwanne Ozoani, Fatima Mirza, Frankline Ononiwu, Habib
  Rezanejad, Hessie Jones, Indrani Bhattacharya, Irene Solaiman, Irina Sedenko,
  Isar Nejadgholi, Jesse Passmore, Josh Seltzer, Julio~Bonis Sanz, Livia Dutra,
  Mairon Samagaio, Maraim Elbadri, Margot Mieskes, Marissa Gerchick, Martha
  Akinlolu, Michael McKenna, Mike Qiu, Muhammed Ghauri, Mykola Burynok, Nafis
  Abrar, Nazneen Rajani, Nour Elkott, Nour Fahmy, Olanrewaju Samuel, Ran An,
  Rasmus Kromann, Ryan Hao, Samira Alizadeh, Sarmad Shubber, Silas Wang, Sourav
  Roy, Sylvain Viguier, Thanh Le, Tobi Oyebade, Trieu Le, Yoyo Yang, Zach
  Nguyen, Abhinav~Ramesh Kashyap, Alfredo Palasciano, Alison Callahan, Anima
  Shukla, Antonio Miranda-Escalada, Ayush Singh, Benjamin Beilharz, Bo~Wang,
  Caio Brito, Chenxi Zhou, Chirag Jain, Chuxin Xu, Clémentine Fourrier,
  Daniel~León Periñán, Daniel Molano, Dian Yu, Enrique Manjavacas, Fabio
  Barth, Florian Fuhrimann, Gabriel Altay, Giyaseddin Bayrak, Gully Burns,
  Helena~U. Vrabec, Imane Bello, Ishani Dash, Jihyun Kang, John Giorgi, Jonas
  Golde, Jose~David Posada, Karthik~Rangasai Sivaraman, Lokesh Bulchandani,
  Lu~Liu, Luisa Shinzato, Madeleine Hahn~de Bykhovetz, Maiko Takeuchi, Marc
  Pàmies, Maria~A Castillo, Marianna Nezhurina, Mario Sänger, Matthias
  Samwald, Michael Cullan, Michael Weinberg, Michiel~De Wolf, Mina Mihaljcic,
  Minna Liu, Moritz Freidank, Myungsun Kang, Natasha Seelam, Nathan Dahlberg,
  Nicholas~Michio Broad, Nikolaus Muellner, Pascale Fung, Patrick Haller, Ramya
  Chandrasekhar, Renata Eisenberg, Robert Martin, Rodrigo Canalli, Rosaline Su,
  Ruisi Su, Samuel Cahyawijaya, Samuele Garda, Shlok~S Deshmukh, Shubhanshu
  Mishra, Sid Kiblawi, Simon Ott, Sinee Sang-aroonsiri, Srishti Kumar, Stefan
  Schweter, Sushil Bharati, Tanmay Laud, Théo Gigant, Tomoya Kainuma, Wojciech
  Kusa, Yanis Labrak, Yash~Shailesh Bajaj, Yash Venkatraman, Yifan Xu, Yingxin
  Xu, Yu~Xu, Zhe Tan, Zhongli Xie, Zifan Ye, Mathilde Bras, Younes Belkada, and
  Thomas Wolf.
\newblock {BLOOM}: a 176b-parameter open-access multilingual language model.
\newblock \emph{arXiv preprint arXiv:2211.05100}, abs/2211.05100, 2022.
\newblock URL \url{https://arXiv.org/abs/2211.05100}.

\bibitem[Yang et~al.(2021)Yang, Hu, Babuschkin, Sidor, Liu, Farhi, Ryder,
  Pachocki, Chen, and Gao]{yang2021tuning}
Greg Yang, Edward~J Hu, Igor Babuschkin, Szymon Sidor, Xiaodong Liu, David
  Farhi, Nick Ryder, Jakub Pachocki, Weizhu Chen, and Jianfeng Gao.
\newblock Tuning large neural networks via zero-shot hyperparameter transfer.
\newblock In A.~Beygelzimer, Y.~Dauphin, P.~Liang, and J.~Wortman Vaughan,
  editors, \emph{Advances in Neural Information Processing Systems}, 2021.
\newblock URL \url{https://openreview.net/forum?id=Bx6qKuBM2AD}.

\end{thebibliography}
\bibliographystyle{plainnat}

\clearpage
\onecolumn
\part*{Appendix}

\section{Proofs for \Cref{sec:proof-tuning-free-bounded}}
\label{sec:proof-tuning-free-bounded}

\cutkoskybounded*
\begin{proof}
In the bounded setting, \citet{cutkosky19_artif_const_lipsc_hints_uncon_onlin_learn} give an algorithm that takes as parameters $\epsilon, \alpha$ and achieves the following regret
\begin{align*}
  \sum_{t=0}^{T-1} \ev{g_t, x_t - x_{\ast}} \leq \epsilon + GD &+ \norm{x_{\ast}-x_0} G \log \left[ \frac{\norm{x_{\ast}-x_0} G \exp (\alpha/4G^2)}{\epsilon} \left (1 + \frac{\sum_{t=0}^{T-1} \sqn{g_t}}{\alpha} \right)^{4.5} \right] \\
&+ \norm{x_{\ast}-x_0} \sqrt{\sum_{t=0}^{T-1} \sqn{g_t} \log \left[ \frac{\left( \sum_{t=0}^{T-1} \sqn{g_t} \right)^{10} \exp(\alpha/2G^2) \sqn{x_{\ast}}}{\epsilon^2} + 1 \right]}.
\end{align*}
If we set $\epsilon = \underline{D} \cdot \underline{G}$, $\alpha = \underline{G}^2$, and use
the upper bound $\norm{x_0 - x_{\ast}} \leq D$ and simplify we get the regret
\begin{align*}
  \sum_{t=0}^{T-1} \ev{g_t, x_t - x_{\ast}} &\leq \underline{G} \underline{D} + GD + DG \log \left[  \frac{DG}{\underline{D} \underline{G}} \left( 1 + \frac{G^2 T}{\underline{G}^2}  \right)^{4.5} \right] + D \sqrt{\sum_{t=0}^{T-1} \sqn{g_t}} \sqrt{\log \frac{T^{10} G^{20} D^2}{\underline{D}^2 \underline{G}^2}}
\end{align*}
Observe that because $\underline{G} \leq G$ and $\underline{D} \leq D$ the above can be simplified to
\begin{align*}
  \sum_{t=0}^{T-1} \ev{g_t, x_t - x_{\ast}} &\leq GD \log_+ \left[  \frac{DG}{\underline{D} \underline{G}} \left( 1 + \frac{G^2 T}{\underline{G}^2}  \right)^{4.5} \right] + D \sqrt{\sum_{t=0}^{T-1} \sqn{g_t}} \sqrt{ \log \frac{T^{10} G^{20} D^2}{\underline{D}^2 \underline{G}^2} }
\end{align*}
Call the maximum of the two log terms $\iota$, then the above rate is
\begin{align}
\label{eq:4-c}
  \sum_{t=0}^{T-1} \ev{g_t, x_t - x_{\ast}} &\leq GD \iota + D \sqrt{\sum_{t=0}^{T-1} \sqn{g_t}} \sqrt{\iota}.
\end{align}
Applying online-to-batch conversion starting from~\cref{eq:4-c} proves the
algorithm is tuning-free. For the smooth setting, it suffices to observe that
under a bounded domain we have for any $t$
\begin{align*}
\norm{g_t} &\leq \norm{g_t - \nabla f(x_t)} + \norm{\nabla f(x_t)} \\
&= \norm{g_t - \nabla f(x_t)} + \norm{\nabla f(x_t) - \nabla f(x_{\ast})} \\
&\leq \sigma + L \norm{x_t - x_{\ast}} \\
&\leq \sigma + L D.
\end{align*}
Combining this and following online-to-batch conversion as in
\citep{levy17_onlin_to_offlin_conver_univer} shows the algorithm considered is
tuning-free in the smooth setting as well.

\end{proof}

We will make use of the following two lemmas throughout the upper bound
proofs for DoG and DoWG.

\begin{lemma}\label{lem:dog-concentration}
\citep[Lemma 7]{ivgi23_dog_is_sgds_best_frien}. Let $S$ be the set of nonnegative and nondecreasing sequences. Let $C_t \in \mathcal{F}_{t-1}$ and let $X_t$ be a martingale difference sequence adapted to $\mathcal{F}_t$ such that $\abs{X_t} \leq C_t$ with probability $1$ for all $t$. Then for all $\delta \in (0, 1)$ and $\hat{X}_t \in \mathcal{F}_{t-1}$ such that $\abs{\hat{X}_t} \leq C_t$ with probability $1$, we have that with probability $1-\delta$ that for all $c > 0$
\begin{align*}
  \abs{\sum_{i=1}^t y_i X_i} \leq 8 y_t \sqrt{\theta_{t, \delta} \sum_{i=1}^t (X_i - \hat{X}_i)^2 + \left[ c \right] \theta_{t, \delta}^2} + \pr[ \exists t \leq T \mid C_t > c ]
\end{align*}
\end{lemma}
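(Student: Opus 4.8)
The plan is a localization-plus-concentration argument. First I would remove the obstruction that the increments are only bounded by the \emph{random}, merely predictable envelope $C_t$ rather than by a constant. Fix $c > 0$ and define $\tau_c = \min\{t : C_t > c\}$; since $\{C_t > c\} \in \mathcal{F}_{t-1}$ this is a stopping time, and the localized increments $\widetilde{X}_i = X_i \mathbf{1}[i < \tau_c]$ and $\widetilde{\hat X}_i = \hat X_i \mathbf{1}[i < \tau_c]$ satisfy $|\widetilde X_i| \leq c$, $|\widetilde{\hat X}_i| \leq c$ almost surely, $\mathbb{E}[\widetilde X_i \mid \mathcal{F}_{i-1}] = 0$, and $\widetilde{\hat X}_i \in \mathcal{F}_{i-1}$. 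On the event $\{\tau_c > T\} = \{\forall s \leq T : C_s \leq c\}$ we have $\sum_{i=1}^t y_i \widetilde X_i = \sum_{i=1}^t y_i X_i$ and $\sum_{i=1}^t (\widetilde X_i - \widetilde{\hat X}_i)^2 \leq \sum_{i=1}^t (X_i - \hat X_i)^2$ for every $t \leq T$, while its complement has probability exactly $\pr[\exists t \leq T \mid C_t > c]$.

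The core of the proof is a time-uniform, weighted, empirical-Bernstein bound for the bounded martingale: on an event of probability at least $1-\delta$, simultaneously for all $t \leq T$,
\begin{align*}
  \abs{\sum_{i=1}^t y_i \widetilde X_i} \leq 8 y_t \sqrt{\theta_{t,\delta} \sum_{i=1}^t (\widetilde X_i - \widetilde{\hat X}_i)^2 + c\, \theta_{t,\delta}^2}.
\end{align*}
I would build this from three ingredients: (i) a Freedman-type exponential supermartingale for the bounded martingale difference sequence $\widetilde X_i - \widetilde{\hat X}_i$ (a legitimate MDS increment because $\widetilde{\hat X}_i$ is predictable), controlling the weighted sum by its conditional quadratic variation; (ii) a self-bounding step replacing the unobservable conditional quadratic variation by the data-dependent proxy $\sum (\widetilde X_i - \widetilde{\hat X}_i)^2$ at the cost of the lower-order $c\, \theta_{t,\delta}^2$ term; and (iii) a peeling/stitching argument over dyadic scales of both the accumulated proxy variance and the time index to obtain uniformity in $t$ without prior knowledge of the variance, the nondecreasing weights $y_i \in S$ being absorbed block-by-block via summation by parts. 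This is precisely Lemma~7 of \citep{ivgi23_dog_is_sgds_best_frien}, so in the paper I would simply invoke it; in a self-contained treatment I would instead cite a generic time-uniform line-crossing inequality and verify its hypotheses for $\widetilde X_i - \widetilde{\hat X}_i$.

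Combining the two steps, on the intersection of the concentration event and $\{\tau_c > T\}$ the displayed bound becomes the claimed inequality with $\widetilde X$, $\widetilde{\hat X}$ replaced by $X$, $\hat X$, and the failure from leaving $\{\tau_c > T\}$ is accounted for by the additive $\pr[\exists t \leq T \mid C_t > c]$ term on the right-hand side. To make the statement hold for \emph{all} $c > 0$ on a single event, I would note that the localizations are nested and converge to the untruncated increments as $c \to \infty$, so the exponential-supermartingale construction of steps (ii)--(iii) can be run once, yielding a single good event of probability $\geq 1-\delta$ valid for every $c$. I expect step two---the time-uniform weighted empirical-Bernstein bound with the explicit constant $8$, handling simultaneously the data-dependent variance proxy, the predictable comparison sequence $\hat X_i$, and the nondecreasing weights $y_i$---to be the main obstacle; everything else is bookkeeping around it.
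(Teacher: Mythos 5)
The paper gives no proof of this lemma at all---it is imported verbatim as \citep[Lemma 7]{ivgi23_dog_is_sgds_best_frien}---and you correctly conclude that simply invoking that citation is all that is required here. Your sketch of the underlying argument (stopping-time localization of the envelope $C_t$ followed by a time-uniform empirical-Bernstein bound for the predictable-comparison differences) faithfully reflects the structure of the original proof, so there is nothing to add.
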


\begin{lemma}\label{lem:log-term-place}
\citep[Lemma 3]{ivgi23_dog_is_sgds_best_frien}. Let $s_0, s_1, \ldots, s_T$ be a positive increasing sequence. Then,
\begin{align*}
  \max_{t \leq T} \sum_{i=0}^{t-1} \frac{s_i}{s_t}  \geq \frac{1}{e} \left( \frac{T}{\log_+ \frac{s_T}{s_0} } - 1 \right).
\end{align*}
\end{lemma}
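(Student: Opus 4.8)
The plan is to avoid any telescoping or generating-function manipulation and instead argue by a pigeonhole over a geometric partition of the values $s_0 \le s_1 \le \cdots \le s_T$. Write $M := \max_{t \le T} \sum_{i=0}^{t-1} s_i/s_t$ for the left-hand side, and set $v := s_T/s_0 \ge 1$. First I would dispose of the degenerate regime: if $\log_+ v \ge T+1$, then $\frac1e\big(T/\log_+ v - 1\big) < 0 \le M$ and the claim is immediate; so we may assume $\log_+ v < T+1$.

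Next, partition the $T+1$ indices $\{0,1,\ldots,T\}$ into buckets according to the scale of $s_t$ measured in multiplicative steps of $e$: put index $t$ into the bucket indexed by the integer $j \ge 1$ with $e^{j-1} \le s_t/s_0 \le e^j$ (taking the topmost bucket closed so that $t=T$ is included). Since $s_0/s_0 = 1$ and $s_T/s_0 = v$, the number of nonempty buckets is at most $\max(1,\lceil \ln v\rceil) \le \ln v + 1 = \log_+ v =: J$. By pigeonhole, one bucket contains at least $(T+1)/J$ indices; call them $t_1 < t_2 < \cdots < t_k$, so $k \ge (T+1)/J$. In the non-degenerate regime we have $k \ge 2$ (if $k=1$ then $J \ge T+1$, i.e.\ $\log_+ v \ge T+1$, already handled), hence $t_k \ge 1$ and the sum $\sum_{i=0}^{t_k-1}$ below is nonempty.

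The key observation is that all values inside a single bucket lie within a factor $e$ of one another, so $s_{t_\ell} \ge s_{t_k}/e$ for every $\ell \le k$. Since the indices $t_1,\ldots,t_{k-1}$ all lie in $\{0,1,\ldots,t_k-1\}$, evaluating the defining maximum at $t = t_k$ and discarding the remaining nonnegative terms gives
\[
M \;\ge\; \sum_{i=0}^{t_k-1}\frac{s_i}{s_{t_k}} \;\ge\; \sum_{\ell=1}^{k-1}\frac{s_{t_\ell}}{s_{t_k}} \;\ge\; \frac{k-1}{e} \;\ge\; \frac1e\left(\frac{T+1}{J}-1\right) \;\ge\; \frac1e\left(\frac{T}{\log_+\frac{s_T}{s_0}}-1\right),
\]
which is the claimed inequality (in fact with $T+1$ in place of $T$). \textbf{Main obstacle.} There is nothing deep once the partition idea is in hand; the only points needing care are (i) choosing the bucket ratio to be exactly $e$, which is precisely what produces $\log_+(\cdot) = \log(\cdot)+1$ with the sharp leading constant $1/e$ rather than some worse constant, and (ii) the bookkeeping around ceilings, the boundary case $s_T = s_0$ (a single bucket, $J=1$), and the vacuous regime where the stated bound is already nonpositive. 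I would avoid the tempting alternative via partial sums — using $A_{t-1}:=\sum_{i<t}s_i \le M s_t$ to deduce $A_t \ge (1+1/M)A_{t-1}$ and hence $(1+1/M)^T s_0 \le A_T \le (M+1)s_T$ — because the resulting exponential-growth estimate is lossy for slowly growing sequences and does not recover the stated constant.
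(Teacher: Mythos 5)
Your argument is correct. The degenerate case ($\log_+ (s_T/s_0) \ge T+1$, where the right-hand side is negative) is handled cleanly; in the remaining case the pigeonhole over geometric buckets of ratio $e$ is sound: all ratios $s_t/s_0$ lie in $[1, s_T/s_0]$, so the number of nonempty buckets is at most $\max(1,\lceil \ln (s_T/s_0)\rceil) \le \log_+ (s_T/s_0)$, the largest bucket has $k \ge (T+1)/\log_+(s_T/s_0) > 1$ members, any two members differ by at most a factor $e$, and evaluating the maximum at the largest index of that bucket gives $M \ge (k-1)/e$, which yields the claim (indeed with $T+1$ in place of $T$). Note, however, that there is no in-paper proof to compare against: the lemma is imported verbatim as Lemma~3 of \citet{ivgi23_dog_is_sgds_best_frien}, and this paper only cites it. Relative to that source, your bucketing argument is a genuinely self-contained alternative to the partial-sum route you mention at the end (bounding $A_{t-1} \le M s_t$ and iterating $A_t \ge (1+1/M)A_{t-1}$); your proof makes transparent exactly where the constant $1/e$ and the $+1$ inside $\log_+$ come from (the bucket ratio $e$ and the base bucket covering $s_T/s_0 \le e$), at the cost of the boundary bookkeeping you flag. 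One caveat: your parenthetical claim that the partial-sum approach cannot recover the stated constant is overstated — the cited lemma carries exactly this constant, and a contradiction argument along those lines can be pushed through with a short case analysis — but this side remark does not affect the validity of your proof.
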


\begin{lemma}\label{lem:dog-intermediate-almost-sure}
\citep[Lemma 1]{ivgi23_dog_is_sgds_best_frien}. Suppose that $f$ is convex and
has a minimizer $x_{\ast}$. Then the iterates generated by DoG satisfy for each $t$:
\begin{align*}
   \sum_{k=a}^{b-1} \overline{r}_k \ev{ g_k , x_k - x_{\ast} } \leq \overline{r}_b (2 \overline{d}_b + \overline{r}_b) \sqrt{u_{b-1}}.
\end{align*}
\end{lemma}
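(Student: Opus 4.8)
The plan is to start from the standard SGD one-step identity, but to arrange the bookkeeping so the error term comes out as the mixed quantity $\bar{r}_b \bar{d}_b$ rather than the naive $\bar{d}_b^2$. The device is to split $x_k - x_{\ast} = (x_k - x_0) + (x_0 - x_{\ast})$ and treat the ``drift from initialization'' piece $\sum_k \bar{r}_k \ev{g_k, x_k - x_0}$ and the ``initialization bias'' piece $\sum_k \bar{r}_k \ev{g_k, x_0 - x_{\ast}}$ separately. Everything is pathwise: only the DoG update $x_{k+1} = x_k - \eta_k g_k$ with $\eta_k = \bar{r}_k / \sqrt{u_k}$ is used, so no probabilistic argument enters.

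For the drift piece, expanding $\sqn{x_{k+1} - x_0}$, rearranging, and multiplying through by $\sqrt{u_k}/2$ gives the per-step identity
\begin{align*}
\bar{r}_k \ev{g_k, x_k - x_0} = \frac{\sqrt{u_k}}{2} \left( \sqn{x_k - x_0} - \sqn{x_{k+1} - x_0} \right) + \frac{\bar{r}_k^2 \sqn{g_k}}{2 \sqrt{u_k}}.
\end{align*}
Summing over $k = a, \ldots, b-1$ and using (i) summation by parts on the first term — the weights $\sqrt{u_k}$ are nondecreasing and every $\sqn{x_k - x_0} \leq \bar{r}_k^2 \leq \bar{r}_b^2$ — together with (ii) the classical AdaGrad inequality $\sum_{k=0}^{b-1} \sqn{g_k}/\sqrt{u_k} \leq 2\sqrt{u_{b-1}}$ and $\bar{r}_k \leq \bar{r}_b$ on the second term bounds the drift piece by a constant multiple of $\bar{r}_b^2 \sqrt{u_{b-1}}$. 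For the bias piece I would write $\sum_{k=a}^{b-1} \bar{r}_k \ev{g_k, x_0 - x_{\ast}} = \ev{\sum_{k=a}^{b-1} \bar{r}_k g_k, \, x_0 - x_{\ast}}$, bound it by Cauchy--Schwarz with $\norm{x_0 - x_{\ast}} \leq \bar{d}_b$, and bound $\sum_{k=a}^{b-1} \bar{r}_k g_k = \sum_{k=a}^{b-1} \sqrt{u_k}(x_k - x_{k+1})$ in norm by a second summation by parts (writing $x_k - x_{k+1} = (x_k - x_0) - (x_{k+1} - x_0)$ and using $\norm{x_k - x_0} \leq \bar{r}_b$ with monotonicity of $\sqrt{u_k}$), which telescopes to $\norm{\sum_{k=a}^{b-1} \bar{r}_k g_k} \leq 2\bar{r}_b \sqrt{u_{b-1}}$, so the bias piece is at most $2\bar{r}_b \bar{d}_b \sqrt{u_{b-1}}$. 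Adding the two pieces and collecting constants gives $\bar{r}_b(2\bar{d}_b + \bar{r}_b)\sqrt{u_{b-1}}$.

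The main obstacle is purely the constant accounting at the finish: one must make the two summation-by-parts computations and the AdaGrad sum line up with exactly the right coefficients (so that the $\bar{r}_b^2$ term carries the stated constant and not a slightly larger one), being careful with the off-by-one bookkeeping — $u_{b-1}$ versus $u_b$, ranges starting at $a$ rather than $0$ — and with the fact that $\bar{r}_\cdot$ is itself a running maximum, so that $\bar{r}_k \leq \bar{r}_{b-1} \leq \bar{r}_b$ can be used freely. These are precisely the manipulations carried out in \citep[Lemma~1]{ivgi23_dog_is_sgds_best_frien}; conceptually, the only idea is the decomposition that buys the mixed $\bar{r}_b \bar{d}_b$ term in place of $\bar{d}_b^2$, which is what keeps the eventual DoG rate proportional to $\bar{d}_b$ rather than $\bar{d}_b^2$.
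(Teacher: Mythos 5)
The paper does not actually prove this lemma: it is quoted verbatim from \citep[Lemma 1]{ivgi23_dog_is_sgds_best_frien}, whose argument multiplies the one-step expansion of $\sqn{x_{k+1}-x_{\ast}}$ by $\sqrt{u_k}/2$ so as to telescope $d_k^2 - d_{k+1}^2$ directly, and then extracts the mixed term by summation by parts combined with the difference-of-squares bound $\abs{d_k^2 - d_b^2} \leq (d_k+d_b)\norm{x_k - x_b} \leq 4\overline{d}_b\overline{r}_b$; the AdaGrad-type sum $\sum_k \overline{r}_k^2\sqn{g_k}/\sqrt{u_k}$ then contributes exactly the remaining $\overline{r}_b^2\sqrt{u_{b-1}}$. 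Your vector-level split $x_k - x_{\ast} = (x_k - x_0) + (x_0 - x_{\ast})$ is a genuinely different way to manufacture the mixed term, and both component bounds you sketch do check out: the bias piece is at most $2\overline{r}_b\sqrt{u_{b-1}}\cdot\norm{x_0-x_{\ast}} \leq 2\overline{r}_b\overline{d}_b\sqrt{u_{b-1}}$, and the drift piece is controlled exactly as you describe.

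The one real issue is that the constants do not land on the stated bound, and this is intrinsic to your decomposition rather than a matter of careful bookkeeping. In the drift piece you pay \emph{twice}: the Abel remainder of $\sum_k \tfrac{\sqrt{u_k}}{2}\left(\sqn{x_k-x_0}-\sqn{x_{k+1}-x_0}\right)$ is at most (and in general close to) $\tfrac{1}{2}\overline{r}_b^2\sqrt{u_{b-1}}$, and the AdaGrad sum contributes up to a further $\overline{r}_b^2\sqrt{u_{b-1}}$, so the drift piece is bounded by $\tfrac{3}{2}\overline{r}_b^2\sqrt{u_{b-1}}$ and not by $\overline{r}_b^2\sqrt{u_{b-1}}$. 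Adding the bias piece yields $\overline{r}_b\left(2\overline{d}_b + \tfrac{3}{2}\overline{r}_b\right)\sqrt{u_{b-1}}$, slightly weaker than the stated $\overline{r}_b\left(2\overline{d}_b + \overline{r}_b\right)\sqrt{u_{b-1}}$. In the cited proof the analogous telescoping remainder is charged to the $2\overline{r}_b\overline{d}_b$ term (because the telescoped quantity is $d_k^2$, whose deviations are bounded by $\overline{d}_b\cdot\overline{r}_b$ via difference of squares), so only the AdaGrad sum pays $\overline{r}_b^2$. Your extra $\tfrac{1}{2}\overline{r}_b^2\sqrt{u_{b-1}}$ is harmless for every downstream use in this paper---the proof of \Cref{thm:dog-dowg-tuning-free-bounded} only invokes the lemma in the looser form $2\overline{r}_b(\overline{d}_b+\overline{r}_b)\sqrt{u_{b-1}}$---but as written your argument proves a strictly weaker inequality than the one stated, so either adjust the constant in the statement or switch to the telescoping-in-$d_k^2$ route for the drift.
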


\begin{lemma}\label{lem:dowg-intermediate-almost-sure}
Suppose that $f$ is convex and has a minimizer $x_{\ast}$. Then iterates generated by DoWG satisfy for every $t$:
\begin{align*}
\sum_{k=0}^{t-1} \overline{r}_k^2 \ev{\nabla f(x_k), x_k - x_{\ast}} \leq 2 \bar{r}_t \left[ \bar{d}_t + \bar{r}_t \right] \sqrt{v_{t-1}} + \sum_{k=0}^{t-1} \overline{r}_k^2 \ev{\nabla f(x_k) - g_k, x_k - x_{\ast}}
\end{align*}
\end{lemma}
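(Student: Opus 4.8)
The inequality is a deterministic consequence of the DoWG recursion $x_{k+1} = x_k - \eta_k g_k$ with $\eta_k = \overline{r}_k^2/\sqrt{v_k}$, and all of the stochasticity is quarantined in the explicit error term on the right. So the plan is to first prove the cleaner bound $\sum_{k=0}^{t-1} \overline{r}_k^2 \ev{g_k, x_k - x_{\ast}} \leq 2\overline{r}_t(\overline{d}_t + \overline{r}_t)\sqrt{v_{t-1}}$ using only the update rule, and then pass from $g_k$ to $\nabla f(x_k)$ by adding and subtracting. The argument is the DoWG analogue of the proof of \Cref{lem:dog-intermediate-almost-sure}, modified to carry the extra $\overline{r}_k^2$ weights.

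First I would run the usual one-step expansion. Writing $\Delta_k \eqdef \sqn{x_k - x_{\ast}}$, the update gives $\Delta_{k+1} = \Delta_k - 2\eta_k \ev{g_k, x_k - x_{\ast}} + \eta_k^2 \sqn{g_k}$, and multiplying the rearranged identity by $\overline{r}_k^2$ and using $\overline{r}_k^2/\eta_k = \sqrt{v_k}$, $\overline{r}_k^2 \eta_k = \overline{r}_k^4/\sqrt{v_k}$ yields
\begin{align*}
\overline{r}_k^2 \ev{g_k, x_k - x_{\ast}} = \tfrac{1}{2}\sqrt{v_k}\,(\Delta_k - \Delta_{k+1}) + \frac{\overline{r}_k^4 \sqn{g_k}}{2\sqrt{v_k}}.
\end{align*}
Summing over $k = 0, \ldots, t-1$ breaks the target sum into a telescoping ``distance'' part and an AdaGrad-style ``stepsize'' part. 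For the stepsize part I would use $\overline{r}_k^2 \sqn{g_k} = v_k - v_{k-1}$ (setting $v_{-1} \eqdef 0$), the standard inequality $\tfrac{v_k - v_{k-1}}{\sqrt{v_k}} \leq 2(\sqrt{v_k} - \sqrt{v_{k-1}})$, and the monotonicity of $k \mapsto \overline{r}_k$, which together give $\sum_{k=0}^{t-1} \frac{\overline{r}_k^4 \sqn{g_k}}{2\sqrt{v_k}} \leq \overline{r}_t^2 \sqrt{v_{t-1}}$.

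The heart of the proof is the distance part $\tfrac12 \sum_{k=0}^{t-1} \sqrt{v_k}\,(\Delta_k - \Delta_{k+1})$. Summation by parts gives $\sum_{k=0}^{t-1} \sqrt{v_k}(\Delta_k - \Delta_{k+1}) = \sqrt{v_0}\,\Delta_0 + \sum_{k=1}^{t-1}(\sqrt{v_k} - \sqrt{v_{k-1}})\Delta_k - \sqrt{v_{t-1}}\,\Delta_t$, in which the nonnegative coefficients of $\Delta_0, \ldots, \Delta_{t-1}$ add up to exactly $\sqrt{v_{t-1}}$. The key move — where the naive bound $\Delta_k \leq \overline{d}_t^2$ would be too wasteful and would only produce a useless $\overline{d}_t^2 \sqrt{v_{t-1}}$ term — is to instead control $\Delta_k$ by $\Delta_t$ up to a small remainder. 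Since $\norm{x_j - x_0} \leq \overline{r}_j \leq \overline{r}_t$ for every $j \leq t$, the reverse triangle inequality gives, for each $k \leq t-1$,
\begin{align*}
\Delta_k - \Delta_t = \bigl(\norm{x_k - x_{\ast}} - \norm{x_t - x_{\ast}}\bigr)\bigl(\norm{x_k - x_{\ast}} + \norm{x_t - x_{\ast}}\bigr) \leq \norm{x_k - x_t}\cdot 2\overline{d}_t \leq 4\overline{r}_t\overline{d}_t .
\end{align*}
Substituting $\Delta_k \leq \Delta_t + 4\overline{r}_t\overline{d}_t$, the $\Delta_t$ contributions collapse against $-\sqrt{v_{t-1}}\Delta_t$ and what survives is $\sum_{k=0}^{t-1} \sqrt{v_k}(\Delta_k - \Delta_{k+1}) \leq 4\overline{r}_t\overline{d}_t\sqrt{v_{t-1}}$, so the distance part is at most $2\overline{r}_t\overline{d}_t\sqrt{v_{t-1}}$. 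Adding the two parts gives $\sum_{k=0}^{t-1} \overline{r}_k^2\ev{g_k, x_k - x_{\ast}} \leq \overline{r}_t(2\overline{d}_t + \overline{r}_t)\sqrt{v_{t-1}} \leq 2\overline{r}_t(\overline{d}_t + \overline{r}_t)\sqrt{v_{t-1}}$, and finally writing $\ev{\nabla f(x_k), x_k - x_{\ast}} = \ev{g_k, x_k - x_{\ast}} + \ev{\nabla f(x_k) - g_k, x_k - x_{\ast}}$ and summing recovers the stated inequality. The main obstacle is exactly this telescoping step: one must resist bounding $\Delta_k$ crudely and instead exploit that all DoWG iterates stay within $2\overline{r}_t$ of one another, so that only an $\overline{r}_t\overline{d}_t$-sized remainder — rather than $\overline{d}_t^2$ — comes out of the sum; propagating the $\overline{r}_k^2$ weights correctly through both the stepsize term and the movement bound is the part needing care relative to the DoG argument.
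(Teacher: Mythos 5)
Your proposal is correct and follows essentially the same route as the paper: a one-step expansion of $\sqn{x_{k+1}-x_{\ast}}$, multiplication by $\overline{r}_k^2/(2\eta_k)$ to produce the weighted telescoping and AdaGrad-style terms, and a final add-and-subtract of $\nabla f(x_k)$ to isolate the noise sum. The only difference is that the paper defers the core bound $\sum_{k=0}^{t-1}\overline{r}_k^2\ev{g_k,x_k-x_{\ast}}\leq 2\overline{r}_t(\overline{d}_t+\overline{r}_t)\sqrt{v_{t-1}}$ to the cited DoWG lemma, whereas you correctly reconstruct it (Abel summation plus the $\Delta_k-\Delta_t\leq 4\overline{r}_t\overline{d}_t$ control).
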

\begin{proof}
This is a modification of \citep[Lemma 3]{khaled23_dowg_unleas} to account for
the case where $g_k \neq \nabla f(x_k)$ (i.e. when the gradients used are not deterministic), following \citep[Lemma 1]{ivgi23_dog_is_sgds_best_frien}. We start
\begin{align*}
d_{k+1}^2 &\leq \sqn{x_k - \eta_k g_k - x_{\ast}} \\
&= \sqn{x_k - x_{\ast}} + \eta_k^2 \sqn{g_k} - 2 \eta_k \ev{g_k, x_k - x_{\ast}}.
\end{align*}
Rearranging we get
\begin{align*}
2 \eta_k \ev{g_k, x_k - x_{\ast}} \leq d_k^2 - d_{k+1}^2 + \eta_k^2 \sqn{g_k}
\end{align*}
Multiplying both sides by $\frac{\overline{r}_k^2}{2 \eta_k}$ we get
\begin{align*}
\overline{r}_k^2 \ev{g_k, x_k - x_{\ast}} \leq \frac{1}{2} \frac{\overline{r}_k^2}{\eta_k}  \left( d_k^2 - d_{k+1}^2 \right) + \frac{\overline{r}_k^2 \eta_k}{2} \sqn{g_k}.
\end{align*}
We then follow the same proof as in \citep[Lemma 3]{khaled23_dowg_unleas} to get
\begin{align}
\label{eq:16}
  \sum_{k=0}^{t-1} \overline{r}_k^2 \ev{g_k, x_k - x_{\ast}} \leq 2 \bar{r}_t \left[ \bar{d}_t + \bar{r}_t \right] \sqrt{v_{t-1}}.
\end{align}
We then decompose
\begin{align*}
\sum_{k=0}^{t-1} \overline{r}_k^2 \ev{g_k, x_k - x_{\ast}} &= \sum_{k=0}^{t-1} \overline{r}_k^2 \ev{g_k - \nabla f(x_k), x_k - x_{\ast}} + \sum_{k=0}^{t-1} \overline{r}_k^2 \ev{\nabla f(x_k), x_k - x_{\ast}}.
\end{align*}
Plugging back into \cref{eq:16} we get
\begin{align}
\label{eq:17}
  \sum_{k=0}^{t-1} \overline{r}_k^2 \ev{\nabla f(x_k), x_k - x_{\ast}} \leq 2 \bar{r}_t \left[ \bar{d}_t + \bar{r}_t \right] \sqrt{v_{t-1}} + \sum_{k=0}^{t-1} \overline{r}_k^2 \ev{\nabla f(x_k) - g_k, x_k - x_{\ast}}
\end{align}
\end{proof}

\subsection{Proof of \Cref{thm:dog-dowg-tuning-free-bounded}}

\dogdowgbounded*
\begin{proof}[Proof of \Cref{thm:dog-dowg-tuning-free-bounded}]
We first handle the case that
$T < 4 \log_+ \frac{\overline{D}}{\underline{D}}$. In this case we just return
$x_0$. If $f$ is $G$-Lipschitz, then by convexity we have
\begin{align*}
f(x_0) - f_{\ast} \leq \ev{ \nabla f(x_0) , x_0 - x_{\ast} } \leq \norm{\nabla f(x_0)} \norm{x_0 - x_{\ast}} \leq GD \leq \frac{2 GD}{\sqrt{T}} \sqrt{\log_+ \frac{\overline{D}}{\underline{D}}}.
\end{align*}
If $f$ is $L$-smooth, then by smoothness we have
\begin{align*}
f(x_0) - f_{\ast} \leq \frac{L}{2} \sqn{x_0 - x_{\ast}} \leq \frac{L D^2}{2} \leq \frac{2 L D^2}{T} \log_+ \frac{\overline{D}}{\underline{D}}.
\end{align*}
Therefore in both cases the point we return achieves a small enough loss almost
surely. Throughout the rest of the proof, we shall assume that $T \geq 4 \log_+ \frac{\overline{D}}{\underline{D}}$.

\textbf{Part 1: DoG}. In the nonsmooth setting, this is a straightforward consequence of \citep[Proposition 3]{ivgi23_dog_is_sgds_best_frien}. In particular, when using DoG with $r_{\epsilon} = \underline{D}$, then Corollary 1 in their work gives that with probability $1-\delta$ there exists some $\tau \in [T]$ and some absolute constant $c > 0$ such that
\begin{align*}
  f(\overline{x}_{\tau}) - f_{\ast} \leq c \cdot \frac{D G}{\sqrt{T}} \log \frac{60 \log 6t}{\delta} \log \frac{2D}{\underline{D}},
\end{align*}
where $\hat{x}_t \eqdef \frac{1}{\sum_{i=0}^{t-1} \overline{r}_i} \sum_{i=0}^{t-1} \overline{r}_i x_i$.

For the smooth setting, we start with \Cref{lem:dog-intermediate-almost-sure} to
get
\begin{align}
\label{eq:1}
  \sum_{k=0}^{t-1} \overline{r}_k \ev{ \nabla f(x_k) , x_k - x_{\ast} } \leq \overline{r}_t \left( 2 \overline{d}_t + \overline{r}_t \right) \sqrt{u_{t-1}} + \sum_{k=0}^{t-1} \overline{r}_k \ev{ \nabla f(x_k) - g_k , x_k - x_{\ast} }.
\end{align}
We follow \citep[Proposition 3]{ivgi23_dog_is_sgds_best_frien} and modify the
proof in a straightforward manner to accommodate the assumption of bounded noise
(rather than bounded gradients). Define
\begin{align*}
\tau_k = \min \left\{ \min \left\{ i \mid \overline{r}_i \geq 2 \overline{r}_{\tau_i-1} \right\}, T \right\}, && \tau_0 \eqdef 0.
\end{align*}
We denote by $K$ the first index such that $\tau_K = T$. Define
\begin{align*}
X_k = \ev{g_k - \nabla f(x_k), \frac{x_k - x_{\ast}}{\bar{d}_k}}, && \hat{X}_k = 0, && y_k = \bar{r}_k \bar{d}_k.
\end{align*}
Observe that $x_k$ is determined by $\mathcal{F}_{k-1}$, and since $\bar{r}_k = \max_{t \leq k} \left( \norm{x_k - x_0}, r_{\epsilon} \right)$, it is also determined by $\mathcal{F}_{k-1}$. Therefore
\begin{align*}
\ec{ X_k \mid \mathcal{F}_{k-1} } = \bar{r}_k^2 \ev{ \ec{ g_k - \nabla f(x_k) } , \frac{x_k - x_{\ast}}{\overline{d}_k}  } = 0.
\end{align*}
Moreover, observe that
\begin{align*}
\abs{X_k} \leq \norm{g_k - \nabla f(x_k)} \frac{\norm{x_k - x_{\ast}}}{\overline{d}_k} \leq \sigma.
\end{align*}
Therefore the $X_k$ form a martingale. Then we can apply Lemma~\ref{lem:dog-concentration} to get that with probability $1-\delta$ that for every $t \in [K]$
\begin{align}
  \abs{\sum_{k=0}^{t-1} \overline{r}_k \ev{ g_k - \nabla f(x_k) , x_k - x_{\ast} }} &\leq 8 \bar{d}_t \bar{r}_t \theta_{t, \delta} \sqrt{\sum_{k=0}^{t-1} (X_k)^2 + \sigma^2} \nonumber\\
&\leq 8 \bar{d}_t \bar{r}_t \theta_{t, \delta} \sqrt{\sum_{k=0}^{t-1} \sqn{g_k - \nabla f(x_k)} + \sigma^2} \nonumber\\
&\leq 8 \bar{d}_t \bar{r}_t \theta_{t, \delta} \sqrt{\sigma^2 t + \sigma^2} \nonumber\\
\label{eq:2}
&\leq 16 \bar{d}_t \bar{r}_t \theta_{t, \delta} \sigma \sqrt{T}.
\end{align}
Now observe that we can use \cref{eq:2} to get
\begin{align}
\abs{\sum_{k=\tau_{i-1}}^{\tau_i-1} \overline{r}_k \ev{g_k - \nabla f(x_k), x_k - x_{\ast}}} &\leq \abs{\sum_{k=0}^{\tau_i-1} \overline{r}_k \ev{ g_k - \nabla f(x_k) , x_k - x_{\ast} }} + \abs{\sum_{k=0}^{\tau_{i-1}-1} \overline{r}_k \ev{ g_k - \nabla f(x_k) , x_k - x_{\ast} } } \nonumber\\
&\leq 16 \bar{d}_{\tau_i} \bar{r}_{\tau_i} \theta_{t, \delta} \sigma \sqrt{T} + 16 \bar{d}_{\tau_{i-1}} \bar{r}_{\tau_{i-1}} \theta_{t, \delta} \sigma \sqrt{T} \nonumber\\
\label{eq:5}
&\leq 32 \overline{d}_{\tau_i} \overline{r}_{\tau_i} \theta_{t, \delta} \sigma \sqrt{T}.
\end{align}
Now observe that by  convexity we have for $k \in \left\{ \tau_{i-1}, \tau_{i-1}+1, \ldots, \tau_i-1 \right\}$
\begin{align*}
  0 \leq f(x_k) - f_{\ast} \leq \ev{\nabla f(x_k), x_k - x_{\ast}} \leq \frac{\overline{r}_k}{\overline{r}_{\tau_{i-1}}} \ev{ \nabla f(x_k) , x_k - x_{\ast} }.
\end{align*}
Summing up from $k=\tau_{i-1}$ to $k=\tau_i-1$ we get
\begin{align}
  \sum_{k=\tau_{i-1}}^{\tau_i-1} \ev{\nabla f(x_k), x_k - x_{\ast}} &\leq \frac{1}{\overline{r}_{\tau_{i-1}}}  \sum_{k=\tau_{i-1}}^{\tau_i-1} \overline{r}_k \ev{\nabla f(x_k), x_k - x_{\ast}} \nonumber\\
&= \frac{1}{\overline{r}_{\tau_{i-1}}}  \sum_{k=\tau_{i-1}}^{\tau_i-1} \overline{r}_k \ev{\nabla f(x_k), x_k - x_{\ast}} \nonumber\\
\label{eq:4}
&= \frac{1}{\overline{r}_{\tau_{i-1}}} \sum_{k=\tau_{i-1}}^{\tau_i-1} \overline{r}_k \ev{\nabla f(x_k) - g_k, x_k - x_{\ast}} + \frac{1}{\overline{r}_{\tau_{i-1}}}  \sum_{k=\tau_{i-1}}^{\tau_i-1} \overline{r}_k \ev{g_k, x_k - x_{\ast}}.
\end{align}
We now use \Cref{lem:dog-intermediate-almost-sure} to get that
\begin{align}
\label{eq:3}
  \sum_{k=\tau_{i-1}}^{\tau_i-1} \overline{r}_k \ev{g_k, x_k - x_{\ast}} \leq 2 \overline{r}_{\tau_i} \left( \overline{d}_{\tau_i} + \overline{r}_{\tau_i} \right) \sqrt{u_{\tau_i-1}}.
\end{align}
Plugging in the upper bounds from \cref{eq:5,eq:3} into \cref{eq:4} we get
\begin{align}
\label{eq:8}
\sum_{k=\tau_{i-1}}^{\tau_i-1} \ev{\nabla f(x_k), x_k - x_{\ast}} \leq \frac{\overline{r}_{\tau_i}}{\overline{r}_{\tau_{i-1}}} \left[ 2 \left( \overline{d}_{\tau_i} + \overline{r}_{\tau_i} \right) \sqrt{u_{\tau_i-1}} + 32 \overline{d}_{\tau_i} \theta_{t, \delta} \sigma \sqrt{T} \right].
\end{align}
Now observe that
\begin{align*}
\overline{r}_{k+1} \leq \overline{r}_k + \norm{x_{t+1} - x_t} &= \overline{r}_k \left( 1 + \frac{\norm{g_k}}{\sqrt{u_k}}  \right) \leq 2 \overline{r}_k.
\end{align*}
It follows that $\frac{\overline{r}_{\tau_i}}{\overline{r}_{\tau_i-1}} \leq 2$. Moreover
by the definition of the $\tau_i$ we have that
$\frac{\overline{r}_{\tau_i-1}}{\overline{r}_{\tau_{i-1}}} \leq 2$. Therefore
\begin{align}
\label{eq:7}
\frac{\overline{r}_{\tau_i}}{\overline{r}_{\tau_{i-1}}} &= \frac{\overline{r}_{\tau_i}}{\overline{r}_{\tau_i-1}} \frac{\overline{r}_{\tau_i-1}}{\overline{r}_{\tau_{i-1}}} \leq 2 \cdot 2 = 4.
\end{align}
using \cref{eq:7} in \cref{eq:8} we get
\begin{align*}
  \sum_{k=\tau_{i-1}}^{\tau_i-1} \ev{\nabla f(x_k), x_k - x_{\ast}} \leq 4 \left[ 2 \left( \overline{d}_{\tau_i} + \overline{r}_{\tau_i} \right) \sqrt{u_{\tau_i-1}} + 32 \overline{d}_{\tau_i} \theta_{t, \delta} \sigma \sqrt{T} \right].
\end{align*}
Summing up over the $i$, we get
\begin{align*}
  \sum_{t=0}^{T-1} \ev{ \nabla f(x_t) , x_t - x_{\ast} } \leq \sum_{i=0}^K \sum_{k=\tau_{i-1}}^{\tau_i-1} \ev{\nabla f(x_k), x_k - x_{\ast}} \leq 4 K \left[ 2 \left( \overline{d}_{\tau_i} + \overline{r}_{\tau_i} \right) \sqrt{u_{\tau_i-1}} + 32 K \overline{d}_{\tau_i} \theta_{t, \delta} \sigma \sqrt{T} \right].
\end{align*}
Observe that by definition we have
\begin{align*}
K \leq 1 + \log \frac{\overline{r}_T}{r_0} &= \log \frac{2 \overline{r}_T}{r_0}.
\end{align*}
Therefore using the last equation and convexity we have
\begin{align*}
  \sum_{t=0}^{T-1} (f(x_t) - f_{\ast}) \leq 4 \log \frac{2 \overline{r}_T}{r_0} \left[ 2 \left( \overline{d}_T + \overline{r}_T \right) \sqrt{u_{T-1}} + 32 \overline{d}_T \theta_{T, \delta} \sigma \sqrt{T} \right].
\end{align*}
Note that because the domain is bounded we have
$\max (\overline{r}_T, \overline{d}_T) \leq D$, and we used $r_0 = \underline{D}$, therefore
\begin{align}
\label{eq:9}
  \sum_{t=0}^{T-1} \left( f(x_t) - f_{\ast} \right) \leq 4 \log \frac{2 D}{\underline{D}} \left[ 4 D \sqrt{u_{T-1}} + 32 D \theta_{T, \delta} \sigma \sqrt{T} \right].
\end{align}
Observe that by our assumption on the noise and smoothness we have
\begin{align*}
  u_{T-1} &= \sum_{k=0}^{T-1} \sqn{g_k} \\
          &\leq 2 \sum_{k=0}^{T-1} \sqn{g_k - \nabla f(x_k)} + 2 \sum_{k=0}^{T-1} \sqn{\nabla f(x_k)} \\
          &\leq 2 T \sigma^2 + 2 \sum_{k=0}^{T-1} \sqn{\nabla f(x_k)} \\
          &\leq 2 T \sigma^2 + 4 L \sum_{k=0}^{T-1} \left( f(x_k) - f_{\ast} \right).
\end{align*}
Using this in \cref{eq:9} gives
\begin{align}
  \sum_{t=0}^{T-1} \left( f(x_t) - f_{\ast} \right) &\leq 4 \log \frac{2D}{\underline{D}} \left[ 8 D \sigma \sqrt{T} + 2 \sqrt{L} D \sqrt{\sum_{t=0}^{T-1} \left( f(x_t) - f_{\ast} \right)} + 32 D \theta_{T, \delta} \sigma \sqrt{T}  \right] \nonumber\\
\label{eq:11-a}
  &\leq 8 \log \frac{2D}{\underline{D}} D \sqrt{L} \sqrt{\sum_{t=0}^{T-1} \left( f(x_t) - f_{\ast} \right)} + 160 \log \frac{2D}{\underline{D}} \theta_{T, \delta} \sigma D \sqrt{T}.
\end{align}
Observe that if $y^2 \leq ay + b$, then by the quadratic equation and the triangle inequality we have
\begin{align*}
y \leq \frac{a + \sqrt{a^2 + 4b}}{2}.
\end{align*}
Squaring both sides gives
\begin{align}
\label{eq:10-c}
y^2 \leq \frac{1}{4} (a + \sqrt{a^2 + 4b})^2 \leq \frac{1}{2} \left( 2a^2 + 4b \right) = a^2 + 2b.
\end{align}
Applying this to \cref{eq:11-a} with the following choices
\begin{align*}
  y = \sqrt{\sum_{t=0}^{T-1} f(x_t) - f_{\ast}}, && a = 8 \log \frac{2D}{\underline{D}} D \sqrt{L}, && b = 160 \log \frac{2D}{\underline{D}}  \theta_{T, \delta} \sigma D \sqrt{T},
\end{align*}
then we obtain
\begin{align*}
  \sum_{t=0}^{T-1} \left( f(x_t) - f_{\ast} \right) \leq 64 \log^2 \frac{2D}{\underline{D}} L D^2 + 320 \log^2 \frac{2D}{\underline{D}} \theta_{T, \delta} \sigma D \sqrt{T}.
\end{align*}
Dividing both sides by $T$ and using Jensen's inequality we finally get
\begin{align*}
  f(\hat{x}_t) - f_{\ast}  &\leq \frac{1}{T} \sum_{t=0}^{T-1} \left( f(x_t) - f_{\ast} \right) \\
&\leq 64 \log^2 \frac{2D}{\underline{D}} \frac{L D^2}{T} + 320 \log^2 \frac{2D}{\underline{D}} \theta_{T, \delta} \frac{\sigma D}{\sqrt{T}}.
\end{align*}
This shows DoG is tuning-free in this setting.

Plugging back into \cref{eq:1} we get with probability $1-\delta$ that
\begin{align*}
\sum_{k=0}^{t-1} \overline{r}_k \ev{ \nabla f(x_k) , x_k - x_{\ast} } \leq \overline{r}_t \left( 2 \overline{d}_t + \overline{r}_t \right) \sqrt{u_{t-1}} + 16 \overline{d}_t \overline{r}_t \theta_{t, \delta} \sigma \sqrt{T}.
\end{align*}
Now we can divide both sides by $\overline{r}_t$ to get
\begin{align*}
  \sum_{k=0}^{t-1} \frac{\overline{r}_k}{\overline{r}_t} \ev{ \nabla f(x_k) , x_k - x_{\ast} } \leq \left( 2 \overline{d}_t + \overline{r}_t \right) \sqrt{u_{t-1}} + 16 \overline{d}_t \theta_{t, \delta} \sigma \sqrt{T}.
\end{align*}

\textbf{Part 2: DoWG}. By \Cref{lem:dowg-intermediate-almost-sure} we have that our iterates satisfy
\begin{align*}
\sum_{k=0}^{t-1} \overline{r}_k^2 \ev{\nabla f(x_k), x_k - x_{\ast}} \leq 2 \bar{r}_t \left[ \bar{d}_t + \bar{r}_t \right] \sqrt{v_{t-1}} + \sum_{k=0}^{t-1} \overline{r}_k^2 \ev{\nabla f(x_k) - g_k, x_k - x_{\ast}}
\end{align*}
Define
\begin{align*}
X_k = \ev{g_k - \nabla f(x_k), \frac{x_k - x_{\ast}}{\bar{d}_k}}, && \hat{X}_k = 0, && y_k = \bar{r}_k^2 \bar{d}_k.
\end{align*}
Observe that $x_k$ is determined by $\mathcal{F}_{k-1}$, and since $\bar{r}_k = \max_{t \leq k} \left( \norm{x_k - x_0}, r_{\epsilon} \right)$, it is also determined by $\mathcal{F}_{k-1}$. Therefore
\begin{align*}
\ec{ X_k \mid \mathcal{F}_{k-1} } = \bar{r}_k^2 \ev{ \ec{ g_k - \nabla f(x_k) } , \frac{x_k - x_{\ast}}{\overline{d}_k}  } = 0.
\end{align*}
Moreover, observe that
\begin{align*}
\abs{X_k} \leq \norm{g_k - \nabla f(x_k)} \frac{\norm{x_k - x_{\ast}}}{\overline{d}_k} \leq \sigma.
\end{align*}
Therefore the $X_k$ form a martingale. Then we can apply Lemma~\ref{lem:dog-concentration} to get that with probability $1-\delta$ that for every $t \in [T]$
\begin{align*}
  \abs{\sum_{k=0}^{t-1} \overline{r}_k^2 \ev{g_k - \nabla f(x_k), x_k - x_{\ast}}} &\leq 8 \bar{d}_t \bar{r}_t^2 \theta_{t, \delta} \sqrt{\sum_{k=0}^{t-1} (X_k)^2 + \sigma^2} \\
&\leq 8 \bar{d}_t \bar{r}_t^2 \theta_{t, \delta} \sqrt{\sum_{k=0}^{t-1} \sqn{g_k - \nabla f(x_k)} + \sigma^2} \\
&\leq 8 \bar{d}_t \overline{r}_t^2 \theta_{t, \delta} \sqrt{\sigma^2 t + \sigma^2} \\
&\leq 16 \bar{d}_t \bar{r}_t^2 \theta_{t, \delta} \sigma \sqrt{T}.
\end{align*}
Plugging this back into \cref{eq:17}
we get
\begin{align}
\label{eq:18}
  \sum_{k=0}^{t-1} \bar{r}_k^2 \ev{\nabla f(x_k), x_k - x_{\ast}} &\leq 2 \bar{r}_t \left[ \bar{d}_t + \bar{r}_t \right] \sqrt{v_{t-1}} + 16 \bar{d}_t \bar{r}_t^2 \theta_{t, \delta} \sigma \sqrt{T}.
\end{align}
We now divide the proof in two cases:
\begin{itemize}
\item If $f$ is $G$-Lipschitz: then
        $\sigma = \sup_{x \in \mathbb{R}^d} \norm{\nabla f(x) - g(x)} \leq 2G$ and therefore
        \cref{eq:18} reduces to
\begin{align*}
  \sum_{k=0}^{t-1} \overline{r}_k^2 \ev{ \nabla f(x_k) , x_k - x_{\ast} } \leq 2 \overline{r}_t \left[ \overline{d}_t + \overline{r}_t \right] \sqrt{v_{t-1}} + 32 \overline{d}_t \overline{r}_t^2 \theta_{t, \delta} G \sqrt{T}.
\end{align*}
And we have
\begin{align*}
  v_{t-1} &= \sum_{k=0}^{t-1} \overline{r}_k^2 \sqn{g_k} \leq \overline{r}_t^2 G^2 T.
\end{align*}
Therefore
\begin{align*}
\sum_{k=0}^{t-1} \overline{r}_k^2 \ev{ \nabla f(x_k) , x_k - x_{\ast} } &\leq 2 \overline{r}_t^2 \left[ \overline{d}_t + \overline{r}_t \right] G \sqrt{T} + 32 \overline{d}_t \overline{r}_t^2 \theta_{t, \delta} G \sqrt{T} \\
&\leq 34 \overline{r}_t^2 \left[ \overline{d}_t + \overline{r}_t  \right] \theta_{t, \delta} G \sqrt{T} \\
&\leq 68 \overline{r}_t^2 D G \sqrt{T} \theta_{t, \delta}.
\end{align*}
Using convexity we have
\begin{align*}
  \sum_{k=0}^{t-1} \overline{r}_k^2 (f(x_k) - f_{\ast}) \leq \sum_{k=0}^{t-1} \overline{r}_k^2 \ev{ \nabla f(x_k) , x_k - x_{\ast} } \leq 68 \overline{r}_t^2 D G \sqrt{T} \theta_{t, \delta}.
\end{align*}
Dividing both sides by $\sum_{k=0}^{t-1} \overline{r}_k^2$ and using Jensen's inequality we get
\begin{align}
  f(\tilde{x}_t) - f_{\ast} &\leq \frac{1}{\sum_{k=0}^{t-1} \overline{r}_k^2} \sum_{k=0}^{t-1} \overline{r}_k^2 (f(x_k) - f_{\ast}) \nonumber\\
\label{eq:12-c}
                         &\le \frac{\overline{r}_t^2}{\sum_{k=0}^{t-1} \overline{r}_k^2} 68 D G \sqrt{T} \theta_{t, \delta}.
\end{align}
We now use \Cref{lem:log-term-place} to conclude that there exists some $t \leq T$
        such that
\begin{align}
\label{eq:13}
  \frac{\overline{r}_t^2}{\sum_{k=0}^{t-1} \overline{r}_k^2} \leq \frac{e}{\left( \frac{T}{2 \log_+ \frac{r_k}{r_{\epsilon}}} - 1 \right)}
\end{align}
Note that by the fact that $\overline{r}_T \leq \overline{D}$,
        $r_0 = \underline{D}$, and that we assume
        $T \geq 4 \log_+ \frac{\overline{D}}{\underline{D}}$ (see the beginning of this
        proof)  we have
\begin{align*}
\frac{T}{2 \log_+ \frac{\overline{r}_T}{\overline{r}_0}} - 1 \geq \frac{T}{2 \log_+ \frac{\overline{D}}{\underline{D}}} - 1 \geq  \frac{T}{4 \log_+ \frac{\overline{D}}{\underline{D}}}.
\end{align*}
Plugging this into \cref{eq:13} we get
\begin{align*}
\frac{\overline{r}_t^2}{\sum_{k=0}^{t-1} \overline{r}_k^2} \leq \frac{4e}{T} \log_+ \frac{\overline{D}}{\underline{D}} \leq \frac{11}{T} \log_+ \frac{\overline{D}}{\underline{D}}.
\end{align*}
Using this in conjunction with \cref{eq:12-c} we thus have that for some $t \leq T$
\begin{align*}
f(\tilde{x}_t) - f_{\ast} \leq \frac{748 DG \theta_{T, \delta}}{\sqrt{T}} \log_+ \frac{\overline{D}}{\underline{D}}.
\end{align*}
\item If $f$ is $L$-smooth: Observe that by straightforward algebra, our
        assumption on the noise, and smoothness
\begin{align*}
  v_{t-1} &= \sum_{k=0}^{t-1} \bar{r}_k^2 \sqn{g_k} \\
&\leq 2 \sum_{k=0}^{t-1} \bar{r}_k^2 \sqn{g_k - \nabla f(x_k)}  + 2 \sum_{k=0}^{t-1} \bar{r}_k^2 \sqn{\nabla f(x_k)} \\
&\leq 2 \bar{r}_t^2 \sigma^2 T + 2 \sum_{k=0}^{t-1} \bar{r}_k^2 \sqn{\nabla f(x_k)} \\
          &\leq 2 \bar{r}_t^2 \sigma^2 T + 4 L \sum_{k=0}^{t-1} \bar{r}_k^2 (f(x_k) - f_{\ast}).
\end{align*}
Using the last line estimate in \cref{eq:18} with the triangle inequality we get
\begin{align*}
  \sum_{k=0}^{t-1} \bar{r}_k^2 \ev{\nabla f(x_k), x_k - x_{\ast}} &\leq 4 \bar{r}_t \left[ \bar{d}_t + \bar{r}_t \right] \left[ \bar{r}_t \sigma \sqrt{T} + \sqrt{L} \sqrt{\sum_{k=0}^{t-1} \bar{r}_k^2 (f(x_k) - f_{\ast})} \right] + 16 \bar{d}_t \bar{r}_t^2 \theta_{t, \delta} \sigma \sqrt{T}.
\end{align*}
By convexity we have
\begin{align*}
\ev{\nabla f(x_k), x_k - x_{\ast}} \geq f(x_k) - f_{\ast}.
\end{align*}
Therefore
\begin{align}
\label{eq:19}
  \sum_{k=0}^{t-1} \bar{r}_k^2 (f(x_k) - f_{\ast}) &\leq 4 \bar{r}_t \left[ \bar{d}_t + \bar{r}_t \right] \sqrt{L} \sqrt{\sum_{k=0}^{t-1} \bar{r}_k^2 (f(x_k) - f_{\ast})} + 20 \bar{r}_t^2  \theta_{t, \delta} \left[ \bar{d}_t + \bar{r}_t \right] \sigma \sqrt{T}.
\end{align}
Observe that if $y^2 \leq ay + b$, then we have shown in \cref{eq:10-c} that
$y^2 \leq a^2 + 2b$. Applying this to \cref{eq:19} with $a=4 \overline{r}_t \left[ \overline{d}_t + \overline{r}_t \right] \sqrt{L}$ and $b=20 \overline{r}_t^2 \theta_{t, \delta} \left[ \overline{d}_t + \overline{r}_t \right] \sigma \sqrt{T}$ gives
\begin{align*}
  \sum_{k=0}^{t-1} \bar{r}_k^2 (f(x_k) - f_{\ast}) &\leq 16 \bar{r}_t^2 \left[ \bar{d}_t + \bar{r}_t \right]^2 L + 40 \bar{r}_t^2 \theta_{t, \delta} \left[ \bar{d}_t + \bar{r}_t \right] \sigma \sqrt{T} \\
&= \bar{r}_t^2 \left( 16 \left[ \bar{d}_t + \bar{r}_t \right]^2 L + 40 \theta_{t, \delta} \left[ \bar{d}_t + \bar{r}_t \right] \sigma \sqrt{T} \right).
\end{align*}
Dividing both sides by $\sum_{k=0}^{t-1} \bar{r}_k^2$ and using Jensen's inequality we get
\begin{align*}
  f(\hat{x}_t) - f_{\ast} \leq \frac{1}{\sum_{k=0}^{t-1} \bar{r}_k^2} \sum_{k=0}^{t-1} \bar{r}_k^2 (f(x_k) - f_{\ast}) &\leq \frac{\bar{r}_t^2}{\sum_{k=0}^{t-1} \overline{r}_k^2}  \left( 16 \left[ \bar{d}_t + \bar{r}_t \right]^2 L + 40 \theta_{t, \delta} \left[ \bar{d}_t + \bar{r}_t \right] \sigma \sqrt{T} \right),
\end{align*}
where $\hat{x}_t = \frac{1}{\sum_{k=0}^{t-1} \overline{r}_k^2}  \sum_{k=0}^{t-1} \overline{r}_k^2 x_k$. We now use \Cref{lem:log-term-place} to conclude that there exists some $\tau \leq T$ such that
\begin{align*}
f(\hat{x}_{\tau}) - f_{\ast} \leq \frac{e}{\left( \frac{T}{2 \log_+ \frac{r_k}{r_{\epsilon}}} - 1 \right)} \left( 16 \left[ \bar{d}_t + \bar{r}_t \right]^2 L + 40 \theta_{\tau, \delta} \left[ \bar{d}_t + \bar{r}_t \right] \sigma \sqrt{T} \right).
\end{align*}
By assumption on $T$ we have $\frac{T}{2 \log_+ \frac{\overline{D}}{\underline{D}}} - 1 \geq \frac{T}{4 \log_+ \frac{\overline{D}}{\underline{D}}}$, therefore
\begin{align*}
f(\hat{x}_{\tau}) - f_{\ast} &\leq \frac{4e \log_+ \frac{\overline{D}}{\underline{D}}}{T} \left( 16 \left[ \bar{d}_t + \bar{r}_t \right]^2 L + 40 \theta_{\tau, \delta} \left[ \bar{d}_t + \bar{r}_t \right] \sigma \sqrt{T} \right) \\
&\leq 700 \theta_{T, \delta} \log_+ \frac{\overline{D}}{\underline{D}} \cdot \left( \frac{L D^2}{T} + \frac{\sigma D}{\sqrt{T}}  \right),
\end{align*}
where in the last line we used that $\max (\overline{d}_t, \overline{r}_t) \leq D$.
\end{itemize}
\end{proof}

\section{Proofs for \Cref{sec:tuning-free-optim-unbounded}}

\subsection{Proof of \Cref{prop:adaptive-polyak-deterministic}}
\adaptivepolyak*
\begin{proof}
By \citep[Theorem 2]{hazan19_revis_polyak_step_size} we have that the point returned by the algorithm $\overline{x}$ satisfies
\begin{align*}
f(\overline{x}) - f_{\ast} \leq \begin{cases}
                           \frac{2GD_{\ast}}{\sqrt{T}} \log_+ \frac{f(x_{\ast}) - \hat{f}_0}{\frac{G D_{\ast}}{\sqrt{T}}} & \text { if  $f$ is $G$-Lipschitz, } \\
                           \frac{2 L D_{\ast}^2}{T} \log_+ \frac{f(x_{\ast}) - \hat{f}_0}{\frac{L D_{\ast}^2}{T}}  & \text { if $f$ is $L$-smooth. }
                          \end{cases}
\end{align*}
provided that $\hat{f}_0 \leq f_{\ast}$, where $\hat{f}_0$ is a parameter supplied to
the algorithm. To get a valid lower bound on $f_{\ast}$, observe that by the
convexity of $f$ we have
\begin{align*}
f(x_0) - f_{\ast} \leq \ev{\nabla f(x_0), x_0 - x_{\ast}} \leq \norm{\nabla f(x_0)} \norm{x_0 - x_{\ast}} \leq \norm{\nabla f(x_0)} \overline{D}.
\end{align*}
It follows that
\begin{align*}
f_{\ast} \geq f(x_0) - \norm{\nabla f(x_0)} \overline{D}.
\end{align*}
And thus we can use $\hat{f}_0 = f(x_0) - \norm{\nabla f(x_0)} \overline{D}$.
\end{proof}

\subsection{Proof of \Cref{prop:dog-dowg-deterministic}}

\dogdowgdetermenistic*
\begin{proof}

This is shown in \citep[Supplementary material section 7]{khaled23_dowg_unleas}
for DoWG. The proof for DoG is similar and we omit it for simplicity.
\end{proof}

\subsection{Proof of \Cref{thm:impossibility-result-smooth}}
\label{sec:proof-impossibility-smooth}
\begin{proof}
Let $\sigma > 0$. Let $L = \sigma T$. Define the functions
\begin{align*}
f_1 (x) &\eqdef \frac{L}{2} x^2 + \sigma x \\
f_2 (x) &\eqdef \frac{L}{2} x^2 - \frac{\sigma}{T-1} x \\
f(x) &\eqdef \frac{1}{T} f_1 (x) + \left( 1 - \frac{1}{T}  \right) f_2 (x) \\
&= \frac{L}{2} x^2.
\end{align*}
 We shall consider the stochastic oracle $\mathcal{O}(f, \sigma_f)$ that returns function values and gradients as follows:
\begin{align*}
\mathcal{O} (f, \sigma_f) (x) \eqdef \left\{ f_z (x), \nabla f_z (x) \right\} &= \begin{cases}
         \left\{ f_1 (x), \nabla f_1 (x) \right\}  & \text { with probability } \frac{1}{T},  \\
         \left\{ f_2 (x), \nabla f_2 (x) \right\}  & \text { with probability } 1 - \frac{1}{T}.
        \end{cases}
\end{align*}
Clearly we have $\ec{ f_z (x) } = f(x)$ and $\ec{ \nabla f_z (x) } = \nabla f(x)$. Moreover,
\begin{align*}
\norm{\nabla f_1 - \nabla f(x)} = \sigma, && \norm{\nabla f_2 (x) - \nabla f(x)} = \frac{\sigma}{T-1} \leq \sigma.
\end{align*}
It follows that $\sigma_f \leq \sigma$. Therefore $\mathcal{O}(f, \sigma_f)$ is a valid stochastic
first-order oracle. This oracle is similar to the one used by
\citet{attia23_sgd_with_adagr_steps} in their lower bound on the convergence of AdaGrad-Norm. The minimizer of the function $f$ is clearly $x_{\ast}^f = 0$.

Let $u \geq 0$, we shall choose it later. Define
\begin{align*}
h_1 (x) &\eqdef  \frac{L}{2} (x-u)^2 + (\sigma - (T-1) Lu)x + \frac{(T-1)L}{2} u^2, \\
h_2 (x) &\eqdef \frac{L}{2} (x-u)^2 + Lux - \frac{\sigma}{T-1} x - \frac{L}{2} u^2, \\
h(x) &\eqdef \frac{L}{2} \left (x - u \right)^2.
\end{align*}
with the oracle $\mathcal{O} (h, \sigma_h)$ given by
\begin{align*}
\mathcal{O} (h, \sigma_h) (x) \eqdef \left\{ h_z (x), \nabla h_z (x) \right\} &= \begin{cases}
         \left\{ h_1 (x), \nabla h_1 (x) \right\}  & \text { with probability } \frac{1}{T},  \\
         \left\{ h_2 (x), \nabla h_2 (x) \right\}  & \text { with probability } 1 - \frac{1}{T}.
        \end{cases}
\end{align*}
Observe that
\begin{align*}
\begin{split}
\ec{ h_z (x) } &= \frac{1}{T} \left[ \frac{L}{2} (x-u)^2 + \sigma x - (T-1) Lux + \frac{(T-1) L}{2} u^2 \right] \\
&\qquad + \frac{T-1}{T} \left[ \frac{L}{2} (x-u)^2 + Lux - \frac{\sigma x}{T-1} - \frac{L}{2} u^2  \right]
\end{split} \\
&= \frac{L}{2} (x-u)^2 + \frac{\sigma x}{T} - \frac{T-1}{T} Lux + \frac{T-1}{T} \frac{L}{2} u^2 + \frac{T-1}{T} Lux - \frac{\sigma x}{T} - \frac{T-1}{T} \frac{L}{2} u^2 \\
&= h(x).
\end{align*}
We can similarly prove that $\ec{ \nabla h_z (x) } = h(x)$. Moreover,
\begin{align*}
\norm{\nabla h_1 (x) - \nabla h(x)} = \norm{\sigma - (T-1) Lu} &\leq \sigma + (T-1) Lu, \\
\norm{\nabla h_2 (x) - \nabla h(x)} = \norm{\frac{-\sigma}{T-1} + Lu} &\leq \frac{\sigma}{T-1}  + Lu.
\end{align*}
It follows that $\sigma_h \leq \sigma + (T-1) Lu$, therefore $\mathcal{O}(h, \sigma_h)$ is a valid stochastic oracle. Finally, observe that the minimizer of $h$ is $x_{\ast}^h = u$.

We fix the initialization $x_0 = v > 0$. Then the initial distance from the optimum for both $f$ and $h$ are:
\begin{align}
\label{eq:lb-dist-estimates}
D_{\ast} (f) = \abs{v - 0} = v, && D_{\ast} (h) = \abs{v-u}.
\end{align}
And recall that
\begin{align}
\label{eq:lb-sigma-estimates}
\sigma_f \leq \sigma, && \sigma_h \leq \sigma + (T-1) Lu.
\end{align}
Observe that both $f$ and $h$ share the same smoothness constant $L$. We supply the algorithm with the following estimates:
\begin{align}
\arraycolsep=0.1\textwidth
\begin{array}{cc}
\underline{L} =  L, & \overline{L} = L, \\
\underline{D} = \min(v, \abs{u-v}), & \overline{D} = \max(v, \abs{u-v}), \\
\underline{\sigma} = \sigma,  & \overline{\sigma} = \sigma + TLu.
\end{array}
\label{eq:lb-hints}
\end{align}
We note that in light of \cref{eq:lb-dist-estimates,eq:lb-sigma-estimates} and
the definitions of $f$ and $h$, the hints given by \cref{eq:lb-hints} are valid for both problems. Now observe the following:
\begin{align*}
h_2 (x) &= \frac{L}{2} (x-u)^2 + Lux - \frac{\sigma}{T-1} x - \frac{L}{2} u^2 \\
&= \frac{L}{2} (x^2 - 2 ux + u^2) + Lux - \frac{\sigma}{T-1} x - \frac{L}{2} u^2 \\
&= \frac{L}{2} x^2 - \frac{\sigma}{T-1} x\\
&= f_2 (x).
\end{align*}
And by the linearity of expectation we have that $\nabla h_2 (x) = \nabla f_2 (x)$. Therefore both oracles $\mathcal{O}(f, \sigma_f)$ and $\mathcal{O}(h, \sigma_h)$ return the same stochastic gradient and stochastic function values with probability $1 - \frac{1}{T}$.

We thus have that over a run of $T$ steps, with probability $(1 - \frac{1}{T})^T \approx e^{-1}$ the algorithm will only get the evaluations $\left\{ h_2 (x), \nabla h_2 (x) \right\}$ from either oracle, and will get the same hints defined in~\cref{eq:lb-hints}. In this setting, the algorithm cannot distinguish whether it is minimizing $h$ or minimizing $f$, and therefore must minimize both. This is the main idea behind this proof: we use that the algorithm is tuning-free, which gives us that the output of the algorithm $x_{\mathrm{out}}$ satisfies with probability $1-\delta$
\begin{align}
\label{eq:10}
h(x_{\mathrm{out}}) - h_{\ast} &\leq c \cdot \mathrm{poly} \left (\log_+ \frac{\overline{L}}{\underline{L}}, \log_+ \frac{\overline{\sigma}}{\underline{\sigma}}, \log_+ \frac{\overline{D}}{\underline{D}}, \log \frac{1}{\delta}, \log T \right) \left( \frac{L D_{\ast}(h)^2}{T} + \frac{\sigma_h D_{\ast} (h)}{\sqrt{T}}   \right).
\end{align}
We shall let $\iota \eqdef \mathrm{poly} \left (\log_+ \frac{\overline{L}}{\underline{L}}, \log_+ \frac{\overline{\sigma}}{\underline{\sigma}}, \log_+ \frac{\overline{D}}{\underline{D}}, \log \frac{1}{\delta}, \log T \right)$ and note that because all of the relevant parameters (the hints, the horizon $T$, and the probability $\delta$) supplied to the algorithm are unchanged for $h$ and $f$, this $\iota$ will be the same for $h$ and $f$. Continuing from \cref{eq:10} and substituting the expressions for $D_{\ast} (h)$ and $\sigma_h$ from \cref{eq:lb-dist-estimates,eq:lb-sigma-estimates} we get
\begin{align*}
h(x_{\mathrm{out}}) - h_{\ast} &\leq c \iota \left( \frac{L (u-v)^2}{T}  + \frac{(\sigma + (T-1) L u) \abs{u-v}}{\sqrt{T}} \right) \\
&\leq c \iota \left( \frac{L (u-v)^2}{T} + \frac{\sigma \abs{u-v}}{\sqrt{T}} + \sqrt{T} L u \abs{u-v}   \right).
\end{align*}
Using the definition of $h$ and the fact that $h_{\ast} = 0$ we have
\begin{align*}
\frac{L}{2} \sqn{x_{\mathrm{out}} - u} &\leq c \iota \left( \frac{L (u-v)^2}{T} + \frac{\sigma \abs{u-v}}{\sqrt{T}} + \sqrt{T} L u \abs{u-v}   \right).
\end{align*}
Multiplying both sides by $\frac{2}{L}$ and then using the definition $L = \sigma T$ we get
\begin{align*}
\sqn{x_{\mathrm{out}} - u} &\leq 2 c \iota \left( \frac{(u-v)^2}{T} + \frac{\sigma \abs{u-v}}{\sqrt{T} L}  + \sqrt{T} u \abs{u-v} \right) \\
&= 2 c \iota \left( \frac{(u-v)^2}{T} + \frac{\abs{u-v}}{T^{\frac{3}{2}}}  + \sqrt{T} u \abs{u-v} \right)
\end{align*}
This gives by taking square roots and using the triangle inequality
\begin{align*}
\abs{x_{\mathrm{out}} - u} \leq \sqrt{2 c \iota} \left( \abs{u-v} T^{-\frac{1}{2}} + \sqrt{\abs{u-v}} T^{-\frac{3}{4}} + T^{\frac{1}{4}} \sqrt{u \abs{u-v}} \right).
\end{align*}
And finally this implies
\begin{align}
\label{eq:12}
x_{\mathrm{out}} \geq u - \sqrt{2 c \iota} \left( \abs{u-v} T^{-\frac{1}{2}} + \sqrt{\abs{u-v}} T^{-\frac{3}{4}} + T^{\frac{1}{4}} \sqrt{u \abs{u-v}} \right).
\end{align}

Similarly, applying the tuning-free guarantees to $f$ and using that $D_{\ast} (f) = v$ we have
\begin{align*}
\frac{L}{2} \sqn{x_{\mathrm{out}}} = f(x_{\mathrm{out}}) - f_{\ast} \leq c \iota \left( \frac{L D_{\ast} (f)^2}{T} + \frac{\sigma D_{\ast} (f)}{\sqrt{T}}  \right) = c \iota \left( \frac{L v^2}{T} + \frac{\sigma v}{\sqrt{T}}  \right)
\end{align*}
This gives
\begin{align*}
\sqn{x_{\mathrm{out}}} \leq 2 c \iota \left( \frac{v^2}{T} + \frac{\sigma v}{\sqrt{T} L}   \right) = 2 c \iota \left( \frac{v^2}{T} + \frac{v}{T^{\frac{3}{2}}} \right).
\end{align*}
Which gives
\begin{align}
\label{eq:11}
x_{\mathrm{out}} \leq \sqrt{2 c \iota} \left( \frac{v}{\sqrt{T}} + \frac{\sqrt{v}}{T^{\frac{3}{4}}}  \right)
\end{align}
Now let us consider the difference between the lower bound on $x_{\mathrm{out}}$ given by \cref{eq:12} and the upper bound given by \cref{eq:11},
\begin{align}
\label{eq:14}
u - \sqrt{2 c \iota} \left( \abs{u-v} T^{-\frac{1}{2}} + \sqrt{\abs{u-v}} T^{-\frac{3}{4}} + T^{\frac{1}{4}} \sqrt{u \abs{u-v}} \right) - \sqrt{2 c \iota} \left( \frac{v}{\sqrt{T}} + \frac{v}{T^{\frac{3}{4}}}  \right)
\end{align}
Let us put $u=v+1$ and $v=T^2$, then \cref{eq:14} becomes
\begin{align}
\label{eq:9-a}
\left( T^2 + 1 \right) - \sqrt{2 c \iota} \left( T^{\frac{-1}{2}} + T^{\frac{-3}{4}} + T^{\frac{1}{4}} \sqrt{T^2+1} \right) - \sqrt{2 c \iota} \left( T^{2-\frac{1}{2}} + T^{2-\frac{3}{4}} \right).
\end{align}
Now observe that
\begin{align*}
\iota &= \mathrm{poly} \left( \log_+ \frac{\overline{L}}{\underline{L}}, \log_+ \frac{\overline{D}}{\underline{D}}, \log_+ \frac{\sigma + TLu}{\sigma}, \log_+ \frac{1}{\delta}, \log T \right) \\
&= \mathrm{poly} \left( \log_+ 1, \log_+ T^2, \log_+ (1+T^2+T^4), \log_+ \frac{1}{\delta}, \log_+ T \right) \\
&= \mathrm{poly} \left( \log_+ T, \log_+ \frac{1}{\delta} \right).
\end{align*}
We set $\delta = \frac{e^{-1}}{4}$, therefore we finally get that $\iota = \mathrm{poly}(\log T)$, plugging back into \cref{eq:9-a} we get that the difference between the lower bound of \cref{eq:12} and the upper bound of \cref{eq:11} is
\begin{align*}
\left( T^2 + 1 \right) - \sqrt{2 c \mathrm{poly}(\log T)} \left( T^{\frac{-1}{2}} + T^{\frac{-3}{4}} + T^{\frac{1}{4}} \sqrt{T^2+1} \right) - \sqrt{2 c \mathrm{poly}(\log T)} \left( T^{2-\frac{1}{2}} + T^{2-\frac{3}{4}} \right).
\end{align*}
It is obvious that for large enough $T$, this expression is positive. Moreover, this situation happens with a positive probability of at least $\frac{e^{-1}}{2}$ since by the union bound
\begin{align*}
\mathrm{Prob}(\text{Algorithm incorrect for $f, h$} \cup \text{Oracle doesn't output all $\left\{ h_2, \nabla h_2 \right\}$}) &\leq 2\delta + \left (1-(1-\frac{1}{T})^T \right) \\
&\lessapprox 1-\frac{e^{-1}}{2}.
\end{align*}
By contradiction, it follows that no algorithm can be tuning-free.
\end{proof}

\subsection{Proof of \Cref{thm:impossibility-result-non-smooth}}
\begin{proof}
We consider the following functions
\begin{align*}
f (x) &= G \abs{x}, \\
f_1 (x) &= G \abs{x} + G x, \\
f_2 (x) &= G \abs{x} - \frac{G}{T-1} x.
\end{align*}
We consider the stochastic oracle $\mathcal{O}(f, \sigma_f)$ that returns function values and gradients as follows:
\begin{align*}
\mathcal{O} (f, \sigma_f) (x) \eqdef \left\{ f_z (x), \nabla f_z (x) \right\} &= \begin{cases}
         \left\{ f_1 (x), \nabla f_1 (x) \right\}  & \text { with probability } \frac{1}{T},  \\
         \left\{ f_2 (x), \nabla f_2 (x) \right\}  & \text { with probability } 1 - \frac{1}{T}.
        \end{cases}
\end{align*}
Clearly we have $\ec{ f_z (x) } = f(x)$ and $\ec{ \nabla f_z (x) } = \nabla f(x)$. It is
also not difficult to prove that $\norm{\nabla f(x) - \nabla f_z (x)} \leq G$. We define a
second function
\begin{align}
\begin{split}
h(x) &= G \abs{x-u}, \\
h_1 (x) &= (2-T) G \abs{x-u} - (T-1) G \abs{x} + G x, \\
h_2 (x) &= G \abs{x} - \frac{G}{T-1} x.
\end{split}\label{eq:42}
\end{align}
And we shall use the oracle $\mathcal{O} (h, \sigma_h)$ given by
\begin{align*}
\mathcal{O} (h, \sigma_h) (x) \eqdef \left\{ h_z (x), \nabla h_z (x) \right\} &= \begin{cases}
         \left\{ h_1 (x), \nabla h_1 (x) \right\}  & \text { with probability } \frac{1}{T},  \\
         \left\{ h_2 (x), \nabla h_2 (x) \right\}  & \text { with probability } 1 - \frac{1}{T}.
        \end{cases}
\end{align*}
By direct computation we have that $\ec{h_z (x)} = h(x)$ and
$\ec{\nabla h_z (x)} = \nabla h(x)$. From the definition of the functions in~\cref{eq:42}
it is immediate that all the gradients and stochastic gradients are bounded by
$GT$. It follows that $\sigma_h \leq G T$. All in all, this shows $\mathcal{O}(h, \sigma_h)$ is a valid
stochastic oracle.

We set $x_0 = 1$, observe that, like in \Cref{thm:impossibility-result-smooth},
with some small but constant probability both oracles return the same gradients
and function values, and therefore the algorithm cannot distinguish between
them. It is therefore forced to approximately minimize both, giving us the guarantee:
\begin{align*}
f(x_{\mathrm{out}}) - f_{\ast} &\leq c \cdot \iota \cdot \frac{G}{\sqrt{T}} \\
h(x_{\mathrm{out}}) - h_{\ast} &\leq c \cdot \iota \cdot \frac{(GT) \abs{1-u}}{\sqrt{T}} = c \iota \abs{1-u} G \sqrt{T}
\end{align*}
This gives
\begin{align}
\label{eq:25}
\abs{x_{\mathrm{out}}} &\leq \frac{c \iota}{\sqrt{T}} \\
\nonumber
\abs{x_{\mathrm{out}} - u} &\leq c \iota \abs{1-u} \sqrt{T}
\end{align}
Let us put $u = 1 - \frac{1}{T}$, then
\begin{align*}
\abs{x - \left( 1 - \frac{1}{T} \right)} &\leq \frac{c \iota}{\sqrt{T}}
\end{align*}
This implies
\begin{align}
\label{eq:26}
  x_{\mathrm{out}} &\geq 1 - \frac{1}{T} - \frac{c \iota}{\sqrt{T}}
\end{align}
And \cref{eq:25} implies
\begin{align}
\label{eq:27}
x_{\mathrm{out}} \leq \frac{c \iota}{\sqrt{T}}
\end{align}
Because $\iota = \mathrm{poly}(\log T)$ (by direct computation), we have that the lower bound on $x_{\mathrm{out}}$ given by \cref{eq:26} exceeds the upper bound on the same iterate given by \cref{eq:27} as $T$ becomes large enough, and we get our contradiction.
\end{proof}

\section{Proofs for \Cref{sec:what-noise-distr}}

We have the two following algorithm-independent lemmas:

\begin{lemma}\label{lem:bernstein-to-hoeffding}
Suppose that $Y$ is a sub-exponential random variable (see \Cref{def:sub-exponential}) with mean $0$ and
sub-exponential modulus $R^2$, i.e.
for all $t > 0$
\begin{align*}
\pr[ \abs{Y} \geq t ] \leq 2 \exp\left (-\frac{t}{R^2} \right).
\end{align*}
Let $Y_1, \ldots, Y_n$ be i.i.d. copies of $Y$. Then with probability $1-\delta$ it holds that
\begin{align*}
  \abs{\frac{1}{n} \sum_{i=1}^n Y_i} \leq c R^2 \left[ \sqrt{\frac{1}{n} \log \frac{2}{\delta}} + \frac{1}{n} \log \frac{2}{\delta} \right],
\end{align*}
where $c > 0$ is an absolute constant.
\end{lemma}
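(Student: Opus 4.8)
The plan is to reduce the statement to a standard Bernstein-type concentration inequality for sums of independent sub-exponential random variables, so that the only real work is constant bookkeeping. First I would convert the given tail bound into control of the moment generating function: the assumption $\pr[\abs{Y} \geq t] \leq 2\exp(-t/R^2)$ is equivalent, up to absolute constants, to the sub-exponential norm bound $\norm{Y}_{\psi_1} \leq c_1 R^2$ (the usual chain of equivalences between tail decay, moment growth, and MGF behaviour for sub-exponential variables; cf.\ \Cref{def:sub-exponential}). Combined with $\ec{Y} = 0$ this yields a bound of the form
\[
  \ec{e^{\lambda Y}} \leq \exp\!\left( c_2 \lambda^2 R^4 \right) \qquad \text{for all } \abs{\lambda} \leq \frac{1}{c_3 R^2}.
\]

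Next I would apply the Chernoff method to the average $S_n \eqdef \tfrac1n \sum_{i=1}^n Y_i$. By independence, $\ec{e^{\lambda S_n}} = \prod_{i=1}^n \ec{e^{(\lambda/n) Y_i}} \leq \exp(c_2 \lambda^2 R^4 / n)$ whenever $\abs{\lambda} \leq n/(c_3 R^2)$, so that $\pr[S_n \geq t] \leq \exp(c_2 \lambda^2 R^4/n - \lambda t)$. Optimizing $\lambda$ over the admissible range produces a two-regime bound: when $t$ is small relative to $R^2$ the unconstrained optimum $\lambda \asymp nt/R^4$ is feasible and yields the sub-Gaussian tail $\exp(-c_4 n t^2 / R^4)$, whereas when $t$ is large we take $\lambda$ at the boundary $\asymp n/R^2$ and obtain the sub-exponential tail $\exp(-c_5 n t / R^2)$. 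Applying the same argument to $-Y$ and union bounding the two tails gives
\[
  \pr[\abs{S_n} \geq t] \leq 2 \exp\!\left( -c_6 \, n \min\!\left\{ \frac{t^2}{R^4}, \frac{t}{R^2} \right\} \right).
\]

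Finally I would invert this bound. Setting $t = c R^2 \big( \sqrt{\tfrac1n \log \tfrac2\delta} + \tfrac1n \log \tfrac2\delta \big)$ for a large enough absolute constant $c$, one checks directly that $t/R^2 \geq c \sqrt{\tfrac1n \log\tfrac2\delta}$ and $t/R^2 \geq c\,\tfrac1n \log\tfrac2\delta$ simultaneously, hence $n \min\{t^2/R^4, t/R^2\} \geq \min\{c^2, c\}\, \log\tfrac2\delta \geq c_6^{-1}\log\tfrac2\delta$, which makes the right-hand side above at most $2\exp(-\log\tfrac2\delta) = \delta$; this is exactly the claimed inequality. The only mildly delicate point is tracking constants through the constrained Chernoff optimization and the case split in the last step. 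Alternatively, one can simply cite a packaged Bernstein inequality for sub-exponential sums (e.g.\ Vershynin's Theorem~2.8.1, or Wainwright) with sub-exponential parameter $R^2$, which subsumes the second and third paragraphs entirely; I expect no genuine obstacle here.
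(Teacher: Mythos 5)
Your proposal is correct and matches the paper's argument: the paper likewise invokes Bernstein's inequality for sub-exponential sums (Vershynin, Corollary~2.8.3) with parameter $R^2$ and then inverts the two-regime tail, differing only in that it picks $t$ via a case split on whether $\tfrac{1}{cn}\log\tfrac{2}{\delta}$ exceeds $1$ rather than taking the sum of the two terms and lower-bounding the minimum as you do. Both inversions are standard and equivalent up to the absolute constant $c$.
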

\begin{proof}
By Bernstein's inequality \citep[Corollary 2.8.3]{vershynin18_high_dim_prob} we have
\begin{align*}
  \pr[ \abs{\frac{1}{n} \sum_{i=1}^n Y_i} \geq t ] \leq 2 \exp \left[ - c \min \left( \frac{t^2}{R^4}, \frac{t}{R^2}  \right) n \right],
\end{align*}
for some $c > 0$. Let us set $t$ as follows
\begin{align*}
t &= \begin{cases}
      R^2 \sqrt{\frac{1}{cn} \log \frac{2}{\delta}}  & \text { if } \frac{1}{cn} \log \frac{2}{\delta} < 1,  \\
      R^2 \left[ \frac{1}{cn} \log \frac{2}{\delta} \right] & \text { if } \frac{1}{cn} \log \frac{2}{\delta} \geq 1.
     \end{cases}
\end{align*}
Then
\begin{align*}
\begin{split}
\frac{t}{R^2} &= \begin{cases}
                 \sqrt{\frac{1}{cn} \log \frac{2}{\delta}}  & \text { if } \frac{1}{cn} \log \frac{2}{\delta} < 1, \\
                 \left[ \frac{1}{cn} \log \frac{2}{\delta} \right]  & \text { if }  \frac{1}{cn} \log \frac{2}{\delta} \geq 1.
                 \end{cases}
\end{split}, &&
\frac{t^2}{R^4} = \begin{cases}
\frac{1}{cn} \log \frac{2}{\delta}  & \text { if }  \frac{1}{cn} \log \frac{2}{\delta} < 1, \\
\left[ \frac{1}{cn} \log \frac{2}{\delta}  \right]^2  & \text { if } \frac{1}{cn} \log \frac{2}{\delta} \geq 1.
\end{cases}
\end{align*}
By combining the two cases above we get
\begin{align*}
\min \left( \frac{t}{R^2}, \frac{t^2}{R^4} \right) &= \frac{1}{cn} \log \frac{2}{\delta}.
\end{align*}
Therefore
\begin{align*}
2 \exp \left[ - c \min \left( \frac{t^2}{R^4}, \frac{t}{R^2}  \right) n \right] &= \delta.
\end{align*}
It follows that with probability at least $1-\delta$ we have,
\begin{align*}
\abs{\frac{1}{n} \sum_{i=1}^n Y_i} &\leq \begin{cases}
      R^2 \sqrt{\frac{1}{cn} \log \frac{2}{\delta}}  & \text { if } \frac{1}{cn} \log \frac{2}{\delta} < 1,  \\
      R^2 \left[ \frac{1}{cn} \log \frac{2}{\delta} \right] & \text { if } \frac{1}{cn} \log \frac{2}{\delta} \geq 1.
     \end{cases} \\
&\leq R^2 \left[ \sqrt{\frac{1}{cn} \log \frac{2}{\delta}} + \frac{1}{cn} \log \frac{2}{\delta} \right].  \qedhere
\end{align*}
\end{proof}

Recall the definition of sub-exponential random variables:
\begin{definition}\label{def:sub-exponential}
We call a random variable $Y$ $R$-sub-exponential if
\begin{align*}
\pr[ \abs{Y} \geq t ] \leq 2 \exp \left( \frac{-t}{R} \right)
\end{align*}
for all $t \geq 0$.
\end{definition}

\begin{definition}\label{def:sub-gaussian}
We call a random variable $Y$ $R$-sub-exponential if
\begin{align*}
\pr[ \abs{Y} \geq t ] \leq 2 \exp \left( \frac{-t^2}{R^2} \right)
\end{align*}
for all $t \geq 0$.
\end{definition}

\begin{lemma}\label{lem:sub-exp-sub-gauss}
\citep[Lemma 2.7.7]{vershynin18_high_dim_prob}
A random variable $Y$ is $R$-sub-gaussian if and only if $Y^2$ is $R^2$-sub-exponential.
\end{lemma}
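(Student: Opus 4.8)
The plan is to prove the equivalence directly from the tail-based definitions fixed in \Cref{def:sub-exponential,def:sub-gaussian}, using only the elementary set identity $\{Y^2 \geq s\} = \{\abs{Y} \geq \sqrt{s}\}$, which holds for every $s \geq 0$ because $Y^2 \geq 0$. The point is that under these particular normalizations (rather than the moment-generating-function characterizations, which are equivalent only up to absolute constants), the pairing of the sub-gaussian modulus $R$ with the sub-exponential modulus $R^2$ is exact, so the statement reduces to a one-line change of variables in each direction.

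First I would treat the forward direction. Assume $Y$ is $R$-sub-gaussian, i.e.\ $\pr[\abs{Y} \geq t] \leq 2 \exp(-t^2 / R^2)$ for all $t \geq 0$. Fixing $s \geq 0$ and substituting $t = \sqrt{s}$ gives
\[
\pr[\abs{Y^2} \geq s] = \pr[\abs{Y} \geq \sqrt{s}] \leq 2 \exp\!\left( -\frac{s}{R^2} \right),
\]
where we used $\abs{Y^2} = Y^2$; this is exactly the statement that $Y^2$ is $R^2$-sub-exponential.

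For the converse, I would assume $Y^2$ is $R^2$-sub-exponential, i.e.\ $\pr[\abs{Y^2} \geq s] \leq 2 \exp(-s / R^2)$ for all $s \geq 0$. Fixing $t \geq 0$ and substituting $s = t^2$ gives
\[
\pr[\abs{Y} \geq t] = \pr[Y^2 \geq t^2] \leq 2 \exp\!\left( -\frac{t^2}{R^2} \right),
\]
so $Y$ is $R$-sub-gaussian. Since each direction is an immediate substitution, there is no genuine obstacle here; the only thing worth emphasizing is that the argument relies on the specific tail normalizations of \Cref{def:sub-exponential,def:sub-gaussian} rather than on the equivalence-of-norms version stated in \citep[Lemma 2.7.7]{vershynin18_high_dim_prob}, whose proof compares several characterizations and necessarily incurs absolute constants.
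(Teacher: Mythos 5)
Your proof is correct. The paper offers no argument of its own here: the lemma is stated purely as a citation to \citep[Lemma 2.7.7]{vershynin18_high_dim_prob}, so your self-contained verification from the paper's tail-based \Cref{def:sub-exponential,def:sub-gaussian} is a genuinely different route, and arguably the more appropriate one, since the paper's definitions are tail bounds with explicit constants $2\exp(-t^2/R^2)$ and $2\exp(-t/R)$ rather than Orlicz norms, and your substitution $t \mapsto \sqrt{s}$ shows the correspondence $R \leftrightarrow R^2$ is exact under these normalizations (note the paper's \Cref{def:sub-gaussian} contains a typo, saying ``sub-exponential'' where it means ``sub-gaussian''; your reading is the intended one). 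One small correction to your closing remark: Vershynin's Lemma 2.7.7 as stated in the book is itself exact, asserting $\norm{X^2}_{\psi_1} = \norm{X}_{\psi_2}^2$ with no absolute constants; the constants you allude to arise only when passing between the Orlicz-norm characterization and the tail-bound characterization used in this paper, not within Lemma 2.7.7 itself. This does not affect the validity of your argument, which stands on its own.
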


\begin{lemma}\label{lem:centering-sub-exponential}
\citep[Exercise 2.7.10]{vershynin18_high_dim_prob} If $A$ is $E$-sub-exponential then $A-\ec{ A }$ is $c \cdot E$-sub-exponential for
some absolute constant $c$.
\end{lemma}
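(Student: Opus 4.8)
The plan is to argue directly from the tail bound, without routing through the $\psi_1$-norm machinery. First I would control the mean by integrating the tail bound of the hypothesis:
\[
\abs{\ec{A}} \leq \ec{\abs{A}} = \int_0^{\infty} \pr[\abs{A} \geq t]\,dt \leq \int_0^{\infty} 2 \exp(-t/E)\,dt = 2E,
\]
which along the way confirms that $\ec{A}$ is finite (so the statement is well-posed) and gives $\abs{\ec{A}} \leq 2E$.

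Next I would apply the pointwise triangle inequality $\abs{A - \ec{A}} \leq \abs{A} + \abs{\ec{A}} \leq \abs{A} + 2E$, which yields for every $t \geq 0$ that $\pr[\abs{A - \ec{A}} \geq t] \leq \pr[\abs{A} \geq t - 2E]$. Then I would split on the size of $t$. For $t \geq 4E$ one has $t - 2E \geq t/2 \geq 0$, so the hypothesis gives $\pr[\abs{A} \geq t - 2E] \leq 2\exp(-(t-2E)/E) \leq 2\exp(-t/(2E)) \leq 2\exp(-t/(6E))$. For $t < 4E$ the target bound $2\exp(-t/(6E))$ already exceeds $2\exp(-2/3) > 1$ and hence dominates the probability trivially. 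Combining the two regimes gives $\pr[\abs{A - \ec{A}} \geq t] \leq 2\exp(-t/(6E))$ for all $t \geq 0$, i.e. $A - \ec{A}$ is $6E$-sub-exponential, so the lemma holds with $c = 6$ (any $c > 4/\log 2$ works).

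There is no deep obstacle here; the one point that deserves attention — and the reason we cannot simply take $c = 2$ — is the regime $t < 4E$, where the raw bound $2\exp(-(t-2E)/E)$ coming out of the triangle-inequality step can be larger than $1$ and therefore vacuous, so the absolute constant must be enlarged just enough that the target tail bound itself stays above $1$ on that interval. An alternative route, closer to the cited Vershynin exercise, would be to use that the hypothesis is equivalent up to absolute constants to $\|A\|_{\psi_1} \leq C_1 E$, note $\abs{\ec{A}} \leq C_2 \|A\|_{\psi_1}$, apply the triangle inequality for the $\psi_1$-norm to get $\|A - \ec{A}\|_{\psi_1} \leq (1 + C_2)\|A\|_{\psi_1}$, and convert back to a tail bound; this pushes the constant-chasing into the standard norm-equivalence lemmas but reaches the same conclusion.
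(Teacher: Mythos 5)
Your proof is correct, and it is worth noting that the paper itself does not actually prove this lemma: it is stated as an imported fact with a bare citation to \citet[Exercise 2.7.10]{vershynin18_high_dim_prob}, so your argument supplies a proof where the paper supplies none. The route Vershynin's exercise has in mind is the one you sketch at the end — pass to the Orlicz $\psi_1$-norm via the standard equivalence with the tail-bound definition, bound $\abs{\ec{A}}$ by a constant times $\norm{A}_{\psi_1}$, apply the triangle inequality for the norm, and convert back — which is modular but buries the constant in two equivalence lemmas. Your primary argument is more elementary and self-contained: integrating the tail to get $\abs{\ec{A}} \le 2E$, shifting by the triangle inequality, and then splitting at $t = 4E$ so that the target bound is vacuously true on the small-$t$ regime is exactly the right fix for the fact that $2\exp(-(t-2E)/E)$ exceeds $1$ there, and it yields the explicit constant $c=6$ (indeed any $c > 4/\log 2$). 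The only cosmetic caveat is that the constant is not optimized, but the lemma only asks for some absolute $c$, so this is immaterial; every step checks out against the paper's \Cref{def:sub-exponential}.
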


\begin{lemma}\label{lem:convergence-of-sample-variance}
Suppose that $X$ is a random variable that satisfies the assumptions in
\Cref{def:signal-to-noise} and $X_1, \ldots, X_n$ are all i.i.d. copies of $X$. Then
with probability $1-\delta$ we have that
\begin{align*}
  \abs{\sum_{i=1}^n (\sqn{X_i} - \sigma^2)} \leq c \cdot \sigma^2 \cdot K_{\mathrm{snr}}^{-2} \left[ \sqrt{n \log \frac{1}{\delta}} + \log \frac{1}{\delta} \right].
\end{align*}
\end{lemma}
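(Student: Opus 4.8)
The plan is to reduce the statement to the algorithm-independent Bernstein bound of \Cref{lem:bernstein-to-hoeffding} by passing through the squared noise norm. First I would unpack the hypothesis: \Cref{def:signal-to-noise} asks that $X$ satisfy \Cref{asm:subg-noise}, so $\norm{X}$ has a sub-gaussian tail, $\pr[\norm{X} \geq t] \leq 2 \exp(-t^2/(2R^2))$ for all $t \geq 0$, and $\ec{\sqn{X}} = \sigma^2$. Up to a constant factor in the modulus this is exactly the form in \Cref{def:sub-gaussian}, so $\norm{X}$ is $O(R)$-sub-gaussian.

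Next, \Cref{lem:sub-exp-sub-gauss} turns this into the statement that $\sqn{X} = \norm{X}^2$ is $O(R^2)$-sub-exponential. Since its mean is $\ec{\sqn{X}} = \sigma^2$, \Cref{lem:centering-sub-exponential} shows the centered variable $Y \eqdef \sqn{X} - \sigma^2$ is again $O(R^2)$-sub-exponential, now with mean zero; thus the i.i.d.\ copies $Y_i \eqdef \sqn{X_i} - \sigma^2$ are mean-zero and $O(R^2)$-sub-exponential. Applying \Cref{lem:bernstein-to-hoeffding} to $Y_1, \ldots, Y_n$ with sub-exponential modulus of order $R^2$ gives, with probability $1-\delta$,
\[
\abs{\frac{1}{n} \sum_{i=1}^n Y_i} \leq c R^2 \left[ \sqrt{\frac{1}{n} \log \frac{2}{\delta}} + \frac{1}{n} \log \frac{2}{\delta} \right].
\]
Multiplying through by $n$ and substituting $Y_i = \sqn{X_i} - \sigma^2$ yields a bound on $\abs{\sum_{i=1}^n (\sqn{X_i} - \sigma^2)}$ of the form $c R^2 [\sqrt{n \log \tfrac{2}{\delta}} + \log \tfrac{2}{\delta}]$. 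Finally, by \Cref{def:signal-to-noise} we have $R^2 = \sigma^2 / K_{\mathrm{snr}}^2 = \sigma^2 K_{\mathrm{snr}}^{-2}$, and absorbing the additive $\log 2$ into the absolute constant replaces $\log \tfrac{2}{\delta}$ by $\log \tfrac{1}{\delta}$, which is precisely the claimed inequality (with a possibly larger $c$).

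The argument is essentially a bookkeeping chain through the three preceding lemmas, so there is no genuine obstacle; the only mild care needed is (i) matching the two slightly different normalizations of ``sub-gaussian modulus'' in \Cref{asm:subg-noise} versus \Cref{def:sub-gaussian}, which affects only absolute constants, and (ii) noting that the quantity being subtracted is $\ec{\sqn{X}} = \sigma^2$ rather than $\sqn{\ec{X}}$, so that centering really does produce $\sqn{X_i} - \sigma^2$.
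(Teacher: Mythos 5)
Your proposal is correct and follows the same route as the paper's proof: sub-gaussian norm $\Rightarrow$ sub-exponential square (\Cref{lem:sub-exp-sub-gauss}) $\Rightarrow$ centering (\Cref{lem:centering-sub-exponential}) $\Rightarrow$ Bernstein (\Cref{lem:bernstein-to-hoeffding}) $\Rightarrow$ substitute $R^2 = \sigma^2 K_{\mathrm{snr}}^{-2}$. You even handle the final multiplication by $n$ more carefully than the paper, whose last display is left in averaged form.
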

\begin{proof}
By assumption we have that $\norm{X_i}$ is $R$-sub-gaussian, therefore by
\Cref{lem:sub-exp-sub-gauss} we have that $\norm{X_i}^2$ is
$R^2$-sub-exponential. By \Cref{lem:centering-sub-exponential} we then have that
$\norm{X_i}^2-\sigma^2$ is $c_1 \cdot R^2$-sub-exponential for some absolute constant $c$. By
\Cref{lem:bernstein-to-hoeffding} applied to $Y_i = \norm{X_i}^2 - \sigma^2$ we have
with probability $1-\delta$ that
\begin{align*}
  \abs{\frac{1}{n} \sum_{i=1}^n (\norm{X_i} - \sigma^2)} \leq c_2 \cdot (c_1 R^2) \left[ \sqrt{\frac{1}{n} \log \frac{2}{\delta}} + \frac{1}{n} \log \frac{2}{\delta}  \right],
\end{align*}
where $c_2 > 0$ is some absolute constant. Using the definition of the signal-to-noise ratio
$K_{\mathrm{snr}}^{-1} = \frac{R}{\sigma}$ we get that for some absolute constant $c$
\begin{align*}
\abs{\frac{1}{n} \sum_{i=1}^n (\norm{X_i} - \sigma^2)} \leq c \cdot \sigma^2 \cdot K_{\mathrm{snr}}^{-2} \left[ \sqrt{\frac{1}{n} \log \frac{2}{\delta}} + \frac{1}{n} \log \frac{2}{\delta}  \right].
\end{align*}
\end{proof}

\subsection{Proof of Theorem~\ref{thm:dog-dowg-special-noise}}

The main idea in the proof is the following lemma, which characterizes the
convergence of the sample variance estimator of $b$ i.i.d. random variables by
the number of samples $b$ as well as the signal-to-noise ratio $K_{\mathrm{snr}}^{-1}$.
\begin{lemma}\label{lem:sample-variance-estimation}
Let $Y$ be a random vector in $\mathbb{R}^d$ such that $Z=Y-\ec{Y}$ satisfies the
assumptions in \Cref{def:signal-to-noise}. Let $Y_1, Y_2, \ldots, Y_b$ be i.i.d.
copies of $Y$. Define the sample mean and variance as
\begin{align*}
  \hat{Y} = \frac{1}{b} \sum_{i=1}^b Y_i, && \hat{\sigma}^2 &= \frac{1}{b} \sum_{i=1}^b \sqn{Y_i - \bar{Y}}.
\end{align*}
Then it holds with probability $1-\delta$ that
\begin{align*}
\abs{\frac{\hat{\sigma}^2}{\sigma^2}  - 1} \leq c \cdot K_{\mathrm{snr}}^{-2} \cdot \left( \sqrt{\frac{\log \frac{2b}{\delta}}{b}} + \frac{\log \frac{2 (b \lor d)}{\delta}}{b}  \right),
\end{align*}
where $c$ is an absolute (non-problem-dependent) constant,
$b \lor d= \eqdef \max(b, d)$,
$\sigma^2 \eqdef \ecn{Y-\ec{Y}}$, and $K_{\mathrm{snr}}^{-2}$ is the ratio defined in \Cref{def:signal-to-noise}.
\end{lemma}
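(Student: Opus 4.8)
The plan is to reduce to \Cref{lem:convergence-of-sample-variance} (which already handles the ``known-mean'' version) via the classical bias decomposition of the sample variance, and then separately control the error incurred by centering at the empirical mean rather than at $\mu \eqdef \ec{Y}$. Writing $Z_i \eqdef Y_i - \mu$ and $\bar{Z} \eqdef \frac1b\sum_{i=1}^b Z_i = \bar{Y} - \mu$, each $Z_i$ is an i.i.d.\ copy of a vector satisfying the hypotheses of \Cref{def:signal-to-noise}, and
\begin{align*}
\hat{\sigma}^2 \;=\; \frac1b\sum_{i=1}^b \sqn{Y_i - \bar{Y}} \;=\; \frac1b\sum_{i=1}^b \sqn{Z_i} \;-\; \sqn{\bar{Z}}.
\end{align*}
I would allocate failure probability $\delta/2$ to each of the two terms. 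For the first term, \Cref{lem:convergence-of-sample-variance} applied to $Z_1,\dots,Z_b$ directly gives, with probability $\ge 1-\delta/2$, that $\bigl|\frac1b\sum_i\sqn{Z_i} - \sigma^2\bigr| \le c\,\sigma^2 K_{\mathrm{snr}}^{-2}\bigl(\sqrt{\log(4/\delta)/b} + \log(4/\delta)/b\bigr)$. Dividing by $\sigma^2$ and using $\sqn{\bar{Z}}\ge 0$, this already yields the desired bound on $\hat{\sigma}^2/\sigma^2 - 1$ \emph{from above}; only the lower tail requires an upper bound on $\sqn{\bar{Z}}/\sigma^2$.

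For the centering term $\sqn{\bar{Z}}$ the subtlety is that the assumption controls only the \emph{norm} $\norm{Z_i}$ (sub-gaussian with modulus $R$) and the second moment $\ecn{Z_i} = \sigma^2$ — it does not say $Z_i$ is a sub-gaussian random vector — so a vector Bernstein / Hanson--Wright inequality cannot be invoked as a black box. I would first pass to a bounded regime: set $M \eqdef R\sqrt{2\log(8b/\delta)}$; by \Cref{asm:subg-noise} and a union bound over $i\in[b]$, the event $\mathcal{E} = \{\norm{Z_i}\le M\text{ for all }i\}$ has probability $\ge 1-\delta/4$. On $\mathcal{E}$ (or, to be fully rigorous, after replacing $Z_i$ by its truncation $Z_i\,\mathbf{1}\{\norm{Z_i}\le M\}$, whose mean is $O(\delta)$-close to $0$, whose second moment is at most $\sigma^2$, and with $\pr[\exists i:\ \text{truncation active}]\le\delta/4$) the map $(Z_1,\dots,Z_b)\mapsto\norm{\frac1b\sum_i Z_i}$ has bounded differences at most $2M/b$, so McDiarmid's inequality combined with $\ec{\norm{\bar{Z}}}\le\sqrt{\ecn{\bar{Z}}} = \sigma/\sqrt b$ gives, with probability $\ge 1-\delta/4$,
\begin{align*}
\norm{\bar{Z}} \;\le\; \frac{\sigma}{\sqrt b} + c\,M\sqrt{\frac{\log(4/\delta)}{b}}, \qquad\text{whence}\qquad \sqn{\bar{Z}} \;\le\; \frac{2\sigma^2}{b} + \frac{c\,R^2\,\log(8b/\delta)\,\log(4/\delta)}{b}.
\end{align*}
A matrix-Bernstein bound for $\sum_i Z_i$ on $\mathcal{E}$ is an alternative that trades one $\log(1/\delta)$ factor for a $\log d$; either way one lands $\sqn{\bar{Z}}\lesssim (\sigma^2 + R^2\log\frac{b\vee d}{\delta})/b$ up to logarithmic factors, which is the origin of the $\log(b\vee d)$ in the statement — and under a genuine almost-sure bound on the noise one may take $M=R$, so the dimension dependence disappears, consistent with the remark following the theorem.

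Finally, on the intersection of the (at most three) good events, whose total failure probability is $\le\delta$, I would divide the $\sqn{\bar{Z}}$ bound by $\sigma^2$, use $K_{\mathrm{snr}}^{-2} = R^2/\sigma^2 \ge 1$ and $1/b \le \sqrt{\log(2b/\delta)/b}$ to absorb the $\sigma^2/b$ contribution, add it to the first-term bound, and collect logarithmic factors to obtain $\bigl|\hat{\sigma}^2/\sigma^2 - 1\bigr| \le c\,K_{\mathrm{snr}}^{-2}\bigl(\sqrt{\log(2b/\delta)/b} + \log(2(b\vee d)/\delta)/b\bigr)$; in the residual small-$b$ regime the right-hand side already exceeds $K_{\mathrm{snr}}^{-2}\ge 1$ and the claim follows immediately from the first-term bound together with $\hat{\sigma}^2\ge 0$. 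I expect the main obstacle to be the centering term: getting from ``norm is sub-gaussian, second moment is finite'' to a concentration bound of exactly the advertised shape requires the truncation step (costing the union-bound $\log b$), plus careful bookkeeping of the truncation bias and of the log/dimension factors so as not to incur an extra $\log(1/\delta)$ or a $d/b$ loss; the diagonal term, by contrast, is essentially immediate from \Cref{lem:convergence-of-sample-variance}.
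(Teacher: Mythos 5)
Your proposal is essentially correct but follows a genuinely different route from the paper's. You use the exact identity $\hat{\sigma}^2 = \frac{1}{b}\sum_i \sqn{Z_i} - \sqn{\bar{Z}}$, which leaves only two terms to control; the paper instead expands $\sqn{Y_i - \hat{Y}}$ around $\mu$ into three terms, so it must additionally handle the cross term $\frac{1}{b}\sum_i \ev{Y_i - \mu, \hat{Y}-\mu}$ (this it does by a diagonal/off-diagonal split, sub-exponential Bernstein on the products $\ev{Z_i, Z_j}$, and a union bound over $i$ --- which is where the paper's $\log b$ comes from; in your version the $\log b$ enters instead through the truncation union bound). Your decomposition is cleaner, and the diagonal term is handled identically via \Cref{lem:convergence-of-sample-variance}. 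The one place where you lose something is the centering term $\sqn{\bar{Z}}$: your premise that ``a vector Bernstein inequality cannot be invoked as a black box'' under \Cref{asm:subg-noise} is not right --- the condition that the \emph{norm} of the noise is sub-gaussian is precisely the norm-subGaussian hypothesis of \citet{jin19_short_note_concen_inequal_random}, and the paper applies their Corollary 7 as a black box to get $\sqn{\bar{Z}} \le c R^2 \log(2d/\delta)/b$ in one line, with exactly the single $\log$ and the $\log d$ appearing in the statement. Your truncation-plus-McDiarmid substitute is sound but, as you yourself flag, yields $R^2\log(8b/\delta)\log(4/\delta)/b$, i.e.\ a product of two logarithms rather than the single $\log\frac{2(b\vee d)}{\delta}$ in the lemma; so as written your argument proves the lemma only up to an extra logarithmic factor. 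This is harmless for the downstream use (it merely inflates the required minibatch size $b$ by a log factor), but to recover the stated bound verbatim you should replace the McDiarmid step with the norm-subGaussian concentration inequality.
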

\begin{proof}
We shall use the shorthand $\mu = \ec{ Y }$. We have
\begin{align*}
  \hat{\sigma}^2 &= \frac{1}{b} \sum_{i=1}^b \sqn{Y_i - \hat{Y}} \\
&= \frac{1}{b} \sum_{i=1}^b \sqn{Y_i - \mu + \mu - \hat{Y}} \\
&= \frac{1}{b} \sum_{i=1}^b \left[ \sqn{Y_i - \mu} + \sqn{\mu - \hat{Y}} + 2 \ev{Y_i - \mu, \mu - \hat{Y}} \right] \\
              &= \frac{1}{b} \sum_{i=1}^b \sqn{Y_i - \mu} + \sqn{\mu - \hat{Y}} - \frac{2}{b} \sum_{i=1}^b \ev{Y_i - \mu, \hat{Y} - \mu}
\end{align*}
We have by the triangle inequality
\begin{align}
\label{eq:20}
  \abs{\hat{\sigma}^2 - \sigma^2} &\leq \abs{ \frac{1}{b} \sum_{i=1}^b \sqn{Y_i - \mu} - \sigma^2} + \sqn{\mu - \hat{Y}} + \abs{\frac{2}{b} \sum_{i=1}^b \ev{ Y_i - \mu , \hat{Y}^i - \mu}}
\end{align}
By \Cref{lem:convergence-of-sample-variance}, we may bound the first term on the
right hand side of \cref{eq:20} as
\begin{align}
\label{eq:6}
\abs{ \frac{1}{b} \sum_{i=1}^b \sqn{Y_i - \mu} - \sigma^2} \leq c \cdot \sigma^2 \cdot K_{\mathrm{snr}}^{-2} \left[ \sqrt{\frac{\log \frac{1}{\delta}}{b}} + \frac{\log \frac{2}{\delta}}{b} \right].
\end{align}
For the second term on the right hand side of \cref{eq:20}, we apply
\citep[Corollary 7]{jin19_short_note_concen_inequal_random} to $X_i = \mu - Y_i$
and obtain
\begin{align*}
  \norm{\sum_{i=1}^b \left[ \mu - Y_i \right]} \leq c \cdot \sqrt{b R^2 \log \frac{2d}{\delta}} = c R \sqrt{b \log \frac{2d}{\delta}}.
\end{align*}
Squaring both sides we get
\begin{align*}
  \sqn{\sum_{i=1}^b \left[ \mu - Y_i \right]} \leq c^2 R^2 b \log \frac{2d}{\delta}
\end{align*}
Therefore
\begin{align}
\label{eq:15}
  \sqn{\frac{1}{b} \sum_{i=1}^b \left[ \mu - Y_i \right]} \leq \frac{c^2 R^2 \log \frac{2d}{\delta}}{b}.
\end{align}
For the third term on the right hand side of \cref{eq:20} we have
\begin{align*}
\sum_{i=1}^b \ev{ Y_i - \mu , \hat{Y} - \mu } &= \sum_{i=1}^b \ev{ Y_i - \mu, \frac{1}{b} \sum_{j=1}^b [Y_i - \mu] } \\
&= \frac{1}{b} \sqn{Y_i - \mu} + \frac{1}{b} \sum_{j \neq i} \ev{ Y_i - \mu, Y_j - \mu }.
\end{align*}
Taking absolute values of both sides and using the triangle inequality we get
\begin{align}
  \abs{\frac{1}{b} \sum_{i=1}^b \ev{ Y_i - \mu , \hat{Y} - \mu }} &= \abs{\frac{1}{b} \sqn{Y_i - \mu} + \frac{1}{b} \sum_{j \neq i} \ev{ Y_i - \mu, Y_j - \mu }} \nonumber\\
\label{eq:21}
&\leq \frac{1}{b} \sqn{Y_i - \mu} + \abs{\frac{1}{b} \sum_{j \neq i} \ev{ Y_i - \mu, Y_j - \mu }}.
\end{align}
By our sub-gaussian assumption on $\norm{Y-\mu}$, the first term on the right hand
side of \cref{eq:21} can be bounded with high probability as
\begin{align}
\label{eq:22}
\norm{Y_i - \mu} \leq c \sqrt{R^2 \log \frac{2}{\delta}} = c R \sqrt{\log \frac{2}{\delta}}.
\end{align}
Define $Z_{i, j} = \ev{ Y_i - \mu,  Y_j - \mu}$. Observe that for each $i$, we have that the random vectors
$Z_{i, 1}, \ldots, Z_{i,i-1}, Z_{i,i+1}, \cdots, Z_{i, n}$ are all independent, and
therefore $\ec{Y_{i, j}} = 0$ for $i \neq j$. Observe that by the Cauchy-Schwartz inequality
\begin{align*}
\abs{Z_{i, j}} &= \abs{\ev{ Y_i - \mu,  Y_j - \mu}} \leq \norm{Y_i - \mu} \norm{Y_j - \mu}.
\end{align*}
Observe that each of $\norm{Y_i - \mu}$ and $\norm{Y_j - \mu}$ is
sub-gaussian with modulus $R$, therefore by \citep[Lemma
2.7.7]{vershynin18_high_dim_prob} their product is sub-exponential with modulus
$R^2$. It follows that
\begin{align*}
\pr[ \abs{Z_{i, j}} \geq t ] \leq \pr[ \norm{Y_i - \mu} \norm{Y_j - \mu} \geq t] \leq 2 \exp \left( -\frac{t}{R^2}  \right).
\end{align*}
Therefore $Z_{i, j}$ is also sub-exponential with modulus $R^2$. By
\Cref{lem:bernstein-to-hoeffding} we then get that for any fixed $i$, with
probability at least $1-\delta$ we have
\begin{align}\label{eq:23}
  \abs{\frac{1}{b-1} \sum_{\substack{j=1,\ldots, b \\ j\neq i}} Z_{i, j}} \leq c \cdot R^2 \left[ \sqrt{\frac{1}{b-1} \log \frac{2}{\delta}} + \frac{1}{b-1} \log \frac{2}{\delta} \right],
\end{align}
for some absolute constant $c > 0$. Multiplying both sides of \cref{eq:23} by $\frac{b-1}{b}$ and then using
straightforward algebra we get
\begin{align}
\abs{\frac{1}{b} \sum_{\substack{j=1,\ldots, b \nonumber\\ j\neq i}} Z_{i, j}} &\leq c R^2 \left[ \sqrt{\frac{1}{b-1} \log \frac{2}{\delta}} + \frac{1}{b-1} \log \frac{2}{\delta} \right] \cdot \frac{b-1}{b} \nonumber\\
&= c R^2 \left[ \sqrt{\frac{1}{b} \log \frac{2}{\delta}} \cdot \sqrt{\frac{b}{b-1}} + \frac{1}{b} \log \frac{2}{\delta} \cdot \frac{b}{b-1}  \right] \frac{b-1}{b} \nonumber\\
&= c R^2 \left[ \sqrt{\frac{1}{b} \log \frac{2}{\delta}} \sqrt{\frac{b-1}{b}} + \frac{1}{b} \log \frac{2}{\delta} \right] \nonumber\\
\label{eq:28}
&\leq c R^2 \left[ \sqrt{\frac{1}{b} \log \frac{2}{\delta}} + \frac{1}{b} \log \frac{2}{\delta} \right].
\end{align}
We now use the union bound over all $i$ with the triangle inequality to get
\begin{align}\label{eq:29}
  \abs{\frac{1}{b} \sum_{i=1}^n \frac{1}{b} \sum_{\substack{j=1,\ldots, b\\ j\neq i}} Z_{i, j}} \leq \frac{1}{b} \sum_{i=1}^n \abs{\frac{1}{b} \sum_{\substack{j=1,\ldots, b \\ j\neq i}} Z_{i, j} } \leq c R^2 \left[ \sqrt{\frac{1}{b} \log \frac{2b}{\delta}} + \frac{1}{b} \log \frac{2b}{\delta} \right].
\end{align}
Combining \cref{eq:22,eq:29} we get that with probability $1-\delta$ there exists
some absolute constant $c^{\prime} > 0$
\begin{align}
\label{eq:30}
  \abs{\frac{1}{b} \sum_{i=1}^b \ev{ Y_i - \mu , \hat{Y} - \mu }} \leq c^{\prime} R^2 \left[ \sqrt{\frac{\log \frac{2b}{\delta}}{b}} + \frac{\log \frac{2b}{\delta}}{b}  \right].
\end{align}
Combining \cref{eq:6,eq:15,eq:30} into \cref{eq:20} we get
\begin{align*}
\abs{\hat{\sigma}^2 - \sigma^2} &\leq \abs{ \frac{1}{b} \sum_{i=1}^b \sqn{Y_i - \mu} - \sigma^2} + \sqn{\mu - \hat{Y}} + \abs{\frac{2}{b} \sum_{i=1}^b \ev{ Y_i - \mu , \hat{Y}^i - \mu}} \\
&\leq c_1 \cdot \sigma^2 \cdot K_{\mathrm{snr}}^{-2} \left[ \sqrt{\frac{\log \frac{2}{\delta}}{b}} + \frac{\log \frac{2}{\delta}}{b} \right] + c_2 \frac{R^2 \log \frac{2d}{\delta}}{b} + c_3 R^2 \left[ \sqrt{\frac{\log \frac{2b}{\delta}}{b}} + \frac{\log \frac{2b}{\delta}}{b}  \right].
\end{align*}
For some absolute constants $c_1, c_2, c_3 > 0$. Therefore, using the definition
$K_{\mathrm{snr}}^{-1} = \frac{R}{\sigma}$ and simplifying in the last equation we finally
get that
\begin{align*}
\abs{\hat{\sigma}^2 - \sigma^2} &\leq c_4 \cdot \sigma^2 K_{\mathrm{snr}}^{-2} \left[ \sqrt{\frac{\log \frac{2b}{\delta}}{b}} + \frac{\log \frac{2 (b \lor d)}{\delta}}{b}   \right],
\end{align*}
for some absolute constant $c_4 > 0$. Dividing both sides by $\sigma^2$ yields the
statement of the lemma.
\end{proof}

\begin{algorithm*}
\caption{T-DoG + Variance Estimation}
\label{alg:dog-with-estimation}
\begin{algorithmic}[1]
\REQUIRE initial point $x_0 \in \mathcal{X}$, initial distance estimate $r_{\epsilon} > 0$,
minibatch size $b$, $\theta > 0$.
\STATE Initialize $r_{\epsilon} = \underline{D}$, $\alpha = 8^4 \cdot \log (60 \log (6T)/\delta) \cdot \theta^{-1}$.
\FOR{$t = 0, 1, 2, \dots, T-1$}
\STATE Update distance estimator:
$\bar{r}_t \gets \max \left( \norm{x_t - x_0}, \ \bar{r}_{t-1} \right)$.
\STATE Sample $b$ stochastic gradients $\mu_t^1, \mu_t^2, \ldots, \mu_t^b$ at  $x_t$ and compute:
\begin{align*}
  \hat{\mu}_t &= \frac{1}{b} \sum_{i=1}^b \mu_t^i, && \hat{\sigma}_t^2 = \frac{1}{b} \sum_{i=1}^b \sqn{\mu_t^i - \hat{\mu}_t^i}, && \overline{\sigma}_t^2 = \max_{k \le t} \hat{\sigma}_k^2.
\end{align*}
\STATE Compute a new stochastic gradient $g_t$ evaluated at $x_t$.
\STATE Update the gradient sum $u_t = u_{t-1} + \sqn{g_t}$.
\STATE Set the stepsize:
\begin{align}
\label{eq:t-dog-update}
\eta_t \gets \frac{\bar{r}_t}{\alpha \sqrt{u_t + \beta \bar{\sigma}_t^2}} \frac{1}{\log_+^2 \left( 1 + \frac{u_t + \bar{\sigma}_t^2}{v_0 + \bar{\sigma}_0^2} \right)}.
\end{align}
\STATE Gradient descent step: $x_{t+1} \gets x_t - \eta_t \nabla f(x_t)$.
\ENDFOR
\end{algorithmic}
\end{algorithm*}

\begin{algorithm}
\caption{T-DoWG + Variance Estimation}
\label{alg:dowg-with-estimation}
\begin{algorithmic}[1]
\REQUIRE initial point $x_0 \in \mathcal{X}$, initial distance estimate $r_{\epsilon} > 0$,
minibatch size $b$, $\theta > 0$.
\STATE Initialize $r_{\epsilon} = \underline{D}$, $\alpha = 8^4 \cdot \log (60 \log (6T)/\delta) \cdot \theta^{-1}$.
\FOR{$t = 0, 1, 2, \dots, T-1$}
\STATE Update distance estimator:
$\bar{r}_t \gets \max \left( \norm{x_t - x_0}, \ \bar{r}_{t-1} \right)$.
\STATE Sample $b$ stochastic gradients $\mu_t^1, \mu_t^2, \ldots, \mu_t^b$ at  $x_t$ and compute:
\begin{align*}
  \hat{\mu}_t &= \frac{1}{b} \sum_{i=1}^b \mu_t^i, && \hat{\sigma}_t^2 = \frac{1}{b} \sum_{i=1}^b \sqn{\mu_t^i - \hat{\mu}_t^i}, && \overline{\sigma}_t^2 = \max_{k \le t} \hat{\sigma}_k^2.
\end{align*}
\STATE Compute a new stochastic gradient $g_t$ evaluated at $x_t$.
\STATE Update weighted gradient sum: $v_t \gets v_{t-1} + \bar{r}_t^2 \norm{g_t}^2$.
\STATE Set the stepsize:
\begin{align}
\label{eq:t-dowg-update}
\gamma_t \gets \frac{\bar{r}_t^2}{\alpha \sqrt{v_t + \beta \overline{r}_t^2 \bar{\sigma}_t^2}} \frac{1}{\log_+^2 \left( 1 + \frac{v_t + \overline{r}_t^2 \bar{\sigma}_t^2}{v_0 + \overline{r}_0^2 \bar{\sigma}_0^2} \right)}.
\end{align}
\STATE Gradient descent step: $x_{t+1} \gets x_t - \gamma_t \nabla f(x_t)$.
\ENDFOR
\end{algorithmic}
\end{algorithm}

\begin{proof}[Proof of Theorem~\ref{thm:dog-dowg-special-noise}]
First, observe that at every timestep $t$, conditioned on
$\mathcal{F}_t = \sigma \left( g_{1:t-1}, x_{1:t} \right)$ we have by
\Cref{lem:sample-variance-estimation} that with probability $1-\frac{\delta}{T}$ that the sample variance $\hat{\sigma}^2_t$
satisfies for some $c > 0$
\begin{align*}
\abs{\frac{\hat{\sigma}_t^2}{\sigma^2 (x_t)} - 1} \leq c \cdot K_{\mathrm{snr}}^{-2} \cdot \left( \sqrt{\frac{\log \frac{2bT}{\delta}}{b}} + \frac{\log \frac{ 2(b \lor d)T}{\delta}}{b}  \right),
\end{align*}
where $c$ is an absolute constant and $\sigma_t^2 = \sigma^2 (x_t)$ denotes the variance of the
noise at $x_t$ (we do not assume that the noise distribution is the same for all
$t$). By our assumption on the minibatch size we have that for some $u \in [0, K_{\mathrm{snr}}^2]$
\begin{align*}
c \cdot K_{\mathrm{snr}}^{-2} \cdot \left( \sqrt{\frac{\log \frac{2bT}{\delta}}{b}} + \frac{\log \frac{ 2(b \lor d) T}{\delta}}{b}  \right) \leq 1 - \frac{\theta}{K_{\mathrm{snr}}^2}.
\end{align*}
And therefore
\begin{align*}
\abs{\frac{\hat{\sigma}_t^2}{\sigma^2 (x_t)} - 1} \leq 1 - \frac{\theta}{K_{\mathrm{snr}}^2}.
\end{align*}
Which gives
\begin{align*}
\frac{\hat{\sigma}_t^2}{\sigma_t^2} \ge 1 - \left( 1 - \frac{\theta}{K_{\mathrm{snr}}^2} \right) = \frac{\theta}{K_{\mathrm{snr}}^2}.
\end{align*}
Multiplying both sides by $\sigma_t^2$ we get
\begin{align*}
\hat{\sigma}_t^2 &\geq \sigma_t^2 \frac{\theta}{K_{\mathrm{snr}}^2} \geq R^2 \theta.
\end{align*}
Therefore $\hat{\sigma}_t^2/\theta$ is, with high probability, an upper bound on any noise
norm, and we can use that as normalization in T-DoG/T-DoWG. This is the key idea
of the proof, and it's entirely owed to \Cref{lem:sample-variance-estimation}.
The rest of the proof follows \citep{ivgi23_dog_is_sgds_best_frien} with only a
few changes to incorporate the variance estimation process.

Following \citep{ivgi23_dog_is_sgds_best_frien}, we define the stopping time
\begin{align*}
\mathcal{T}_{\mathrm{out}} &= \min \left\{ t \mid \overline{r}_t > 3 d_0 \right\}.
\end{align*}
And define the proxy sequences
\begin{align}
\label{eq:39}
\tilde{\eta}_k &= \begin{cases}
               \eta_k  & \text { if } k < \mathcal{T}_{\mathrm{out}}, \\
               0  & \text { otherwise. }
\end{cases} && \tilde{\gamma}_k = \begin{cases}
                              \gamma_k  & \text { if } k < \mathcal{T}_{\mathrm{out}}, \\
                              0  & \text { otherwise. }
                              \end{cases}
\end{align}

\begin{lemma}(Modification of \citep[Lemma 8]{ivgi23_dog_is_sgds_best_frien})
Under the conditions of Theorem~\ref{thm:dog-dowg-special-noise} both the
DoG~\eqref{eq:t-dog-update} and DoWG~\eqref{eq:t-dowg-update} updates satisfy
for all $t \leq T$
\begin{align*}
\rho_t &\in \sigma(g_0,  \mu_0^1, \ldots, \mu_0^b \ldots, g_{t-1}, \mu_0^{t-1}, \ldots, \mu_b^{t-1}), \\
\abs{\rho_t \ev{ g_t - \nabla f(x_t), x_t - x_{\ast} }} &\leq \frac{6 d_0^2}{8^2 \theta_{T, \delta}}, \\
\sum_{k=0}^t \rho_k^2 \sqn{g_k} &\leq \frac{9 d_0^2}{8^4 \theta_{T, \delta}}, \\
  \sum_{k=0}^t (\rho_k \ev{ g_k ,  x_k - x_{\ast}})^2 &\leq \frac{12^2 d_0^4}{8^4 \theta_{T, \delta}},
\end{align*}
where $\rho_t$ stands for either the DoG stepsize proxy $\tilde{\eta}_k$ or the DoWG
stepsize proxy $\tilde{\gamma}_k$.
\end{lemma}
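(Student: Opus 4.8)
The plan is to re-run the proof of \citep[Lemma 8]{ivgi23_dog_is_sgds_best_frien} with the unknown almost-sure noise ceiling $R$ replaced throughout by the running quantity $\bar{\sigma}_t/\sqrt{\theta}$. The only genuinely new ingredient is the event $E$ on which, for every $t\in\{0,\dots,T-1\}$, the conclusion of \Cref{lem:sample-variance-estimation} holds for the minibatch $\mu_t^1,\dots,\mu_t^b$ at confidence $1-\delta/T$; a union bound gives $\pr[E]\ge 1-\delta$. On $E$, the minibatch-size hypothesis of \Cref{thm:dog-dowg-special-noise} upgrades the two-sided estimate of \Cref{lem:sample-variance-estimation} to the one-sided bound $\hat{\sigma}_t^2\ge\theta\,\sigma^2(x_t)/K_{\mathrm{snr}}^2\ge\theta R^2$ for all $t$ simultaneously --- this is exactly the computation carried out in the body of the proof of \Cref{thm:dog-dowg-special-noise} --- so $\bar{\sigma}_t^2=\max_{k\le t}\hat{\sigma}_k^2\ge\theta R^2$, and $\bar{\sigma}_t^2/\theta$ is a uniform-in-$t$ overestimate of the true noise scale. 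In the almost-surely-bounded case this gives $\norm{g_t-\nabla f(x_t)}\le\bar{\sigma}_t/\sqrt{\theta}$ deterministically on $E$; in the sub-gaussian case I would additionally intersect with the high-probability event $\norm{g_t-\nabla f(x_t)}\lesssim R\sqrt{\log(2dT/\delta)}\le\bar{\sigma}_t\sqrt{\log(2dT/\delta)/\theta}$ for all $t$, which is the source of the logarithmic dimension factor (and its disappearance under bounded noise is the ``Note on dimension dependence'' remark).

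Everything that remains is bookkeeping on $E$, paralleling \citep[Lemma 8]{ivgi23_dog_is_sgds_best_frien}. The measurability claim is immediate, since $\bar{r}_t$, $u_t$ (resp.\ $v_t$), $\hat{\sigma}_t^2$ and the stopping time $\mathcal{T}_{\mathrm{out}}=\min\{t:\bar{r}_t>3d_0\}$ are all functions of the gradients and minibatches seen through step $t$, so the truncated proxies $\tilde{\eta}_t,\tilde{\gamma}_t$ of \cref{eq:39} lie in the stated $\sigma$-algebra. For $t\ge\mathcal{T}_{\mathrm{out}}$ the proxy vanishes and the three inequalities are trivial, so fix $t<\mathcal{T}_{\mathrm{out}}$, where by definition $\bar{r}_t\le3d_0$ and hence $\norm{x_t-x_{\ast}}\le\bar{r}_t+d_0\le4d_0$. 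For the second inequality, discard all but the $\beta\bar{\sigma}_t^2$-term in the denominator of \cref{eq:t-dog-update} (resp.\ the $\beta\bar{r}_t^2\bar{\sigma}_t^2$-term of \cref{eq:t-dowg-update}) to get $\rho_t\le3d_0/(\alpha\sqrt{\beta}\,\bar{\sigma}_t)$, multiply by $\norm{g_t-\nabla f(x_t)}\,\norm{x_t-x_{\ast}}\le4d_0\,\bar{\sigma}_t/\sqrt{\theta}$, and check that $\alpha=8^4\theta_{T,\delta}\theta^{-1}$ makes the product at most $6d_0^2/(8^2\theta_{T,\delta})$. For the third inequality, use $\log_+^4\ge\log_+^2$ together with the trivial bounds $\norm{g_k}^2\le u_k$ (resp.\ $\bar{r}_k^2\norm{g_k}^2\le v_k$) and $\bar{r}_k\le3d_0$ to obtain $\rho_k^2\norm{g_k}^2\le\frac{9d_0^2}{\alpha^2}\cdot\frac{a_k}{A_k\log_+^2(1+A_k/A_0)}$ with $a_k=\norm{g_k}^2$, $A_k=\sum_{j\le k}a_j$ (resp.\ $a_k=\bar{r}_k^2\norm{g_k}^2$), and then invoke the elementary inequality $\sum_k a_k/\big(A_k\log_+^2(1+A_k/A_0)\big)\le C$ for an absolute constant $C$ --- precisely the bound the extra $\log_+^2$ divisor of T-DoG/T-DoWG exists to exploit, and which collapses a sum that is merely $O(\log)$ for DoG/DoWG into a constant --- so that, since $\theta_{T,\delta}\ge1\ge\theta$, the total is $\le9d_0^2/(8^4\theta_{T,\delta})$. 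The fourth inequality is Cauchy--Schwarz on top of the third: $(\rho_k\ev{g_k,x_k-x_{\ast}})^2\le16d_0^2\,\rho_k^2\norm{g_k}^2$, and summing yields $16d_0^2\cdot9d_0^2/(8^4\theta_{T,\delta})=12^2d_0^4/(8^4\theta_{T,\delta})$. The DoWG branch is word-for-word the same after the substitutions $u_k\mapsto v_k$ and $\bar{\sigma}_k^2\mapsto\bar{r}_k^2\bar{\sigma}_k^2$.

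I expect the only subtle step to be the construction of $E$, i.e.\ certifying that $\bar{\sigma}_t^2/\theta$ really dominates $R^2$ along the whole trajectory. Three features make it nontrivial: (i) the noise law may change from iterate to iterate, so the estimate must be uniform in $t$ and cannot rely on stationarity --- handled by the union bound and by the fact that \Cref{lem:sample-variance-estimation} needs only the one-sided variance comparison the signal-to-noise condition supplies; (ii) the estimator must \emph{over}shoot $R$, not merely approximate the variance $\sigma^2(x_t)$, which is exactly why we carry the slack $\theta\in[0,K_{\mathrm{snr}}]$ and impose the batch-size lower bound of \Cref{thm:dog-dowg-special-noise}; and (iii) in the sub-gaussian regime the fresh gradient $g_t$ used in the descent step is independent of the minibatch used for the variance estimate, so its own deviation must be bounded separately, which is where the $\log d$ term comes from. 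Once $E$ is in hand no further probabilistic input is needed for this lemma; the martingale concentration of \Cref{lem:dog-concentration}, the energy inequalities \Cref{lem:dog-intermediate-almost-sure,lem:dowg-intermediate-almost-sure}, and the $\mathcal{T}_{\mathrm{out}}$-stability bound on $\bar{r}_t$ are reserved for the subsequent steps that turn this lemma into the rates of \Cref{thm:dog-dowg-special-noise}, and those transfer from \citep{ivgi23_dog_is_sgds_best_frien} with $R$ replaced by $\bar{\sigma}_t/\sqrt{\theta}$.
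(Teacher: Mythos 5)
The paper offers no proof of this lemma at all --- its ``proof'' is the single sentence that the modification of \citep[Lemma 8]{ivgi23_dog_is_sgds_best_frien} is straightforward and omitted --- so there is nothing to compare line by line. Your reconstruction is, however, exactly the argument the authors intend, and it is essentially sound: the one genuinely new ingredient is the event $E$ on which $\bar{\sigma}_t^2/\theta \geq R^2$ uniformly in $t$ (which is the computation the paper does carry out explicitly in the body of the proof of \Cref{thm:dog-dowg-special-noise}), and once $E$ is in hand the three quantitative bounds follow from the structure of the T-DoG/T-DoWG stepsizes precisely as you describe. Your constant-chasing checks out: dropping all but the $\beta\bar{\sigma}_t^2$ term in the denominator gives $\rho_t \leq 3d_0/(\alpha\sqrt{\beta}\,\bar{\sigma}_t)$ for $t < \mathcal{T}_{\mathrm{out}}$, which with $\norm{x_t-x_\ast}\leq 4d_0$ and $\alpha = 8^4\theta_{T,\delta}\theta^{-1}$ yields the second bound with room to spare; the elementary summation bound $\sum_k a_k/(A_k\log_+^2(1+A_k/A_0)) = O(1)$ is indeed the entire reason the extra $\log_+^2$ divisor exists in T-DoG/T-DoWG versus DoG/DoWG; and Cauchy--Schwarz plus $\norm{x_k-x_\ast}\leq 4d_0$ converts the third bound into the fourth with exactly the factor $16d_0^2$ needed to produce $12^2 = 16\cdot 9$.

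Two caveats, both inherited from imprecision in the paper rather than errors of yours. First, the parameter $\beta$ in \cref{eq:t-dog-update,eq:t-dowg-update} is never assigned a value anywhere in the paper, so your verification of the second inequality implicitly assumes $\beta \geq 1$ (any such choice works since $\theta \leq 1$); you should state that assumption. Second, the measurability claim is not quite ``immediate'' as written: in \Cref{alg:dog-with-estimation} the stepsize $\eta_t$ is computed from $u_t = u_{t-1} + \sqn{g_t}$, which includes the \emph{current} gradient $g_t$, whereas the lemma asserts $\rho_t$ is measurable with respect to the $\sigma$-algebra generated only through $g_{t-1}$. Either the algorithm should use $u_{t-1}$ in the stepsize, or the $\sigma$-algebra in the lemma should include $g_t$; as stated the two are inconsistent, and your proof should flag which convention it adopts (the martingale argument in \Cref{lem:dog-concentration} that consumes this lemma downstream needs the weights to be controlled appropriately). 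Neither issue undermines the substance of your argument.
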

\begin{proof}
The modification of this lemma to account for bounded noise $g(x_k) - \nabla f(x_k)$ rather than
bounded gradients is straightforward, and we omit it for simplicity.
\end{proof}

\begin{lemma}\label{lem:dog-dowg-stoch-step-2}(Modification of \citep[Lemma 9]{ivgi23_dog_is_sgds_best_frien})
Under the conditions of Theorem~\ref{thm:dog-dowg-special-noise} both the
DoG~\eqref{eq:t-dog-update} and DoWG~\eqref{eq:t-dowg-update} updates satisfy
for all $t \leq T$ with probability at least $1-\delta$
\begin{align*}
  \sum_{k=0}^{t-1} \tilde{\eta}_k \ev{ g_k - \nabla f(x_k) , x_{\ast} - x_k } \leq d_0^2.
\end{align*}
\end{lemma}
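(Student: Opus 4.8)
The plan is to follow the proof of \citep[Lemma 9]{ivgi23_dog_is_sgds_best_frien}, with the bounded-gradient estimates used there replaced by the bounded-noise estimates available here. First I would intersect the high-probability event constructed in the proof of \Cref{thm:dog-dowg-special-noise} — on which $\bar{\sigma}_t^2 \ge \theta R^2$ for every $t \le T$, so that all the almost-sure bounds of the preceding lemma (the modification of \citep[Lemma 8]{ivgi23_dog_is_sgds_best_frien}) are in force — with the event of \Cref{lem:dog-concentration}; a union bound keeps the total failure probability at $\delta$ after adjusting constants. On this event put $\xi_k \eqdef \tilde{\eta}_k \ev{g_k - \nabla f(x_k), x_{\ast} - x_k}$. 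By the first bullet of the preceding lemma $\tilde{\eta}_k$ is measurable with respect to $\mathcal{F}_{k-1}$, the $\sigma$-algebra generated by the gradients and minibatch samples through step $k-1$, while $g_k$ is a fresh unbiased draw; hence $\ec{\xi_k \mid \mathcal{F}_{k-1}} = 0$ and $(\xi_k)$ is a martingale difference sequence.

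\Cref{lem:dog-concentration} cannot be applied with $\tilde{\eta}_k$ itself as the weight $y_k$, since $\tilde{\eta}_k$ is data-dependent and not monotone. Following \citep{ivgi23_dog_is_sgds_best_frien} I would instead apply it to the normalized martingale differences $X_k = \ev{g_k - \nabla f(x_k), (x_{\ast} - x_k)/\bar{d}_k}$ with the nonnegative, nondecreasing, predictable weight $y_k = \bar{r}_k \bar{d}_k$ (a product of running maxima), with $\hat{X}_k = 0$, and with the free parameter $c$ set equal to the almost-sure bound on $|X_k|$ — by Cauchy--Schwarz and $d_k \le \bar d_k$ this is $R$ under bounded noise, or $O(R\sqrt{\log(dT/\delta)})$ under sub-gaussian noise (in which case an extra logarithmic factor propagates into the final bound) — so that the term $\pr[\exists t \le T : C_t > c]$ vanishes identically. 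This produces, simultaneously for all $t \le T$, a uniform bound $\big|\sum_{k<t} \bar r_k \ev{g_k - \nabla f(x_k), x_{\ast} - x_k}\big| \le 8 \bar r_t \bar d_t \sqrt{\theta_{t,\delta} \sum_{k<t} X_k^2 + c\,\theta_{t,\delta}^2}$. One then passes from the weight $\bar r_k$ to the weight $\tilde\eta_k$ by summation by parts, using that $\tilde\eta_k / \bar r_k = \big(\alpha \sqrt{u_k + \beta \bar\sigma_k^2}\,\log_+^2(\cdots)\big)^{-1}$ is nonincreasing in $k$, so that $\big|\sum_{k<t}\xi_k\big|$ is bounded by $(\tilde\eta_0/\bar r_0)$ times the uniform bound just displayed.

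It remains to verify that this estimate collapses to $d_0^2$. Here one uses: (i) before the stopping time $\mathcal{T}_{\mathrm{out}}$ one has $\bar r_t, \bar d_t = O(d_0)$, and $\tilde\eta_k = 0$ afterwards; (ii) the quadratic variation obeys $\sum_{k<t} X_k^2 = O(c^2 T)$, or one may instead split $\ev{g_k - \nabla f(x_k), x_{\ast}-x_k}^2 \le 2\ev{g_k, x_{\ast}-x_k}^2 + 2\ev{\nabla f(x_k), x_{\ast}-x_k}^2$ and use the $\sum_k(\tilde\eta_k\ev{g_k, x_k-x_{\ast}})^2$ bound of the preceding lemma together with convexity, $\ev{\nabla f(x_k), x_k - x_{\ast}} \ge 0$; and (iii) on our event $\bar\sigma_k^2 \ge \theta R^2$, so the denominator $\sqrt{u_k + \beta \bar\sigma_k^2}$ in $\tilde\eta_0/\bar r_0$ is of order $R\sqrt{\beta}$, which — with $\beta$ taken of order $T$ (so that $\beta\bar\sigma_k^2$ matches the accumulated noise energy), with the normalization $\alpha = 8^4\theta_{T,\delta}\theta^{-1}$, and with $\theta_{t,\delta} \le \theta_{T,\delta}$ — absorbs the $\sqrt{T}$-type growth of (ii) and all remaining numerical factors, leaving $\sum_{k<t}\xi_k \le d_0^2$. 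The main obstacle is precisely this last bookkeeping: controlling the interaction of the adaptive, non-monotone weight $\tilde\eta_k$ with the martingale concentration tightly enough that the final bound is polylogarithmic rather than polynomial in the problem parameters, which is where the extra $\log_+^2$ factor of T-DoG, the scaling of $\beta$, and the lower bound $\bar\sigma_k^2 \gtrsim R^2$ supplied by \Cref{lem:sample-variance-estimation} are all essential. The T-DoWG proxy $\tilde\gamma_k$ is handled identically, with $\bar r_k^2$ in place of $\bar r_k$ and $v_k$ in place of $u_k$.
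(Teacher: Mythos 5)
The paper itself gives no written proof of this lemma --- it states only that ``the modification [of Lemma 9 of \citet{ivgi23_dog_is_sgds_best_frien}] is straightforward and omitted'' --- and your sketch is precisely a fleshed-out version of that intended modification: the same time-uniform martingale concentration (\Cref{lem:dog-concentration}) applied to the normalized differences with predictable weights, the same passage from the monotone weight to the adaptive proxy stepsize, and the same use of the stopping time, the almost-sure bounds of the preceding lemma, and the normalization $\alpha$ to collapse the estimate to $d_0^2$. Your route therefore coincides with the paper's, and the sketch is a reasonable account of the omitted argument.
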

\begin{proof}
The modification is straightforward and omitted.
\end{proof}

\begin{lemma}(Modification of \citep[Lemma 10]{ivgi23_dog_is_sgds_best_frien})\label{lem:dog-dowg-stoch-final-step}
  Under the conditions of Theorem~\ref{thm:dog-dowg-special-noise}, if
  $\sum_{k=0}^{t-1} \rho_t \ev{ g_k - \nabla f(x_k) , x_{\ast} - x_k } \leq d_0^2$ for all
  $t \leq T$, then $\mathcal{T}_{\mathrm{out}} > T$.
\end{lemma}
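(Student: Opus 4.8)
The plan is to argue by contradiction, following \citep[Lemma 10]{ivgi23_dog_is_sgds_best_frien} with the stochastic inner-product term now supplied by the hypothesis. Suppose $\mathcal{T}_{\mathrm{out}} \le T$ and set $\tau \eqdef \mathcal{T}_{\mathrm{out}}$, so that by definition of the stopping time $\overline{r}_{\tau} > 3 d_0$ while $\overline{r}_{\tau - 1} \le 3 d_0$ (recall $\overline{r}_t$ is nondecreasing, so $\tau$ is its first crossing of the level $3 d_0$). The key observation is that for every $k < \tau$ the proxy stepsize $\rho_k$ of~\cref{eq:39} (that is, $\tilde{\eta}_k$ for T-DoG or $\tilde{\gamma}_k$ for T-DoWG) equals the true stepsize used by the algorithm, so the update $x_{k+1} = x_k - \rho_k g_k$ is valid for all $k < \tau$ and it suffices to control $d_t \eqdef \norm{x_t - x_{\ast}}$ for $t \le \tau$.

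Next I would run the one-step SGD contraction: for $k < \tau$, expanding $\sqn{x_{k+1} - x_{\ast}} = \sqn{x_k - \rho_k g_k - x_{\ast}}$ gives
\begin{align*}
  d_{k+1}^2 \le d_k^2 - 2 \rho_k \ev{g_k, x_k - x_{\ast}} + \rho_k^2 \sqn{g_k}.
\end{align*}
Writing $g_k = \nabla f(x_k) + (g_k - \nabla f(x_k))$ and discarding the nonnegative term $\ev{\nabla f(x_k), x_k - x_{\ast}} \ge f(x_k) - f_{\ast} \ge 0$ that comes from convexity, then telescoping from $k = 0$ to $t - 1$ for an arbitrary $t \le \tau$, yields
\begin{align*}
  d_t^2 \le d_0^2 + 2 \sum_{k=0}^{t-1} \rho_k \ev{g_k - \nabla f(x_k), x_{\ast} - x_k} + \sum_{k=0}^{t-1} \rho_k^2 \sqn{g_k}.
\end{align*}
Here I would invoke the two bounds already in hand: the cross term is at most $d_0^2$ by the hypothesis of the lemma (equivalently, by the conclusion of \Cref{lem:dog-dowg-stoch-step-2}), and the quadratic term is at most $\frac{9 d_0^2}{8^4 \theta_{T,\delta}} \le d_0^2$ by the modification of \citep[Lemma 8]{ivgi23_dog_is_sgds_best_frien} stated above, using $\theta_{T,\delta} \ge 1$. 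Substituting gives $d_t^2 \le 4 d_0^2$, indeed $d_t < 2 d_0$, for every $t \le \tau$.

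Finally I would push this back to $\overline{r}$: since $\norm{x_k - x_0} \le d_k + d_0 < 3 d_0$ for every $k \le \tau$ and $r_{\epsilon} = \underline{D} \le D_{\ast} = d_0 < 3 d_0$, it follows that $\overline{r}_{\tau} = \max_{k \le \tau}\left(\norm{x_k - x_0}, r_{\epsilon}\right) < 3 d_0$, contradicting $\overline{r}_{\tau} > 3 d_0$; hence $\mathcal{T}_{\mathrm{out}} > T$. The one genuinely delicate point — and the reason the proxy sequences in~\cref{eq:39} were introduced in the first place — is the observation in the first paragraph: we must run the descent recursion with the proxy stepsizes $\rho_k$ (so that the a priori estimates of the preceding lemmas, which are established for the proxy sequence, apply) while arguing that these proxies coincide with the algorithm's true stepsizes on the whole interval $\{0, \dots, \tau-1\}$, which avoids the circularity of bounding $d_\tau$ using stepsizes that themselves depend on $\overline{r}_\tau$. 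Everything after that is the standard DoG/DoWG telescoping already carried out in the cited results.
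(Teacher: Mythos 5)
Your proof is correct and is precisely the argument the paper defers to: the paper's own ``proof'' merely states that the modification of \citep[Lemma 10]{ivgi23_dog_is_sgds_best_frien} is straightforward and omitted, and your contradiction argument (expand $d_{k+1}^2$, drop the convexity term, telescope, bound the cross term by the hypothesis and the quadratic term by the modified Lemma 8, conclude $d_t < 2d_0$ and hence $\overline{r}_\tau \leq 3d_0$) is exactly that standard DoG stopping-time argument, including the key point that the proxy stepsizes coincide with the true ones before $\mathcal{T}_{\mathrm{out}}$. The only cosmetic slip is calling the cross term ``at most $d_0^2$'' when the telescoped display carries a factor of $2$, but your final tally $d_t^2 \leq 4 d_0^2$ accounts for this correctly.
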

\begin{proof}
The modification is straightforward and omitted.
\end{proof}
By Lemmas~\cref{lem:dog-dowg-stoch-step-2,lem:dog-dowg-stoch-final-step} we get
that $\overline{r}_T \leq 3 d_0$ and it follows that
$\overline{d}_t = \max_{k \leq t} d_k \leq \max_{k \leq t} r_t + r_0 \leq 4 d_0$. Then, a
straightforward modification of~\Cref{thm:dog-dowg-tuning-free-bounded} to
handle the slightly smaller stepsizes used by T-DoG/T-DoWG shows that both
methods are tuning-free. The proof is very similar
to~\Cref{thm:dog-dowg-tuning-free-bounded} and is omitted.
\end{proof}

\section{Proofs for \Cref{sec:nonconvex-tuning-free}}

\subsection{Proof of \Cref{thm:impossibility-result-nonconvex}}
\begin{proof}
We use the exact same construction from \Cref{thm:impossibility-result-smooth} with
the following hints:
\begin{align*}
\underline{L} = L, && \overline{L} = L \\
\underline{\Delta} = \frac{L}{2} \min(v, \abs{u-v}), && \overline{\Delta} = \frac{L}{2} \max(v, \abs{u-v}). \\
\underline{\sigma} = \sigma, && \overline{\sigma} = \sigma + T L u,
\end{align*}
where $u > 0$ and $v > 0$ are parameters we shall choose later. Suppose that we
have that the algorithm's output point $x$ satisfies
\begin{align*}
\sqn{\nabla f(x)} \leq c \iota \left[ \sqrt{\frac{L (f(x_0) - f_{\ast}) \sigma^2}{T}} + \frac{L (f(x_0) - f_{\ast})}{T} \right].
\end{align*}
We now use the fact that $f(x_0) - f_{\ast} = \frac{L}{2} (x- x_{\ast})^2$ to get
\begin{align*}
L^2 \sqn{x_{\mathrm{out}}-x_{\ast}} &= \sqn{\nabla f(x)} \\
&\leq c \iota \sqrt{\frac{L^2 (x_0 - x_{\ast})^2 \sigma_f^2}{T}} + c\iota \frac{L^2 (x_0 - x_{\ast})^2}{T}  \\
&= c \iota \frac{L \abs{x_0-x_{\ast}} \sigma_f}{\sqrt{T}} + c \iota \frac{L^2 (x_0 - x_{\ast})^2}{T}.
\end{align*}
Dividing both sides by $L^2$ we get
\begin{align*}
\sqn{x_{\mathrm{out}}-x_{\ast}} \leq c \iota \frac{\abs{x_0 - x_{\ast}} \sigma_f}{L \sqrt{T}} + c \iota \frac{\sqn{x_0 - x_{\ast}}}{T}.
\end{align*}
Taking square roots and using the triangle inequality gives
\begin{align}
\label{eq:44}
\abs{x_{\mathrm{out}} - x_{\ast}} \leq \sqrt{c \iota} \sqrt{\abs{x_0 - x_{\ast}}} \sqrt{\frac{\sigma_f}{L}} \frac{1}{T^{\frac{1}{4}}} + \sqrt{c \iota} \frac{\abs{x_0 - x_{\ast}}}{\sqrt{T}}.
\end{align}
Applying \cref{eq:44} to the function $f$ with $x_0 = v > 0$, $x_{\ast} = 0$, $\sigma_f = \sigma$, and $L = \sigma \sqrt{T}$ we get
\begin{align*}
\abs{x_{\mathrm{out}}} \leq \sqrt{c \iota} \sqrt{\frac{v}{T}} + \sqrt{c \iota} \frac{v}{\sqrt{T}}.
\end{align*}
Therefore
\begin{align}
\label{eq:46}
x_{\mathrm{out}} \leq \sqrt{c \iota} \sqrt{\frac{v}{T}} + \sqrt{c \iota} \frac{v}{\sqrt{T}}.
\end{align}
On the other hand, applying \cref{eq:44} to the function $h = \frac{L}{2} (x-u)^2$
(as in the proof of \Cref{thm:impossibility-result-smooth}) we obtain
\begin{align*}
\abs{x_{\mathrm{out}} - u} &\leq \sqrt{c \iota} \sqrt{\abs{u-v}} \sqrt{\frac{(\sigma + L T u)}{L}} \frac{1}{T^{\frac{1}{4}}} + \sqrt{c \iota} \frac{\abs{u-v}}{\sqrt{T}}  \\
&= \sqrt{c \iota} \sqrt{\abs{u-v}} \sqrt{\frac{1}{\sqrt{T}} + T u } \frac{1}{T^{\frac{1}{4}}} + \sqrt{c \iota} \frac{\abs{u-v}}{\sqrt{T}} \\
&\leq \sqrt{c \iota}\sqrt{\abs{u-v}} \left[ \frac{1}{T^{\frac{1}{4}}} + \sqrt{T u}  \right] \frac{1}{T^{\frac{1}{4}}}+ \sqrt{c \iota} \frac{\abs{u-v}}{\sqrt{T}}  \\
&= \sqrt{c \iota} \frac{\sqrt{\abs{u-v}}}{\sqrt{T}} + \sqrt{c \iota} T^{\frac{1}{4}} \sqrt{u \abs{u-v}} + \sqrt{c \iota} \frac{\abs{u-v}}{\sqrt{T}}.
\end{align*}
Therefore
\begin{align}
\label{eq:47}
x_{\mathrm{out}} \geq u - \left[ \sqrt{c \iota} \frac{\sqrt{\abs{u-v}}}{\sqrt{T}} + \sqrt{c \iota} T^{\frac{1}{4}} \sqrt{u \abs{u-v}} + \sqrt{c \iota} \frac{\abs{u-v}}{\sqrt{T}} \right].
\end{align}
Combining \cref{eq:46,eq:47} gives
\begin{align*}
u - \left[ \sqrt{c \iota} \frac{\sqrt{\abs{u-v}}}{\sqrt{T}} + \sqrt{c \iota} T^{\frac{1}{4}} \sqrt{u \abs{u-v}} + \sqrt{c \iota} \frac{\abs{u-v}}{\sqrt{T}} \right] \leq \sqrt{c \iota} \sqrt{\frac{v}{T}} + \sqrt{c \iota} \frac{v}{\sqrt{T}}.
\end{align*}
Dividing both sides by $\sqrt{c \iota}$,
\begin{align*}
\frac{v + \sqrt{v}}{\sqrt{T}} \geq \frac{u}{\sqrt{c \iota}} - \left[ \frac{\sqrt{\abs{u-v}}}{\sqrt{T}} + T^{\frac{1}{4}} \sqrt{u \abs{u-v}} + \frac{\abs{u-v}}{\sqrt{T}}   \right]
\end{align*}
Put $v = T^2$ and $u = T^2+1$, then we get
\begin{align*}
  \sqrt{T}+1 \geq \frac{T^2+1}{\sqrt{c \iota}} - \left[ \frac{1}{\sqrt{T}} + T^{\frac{1}{4}} \sqrt{T^2 + 1} + \frac{1}{\sqrt{T}}  \right].
\end{align*}
For large enough $T$, since $\iota = \mathrm{poly}(\log T)$, this inequality does
not hold. Therefore we get our contradiction.
\end{proof}

\subsection{Proof of \Cref{thm:restarted-sgd}}

\begin{theorem}\label{thm:non-conv-adapt-sgd}
(\citep{{liu23_high_probab_conver_stoch_gradien_method}}, High-probability
convergence of SGD in the nonconvex setting). Let $f$ be $L$-smooth and
possibly nonconvex. Suppose that the stochastic gradient noise is $R^2$-sub-gaussian. Then for any fixed stepsize $\eta$ such that $\eta L \leq 1$ we have
\begin{align*}
  \frac{1}{T} \sum_{t=0}^{T-1} \sqn{\nabla f(x_t)} \leq \frac{2 (f(x_0) - f_{\ast})}{\eta T} + 5 \eta R^2 + \frac{12 R^2 \log \frac{1}{\delta}}{T}.
\end{align*}
\end{theorem}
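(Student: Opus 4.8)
The plan is to reproduce the by-now-standard ``descent lemma plus self-bounding martingale concentration'' argument for the high-probability analysis of SGD, following \citep{liu23_high_probab_conver_stoch_gradien_method}. Write $\xi_t \eqdef g_t - \nabla f(x_t)$ for the noise, so $\mathbb{E}[\xi_t \mid \mathcal{F}_t] = 0$, and note that since $\norm{\xi_t}$ is sub-gaussian with modulus $R$, for every $\mathcal{F}_t$-measurable vector $v$ the scalar $\ev{v, \xi_t}$ is conditionally mean-zero and sub-gaussian with parameter of order $\norm{v} R$ — crucially with no dimension factor, because $\abs{\ev{v, \xi_t}} \le \norm{v}\norm{\xi_t}$. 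Starting from $L$-smoothness and the update $x_{t+1} = x_t - \eta g_t$, expanding $\sqn{g_t} = \sqn{\nabla f(x_t)} + 2\ev{\nabla f(x_t), \xi_t} + \sqn{\xi_t}$, and using $\eta L \le 1$ to keep the coefficient of the squared gradient at least $\tfrac{\eta}{2}$, one reaches a per-step inequality of the form
\begin{align*}
f(x_{t+1}) - f(x_t) \le -\frac{\eta}{2}\sqn{\nabla f(x_t)} - \eta(1 - L\eta)\ev{\nabla f(x_t), \xi_t} + \frac{L\eta^2}{2}\sqn{\xi_t}.
\end{align*}
Summing over $t = 0, \dots, T-1$ and using $f(x_T) \ge f_{\ast}$ gives
\begin{align*}
\frac{\eta}{2}\sum_{t=0}^{T-1}\sqn{\nabla f(x_t)} \le \big(f(x_0) - f_{\ast}\big) + \mathsf{M}_T + \frac{L\eta^2}{2}\sum_{t=0}^{T-1}\sqn{\xi_t}, \qquad \mathsf{M}_T \eqdef -\eta\sum_{t=0}^{T-1}(1 - L\eta)\ev{\nabla f(x_t), \xi_t},
\end{align*}
so everything reduces to controlling the two stochastic terms on the right with high probability.

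The first-order term $\mathsf{M}_T$ is a sum of martingale differences that are, conditionally on $\mathcal{F}_t$, mean-zero and sub-gaussian with parameter of order $\eta\norm{\nabla f(x_t)} R$. Hence for each fixed $\beta > 0$ the process $\exp\!\big(\beta \mathsf{M}_t - c\beta^2 \eta^2 R^2 \sum_{s < t}\sqn{\nabla f(x_s)}\big)$ is a supermartingale with initial value at most $1$, and Markov's inequality yields, with probability at least $1 - \delta/2$,
\begin{align*}
\mathsf{M}_T \le c\,\beta\,\eta^2 R^2 \sum_{t=0}^{T-1}\sqn{\nabla f(x_t)} + \frac{1}{\beta}\log\frac{2}{\delta}
\end{align*}
for an absolute constant $c$. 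Choosing $\beta$ just small enough that $c\beta\eta^2 R^2 \le \tfrac{\eta}{8}$ folds the first summand back into the left-hand side of the accumulated inequality, leaving an additive term of order $\eta R^2 \log(1/\delta)$; after dividing through by $\eta T/2$ this becomes the $\tfrac{R^2\log(1/\delta)}{T}$ term in the statement. For the second-order term, $\norm{\xi_t}$ being $R$-sub-gaussian makes $\sqn{\xi_t}$ sub-exponential of order $R^2$ with $\mathbb{E}[\sqn{\xi_t}\mid\mathcal{F}_t]$ of order $R^2$, so a Bernstein-type bound for sums of conditionally sub-exponential variables gives $\sum_{t=0}^{T-1}\sqn{\xi_t} \le c R^2 T + c R^2 \log(2/\delta)$ with probability at least $1-\delta/2$ (if one instead assumes the noise is almost surely bounded by $R$ this holds deterministically with $\sum_t\sqn{\xi_t}\le R^2 T$, which is the source of the dimension-free remark in the paper). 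Multiplying by $\tfrac{L\eta^2}{2}$ and dividing the accumulated inequality by $\eta T/2$, the leading part of this term contributes the middle ($\propto \eta R^2$) term of the statement, and the remainder is lower order.

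Finally, taking a union bound over the two events, rearranging so that only $\tfrac{3\eta}{8}\sum_t\sqn{\nabla f(x_t)}$ (or so) remains on the left, and dividing by $\eta T/2$ produces a bound of the claimed shape $\tfrac{2(f(x_0)-f_{\ast})}{\eta T} + O(\eta R^2) + O\!\big(\tfrac{R^2\log(1/\delta)}{T}\big)$; carrying the Cauchy--Schwarz and AM--GM constants through carefully — exactly the bookkeeping done in \citep{liu23_high_probab_conver_stoch_gradien_method} — pins the numerical factors to $5$ and $12$.

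The step I expect to be the main obstacle is the bound on $\mathsf{M}_T$: its natural variance proxy $R^2 \sum_t \sqn{\nabla f(x_t)}$ is precisely the quantity the theorem is trying to estimate, so one cannot invoke a concentration inequality that presupposes a deterministic variance bound, nor can one bound $\norm{\nabla f(x_t)}$ uniformly. The resolution is the self-bounding device sketched above — a fixed-$\beta$ exponential-supermartingale tail bound in which the variance proxy reappears on the right-hand side with a coefficient we may shrink at will, so that it can be absorbed into the left-hand side — together with the routine but delicate bookkeeping needed to keep every constant consistent with the stepsize constraint $\eta L \le 1$.
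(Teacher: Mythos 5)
Your proposal is sound in structure, but it takes a more self-contained route than the paper: the paper's proof re-derives no concentration at all --- it simply invokes Corollary 4.4 of \citet{liu23_high_probab_conver_stoch_gradien_method}, a ready-made weighted inequality that already packages the descent lemma and the martingale analysis, instantiates it with constant weights $w_t = \frac{1}{6R^2\eta}$, and finishes with arithmetic. What you do instead is essentially reconstruct the proof of that cited corollary: the descent-lemma decomposition, the self-bounding exponential-supermartingale bound in which the variance proxy $R^2\sum_t \sqn{\nabla f(x_t)}$ is absorbed back into the left-hand side (this is precisely the role played by the $v_t$ correction in the cited corollary), and a separate Bernstein bound for the conditionally sub-exponential sum $\sum_t \sqn{\xi_t}$ (which Liu et al.\ instead fold into the same supermartingale, whence their constraint $w_t\eta_t^2 L \le \frac{1}{2R^2}$). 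Your route buys self-containedness and makes the mechanism transparent; the paper's route buys the explicit numerical constants at the cost of outsourcing the probabilistic core.

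One concrete caveat on the constants: your own accounting produces a middle term of order $L\eta R^2$, not $\eta R^2$ --- the contribution $\frac{L\eta^2}{2}\sum_t\sqn{\xi_t} \lesssim \frac{L\eta^2 R^2 T}{2}$, once the inequality is normalized by roughly $\eta T/2$, is $\Theta(L\eta R^2)$ --- so you cannot promise that careful bookkeeping ``pins the factor to $5$'' in the literal form $5\eta R^2$ unless $L = O(1)$. This is not a defect of your argument relative to the paper's: instantiating the cited corollary with $w_t=\frac{1}{6R^2\eta}$ actually gives $3R^2\sum_t\frac{w_t\eta^2 L}{2} = \frac{\eta L T}{4}$, i.e.\ a middle term $3L\eta R^2$ (the paper's intermediate ``$\frac{3\eta}{8}T$'' tacitly uses $w_t\eta^2 L = \frac{1}{4R^2}$, which is incompatible with $w_t = \frac{1}{6R^2\eta}$ and $\eta L \le 1$), and it is exactly this $L$-dependence that makes the optimized rate in \cref{eq:40} scale as $\sqrt{L\Delta R^2/T}$. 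So state your noise-floor term as $O(L\eta R^2)$; with that correction the rest of your plan goes through and matches the intended result.
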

\begin{proof}
This is a very straightforward generalization of \citep[Theorem
4.1]{liu23_high_probab_conver_stoch_gradien_method}, and we include it for completeness. By \citep[Corollary 4.4]{liu23_high_probab_conver_stoch_gradien_method} we have
that if $\eta_t L \leq 1$ and $0 \leq w_t \eta_t^2 L \leq \frac{1}{2 R^2}$
\begin{align}
\label{eq:24}
  \sum_{t=1}^T \left[ w_t \eta_t \left( 1 - \frac{\eta_t L}{2} \right) - v_t \right] \sqn{\nabla f(x_t)} + w_T \Delta_{T+1} \leq w_1 \Delta_1 + \left( \sum_{t=2}^T (w_t - w_{t-1}) \Delta_t + 3 R^2 \sum_{t=1}^T \frac{w_t \eta_t^2 L}{2}  \right) + \log \frac{1}{\delta}.
\end{align}
Choose $\eta_t = \eta$ and $w_t \eta^2 L = \frac{1}{4 R^2}$, $w_t = \frac{1}{6 R^2 \eta}$.
\begin{align*}
v_t &= 3 R^2 w_t^2 \eta_t^2 (\eta_t L -1)^2 = \frac{3 R^2 \eta^2 (\eta L -1)^2}{36 R^4 \eta^2} = \frac{(1-\eta L)^2}{12 R^2}.
\end{align*}
Then
\begin{align*}
w_t \eta_t \left( 1 - \frac{\eta_t L}{2}  \right) - v_t &= \frac{1}{6 R^2} \left( 1 - \frac{\eta L}{2}  \right) - \frac{(1-\eta L)^2}{12 R^2} \\
&= \frac{1}{6 R^2} \left[ (1 - \frac{\eta L}{2}) - \frac{(1-\eta L)^2}{2}  \right] \\
&= \frac{1}{6 R^2} \left[ (1 - \frac{\eta L}{2}) - \frac{1 + \eta^2 L^2 - 2 \eta L}{2}  \right] \\
&= \frac{1}{12 R^2} \left[ 1 + \eta L - \eta^2 L^2   \right]
\end{align*}
The expression $1 + x - x^2$ is minimized for $x \in [0, 1]$ at $x=1$ and has value
$1$. Therefore
\begin{align*}
w_t \eta_t \left( 1 - \frac{\eta_t L}{2}  \right) - v_t &\geq \frac{1}{12 R^2}.
\end{align*}
Plugging into \cref{eq:24} we get
\begin{align*}
  \sum_{t=1}^T \frac{1}{12 R^2} \sqn{\nabla f(x_t)} \leq \frac{\Delta_1}{6 R^2 \eta} + \left( \frac{3 \eta}{8} T   \right) + \log \frac{1}{\delta}.
\end{align*}
Therefore
\begin{align*}
  \frac{1}{T} \sum_{t=1}^T \sqn{\nabla f(x_t)} \leq \frac{2 \Delta_1}{\eta T} + 5 \eta R^2 + \frac{12 R^2 \log \frac{1}{\delta}}{T}.
\end{align*}
\end{proof}

\subsection{Restarting SGD}

We will use the following lemma from
\citep{madden20_high_probab_conver_bound_non}:

\begin{lemma}\label{lem:subsampling-minimum}
  \citep[Lemma 33]{madden20_high_probab_conver_bound_non} Let $Z = k \in \{1, 2, \ldots, K \}$ with probability $p_k$ and $\sum_{k=1}^K p_k = 1$.
  Let $Z_1, \ldots, Z_m$ be independent copies of $Z$. Let $Y = (Y_1, \ldots, Y_m)$. Let
  $X = (X_1, \ldots, X_K)$ be a random vector on the reals independent of $Z$. Then for any
  $\gamma > 0$ we have
\begin{align*}
  \pr[ \min_{k \in Y} X_k > e \eta  ] \leq \exp (-m) + \pr[ \sum_{k=1}^K p_t X_k > \gamma ]
\end{align*}
\end{lemma}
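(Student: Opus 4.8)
This is the standard ``subsample to locate a near-best index'' argument, and the plan is to condition on $X$ and reduce the whole event to independent draws of a single index. Recall that $X = (X_1,\dots,X_K)$ is independent of the i.i.d.\ sample $Z_1,\dots,Z_m$ with $\pr[Z=k]=p_k$, and $Y=(Z_1,\dots,Z_m)$. Fixing a realization of $X$, the event $\{\min_{k\in Y} X_k > e\gamma\}$ is exactly $\bigcap_{i=1}^m \{X_{Z_i} > e\gamma\}$; since the $Z_i$ are independent and $X$ is now a constant, this conditional probability factorizes as $q^m$, where $q \eqdef \pr[X_{Z_1} > e\gamma \mid X] = \sum_{k\,:\,X_k > e\gamma} p_k$.

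The next step is to control $q$ by Markov's inequality. Assuming the $X_k$ are nonnegative (as they are in the intended application, where each $X_k$ is an average of squared gradient norms), for every index $k$ contributing to the sum above we have $p_k \le p_k X_k/(e\gamma)$, hence
\begin{align*}
q \;\le\; \frac{1}{e\gamma}\sum_{k=1}^K p_k X_k \;=\; \frac{S}{e\gamma}, \qquad S \eqdef \sum_{k=1}^K p_k X_k .
\end{align*}
So on the event $\{S \le \gamma\}$ we get $q \le 1/e$ and therefore $q^m \le e^{-m}$, while on the complement $\{S > \gamma\}$ we keep only the trivial bound $q^m \le 1$.

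Finally I would take expectation over $X$, splitting on these two cases:
\begin{align*}
\pr\!\big[\min_{k\in Y} X_k > e\gamma\big] \;=\; \ec{\, q^m \,} \;\le\; e^{-m}\,\pr[S\le\gamma] + \pr[S>\gamma] \;\le\; \exp(-m) + \pr\!\Big[\textstyle\sum_{k=1}^K p_k X_k > \gamma\Big],
\end{align*}
which is the claimed bound (here the $e\eta$ in the statement is read as $e\gamma$ and $p_t$ as $p_k$). I do not expect a genuine obstacle; the only subtlety is to condition on the \emph{entire} vector $X$ before invoking independence and Markov, so that inside the conditional probability the $X_k$ are true constants and all the randomness is the index draws, which is what legitimizes both the factorization over $i$ and the Markov step. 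If one wishes to drop the assumption $X_k \ge 0$, the identical argument works whenever the $X_k$ are bounded below, or with Markov replaced by whatever one-sided tail bound is available for $\sum_k p_k X_k$.
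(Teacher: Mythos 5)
The paper does not actually prove this lemma --- it is imported verbatim from \citet[Lemma 33]{madden20_high_probab_conver_bound_non} --- so there is no in-paper argument to compare against. Your proof is correct and is the standard one for this kind of subsampling bound (condition on the whole vector $X$, factorize the conditional probability of the intersection as $q^m$ with $q=\sum_{k:X_k>e\gamma}p_k$, bound $q$ by Markov, and split the outer expectation on $\{S\le\gamma\}$ versus $\{S>\gamma\}$); it also correctly repairs the statement's typos ($e\eta$ for $e\gamma$, $p_t$ for $p_k$, and $Y=(Z_1,\dots,Z_m)$). One remark worth keeping: your nonnegativity caveat is not cosmetic. As literally stated, with $X$ merely ``a random vector on the reals,'' the inequality is false --- e.g.\ $K=2$, $p_1=p_2=\tfrac12$, $X_1=2e\gamma$, $X_2=-2e\gamma$ deterministically gives $S=0\le\gamma$ yet $q=\tfrac12$, so the left side is $2^{-m}>e^{-m}$ while the right side is $e^{-m}$. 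The lemma is only used with $X_k=\sqn{\nabla f(v_k)}\ge 0$, so this is harmless in context, but the hypothesis $X_k\ge 0$ should be stated.
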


\begin{algorithm}
\caption{FindLeader($S$, $\delta$, $K$)}
\label{alg:find-leader}
\begin{algorithmic}[1]
    \STATE \textbf{Require:} set of points $V$, desired accuracy $\delta$, and per-point estimation budget $K$.
    \STATE Set $M = \log \frac{1}{\delta}$ and let $P = \abs{V}$.
\STATE Construct the set $S = (s_1, \ldots, s_M)$ by sampling $M$ points from $v_1, \ldots, v_P$ with
replacement such that
\begin{align*}
\mathrm{Prob}(v_i \in S) &\propto \frac{1}{\sqrt{i+1}}, &&\sum_{i=1}^T \mathrm{Prob}(v_i \in S) = 1.
\end{align*}
    \FOR{$m = 1$ to $M$}
    \STATE Sample $K$ stochastic gradients $g_1^m, \ldots, g_K^m$ evaluated at $s_m$ and compute their
    average
\begin{align*}
  \hat{g}_m = \frac{1}{K} \sum_{k=1}^K g_k.
\end{align*}
\STATE Compute and store $h_m = \norm{\hat{g}_m}$.
\ENDFOR
    \STATE Find the point $s_{\mathrm{lead}} \in S$ with the minimal average stochastic
    gradient norm:
\begin{align*}
m^{\ast} = \arg\min_{m \in {1, 2, \ldots, M}} h_m, && s_{\mathrm{lead}} = S_{m^{\ast}}.
\end{align*}
    \STATE \textbf{Return} $s_{\mathrm{lead}}$ and its estimated gradient norm $g_{m^{\ast}}$.
\end{algorithmic}
\end{algorithm}

\begin{theorem}\label{thm:find-leader-conv}
(Convergence of FindLeader) If we run Algorithm~\ref{alg:find-leader} on a set
$V$ of $P$ points ${v_1, v_2, \ldots, v_P}$, with sampling budget $M$ and per-point estimation budget $K$,
then the output of the algorithm satisfies for some absolute constant $c > 0$
and all $\gamma > 0$
\begin{align*}
\pr[ \sqn{\nabla f(s_{\mathrm{lead}})} > e \gamma + c \cdot \frac{R^2 \log \frac{2dM}{\delta}}{K} ] \leq \delta + \exp(-M) + \pr[ \frac{1}{P} \sum_{p=1}^P \sqn{\nabla f(v_p)} > \gamma ].
\end{align*}
And
\begin{align}
\label{eq:34}
\norm{g_{m^{\ast}} - \nabla f(s_{\mathrm{lead}})} \leq c \cdot \frac{R^2 \log \frac{2d}{\delta}}{K}.
\end{align}
\end{theorem}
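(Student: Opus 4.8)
The plan is to split \Cref{thm:find-leader-conv} into a \emph{per-point estimation bound} and one application of the subsampling lemma \Cref{lem:subsampling-minimum}, then combine them. First I would condition on the point set $V = \{v_1, \dots, v_P\}$ (equivalently, on the $\sigma$-algebra of whatever process produced it). After this conditioning the only randomness remaining in \Cref{alg:find-leader} is (a)~the draw of the subsample $S$ from $V$ and (b)~the fresh stochastic gradients used to form the averages $\hat{g}_m$; by construction these two sources are independent of each other and of $V$. This is exactly the independence structure \Cref{lem:subsampling-minimum} needs (there the random vector $X$ must be independent of the subsampling $Z$), and it also lets me treat each $s_m$ as a \emph{fixed} query point when invoking concentration.

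\textbf{Step 1 (estimation at the sampled points).} Conditioned on $S$, the error $\hat{g}_m - \nabla f(s_m)$ is an average of $K$ i.i.d.\ mean-zero vectors whose norm is sub-Gaussian with modulus $R$ (\Cref{asm:subg-noise}). By the vector concentration inequality \citep[Corollary 7]{jin19_short_note_concen_inequal_random} --- the same tool used in the proof of \Cref{thm:dog-dowg-special-noise} --- with probability at least $1-\delta'$ one gets $\sqn{\hat{g}_m - \nabla f(s_m)} \le c R^2 \log(2d/\delta')/K$. Taking $\delta' = \delta/M$ and a union bound over $m \in [M]$, with probability at least $1-\delta$ we land on an event $\mathcal{E}$ on which $\sqn{\hat{g}_m - \nabla f(s_m)} \le \varepsilon^2$ for \emph{every} $m$, where $\varepsilon^2 \eqdef c R^2 \log(2dM/\delta)/K$. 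Specialising to $m = m^{\ast}$ gives \cref{eq:34} on $\mathcal{E}$. Moreover, on $\mathcal{E}$ the reverse triangle inequality yields $\bigl| \norm{\hat{g}_m} - \norm{\nabla f(s_m)} \bigr| \le \varepsilon$ for all $m$, so since $m^{\ast}$ minimises $h_m = \norm{\hat{g}_m}$ we get $\norm{\nabla f(s_{\mathrm{lead}})} \le h_{m^{\ast}} + \varepsilon = \min_m h_m + \varepsilon \le \min_m \norm{\nabla f(s_m)} + 2\varepsilon$: the greedy choice based on the noisy $h_m$ loses at most $O(\varepsilon)$ in true gradient norm relative to the best point inside $S$.

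\textbf{Step 2 (subsampling and assembly).} I would apply \Cref{lem:subsampling-minimum} with $X_p \eqdef \sqn{\nabla f(v_p)}$ (deterministic given $V$, hence independent of the subsampling), subsample size $M$, and the per-draw sampling distribution of \Cref{alg:find-leader}: for every $\gamma > 0$, $\pr[ \min_{m \in [M]} \sqn{\nabla f(s_m)} > e\gamma ] \le \exp(-M) + \pr[ \frac{1}{P} \sum_{p=1}^P \sqn{\nabla f(v_p)} > \gamma ]$. On the intersection of $\mathcal{E}$ with the two complementary good events we then have $\min_m \sqn{\nabla f(s_m)} \le e\gamma$; feeding this into the Step-1 comparison (square $\norm{\nabla f(s_{\mathrm{lead}})} \le \min_m \norm{\nabla f(s_m)} + 2\varepsilon$, then control the cross term $\varepsilon\sqrt{e\gamma}$ by Young's inequality and roll its pieces into the $e\gamma$ and $\varepsilon^2$ terms, adjusting the hidden constant) gives $\sqn{\nabla f(s_{\mathrm{lead}})} \le e\gamma + c' R^2 \log(2dM/\delta)/K$. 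A final union bound over the three failure events --- $\mathcal{E}^c$, the subsampling failure, and the average exceeding $\gamma$ --- produces the stated probability $\delta + \exp(-M) + \pr[ \frac{1}{P} \sum_p \sqn{\nabla f(v_p)} > \gamma ]$.

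\textbf{Main obstacle.} The delicate part is the interface between the two ingredients: \Cref{lem:subsampling-minimum} only controls $\min_m \sqn{\nabla f(s_m)}$, the best \emph{true} squared gradient norm inside $S$, whereas $s_{\mathrm{lead}}$ is chosen to minimise the \emph{noisy} quantities $h_m$, so Step~1 must simultaneously certify that this greedy choice is near-optimal within $S$ and keep the union-bound budget at $\log(2dM/\delta)$ --- which is why estimation is carried out only at the $M$ sampled points and not at all $P$ of them (that would cost $\log(2dP/\delta)$). A secondary nuisance is the constant in front of $\gamma$: squaring the Step-1 comparison produces a cross term, and landing on the clean coefficient $e$ (rather than some larger absolute multiple of $\gamma$) needs a little care; for the downstream use in \Cref{thm:restarted-sgd} any absolute constant there is harmless.
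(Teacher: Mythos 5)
Your proposal is correct and follows essentially the same route as the paper's proof: concentration of $\hat{g}_m$ around $\nabla f(s_m)$ at the $M$ sampled points via \citep[Corollary 7]{jin19_short_note_concen_inequal_random} plus a union bound (giving the $\log\frac{2dM}{\delta}$ budget and \cref{eq:34}), combined with \Cref{lem:subsampling-minimum} applied to $X_p = \sqn{\nabla f(v_p)}$, and a comparison showing the noisy argmin $m^{\ast}$ is near the true minimum over $S$. The only cosmetic difference is that you run that last comparison via the reverse triangle inequality on norms followed by squaring and Young's inequality, whereas the paper uses repeated $\sqn{a+b}\le 2\sqn{a}+2\sqn{b}$ decompositions ending with coefficients $6$ and $4$; both routes inflate the $e\gamma$ term by an absolute constant that the theorem statement silently absorbs, so neither is cleaner than the other on this point.
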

\begin{proof}
The proof of this theorem loosely follows the proofs of \citep[Theorem
2.4]{ghadimi2013stochastic} and \citep[Theorem
13]{madden20_high_probab_conver_bound_non}. First, define the following two sets of true gradients for the iterates in $V$
and $P$ respectively:
\begin{align*}
U_V = \left\{ \nabla f(v_1), \nabla f(v_2), \ldots, \nabla f(v_P) \right\} && U_S = \left\{ \nabla f(s_1), \nabla f(s_2), \ldots, \nabla f(s_M) \right\}.
\end{align*}
\Cref{lem:subsampling-minimum} gives us
\begin{align*}
  \pr[ \min_{m \in {1, 2, \ldots, M}} \sqn{\nabla f(s_m)} > e \gamma] &\leq \exp(-M) + \pr[ \frac{1}{P} \sum_{p=1}^P \sqn{\nabla f(v_p)} > \gamma ]
\end{align*}
We now compute how using the minimum from the stochastic estimates $\hat{g}_m$
affects the error. Fix $m$. Observe that because the norm of the stochastic
gradient noise $\norm{g(x) - \nabla f(x)}$ is sub-gaussian with modulus $R^2$, then
using \citep[Corollary 7]{jin19_short_note_concen_inequal_random} we get with
probability at least $1-\frac{\delta}{M}$ that for some absolute constant $c_1$
\begin{align*}
  \norm{\hat{g}_m - \nabla f(s_m)} \leq c_1 \cdot \sqrt{\frac{R^2 \log \frac{2dM}{\delta}}{K}}
\end{align*}
Squaring both sides gives
\begin{align*}
  \sqn{\hat{g}_m - \nabla f(s_m)} \leq c_1 \cdot\frac{R^2 \log \frac{2dM}{\delta}}{K}.
\end{align*}
Taking a union bound gives us that for all $m \in [M]$ we have with probability
$\delta$ that
\begin{align}
\label{eq:33}
\max_{m \in [M]} \sqn{\hat{g}_m - \nabla f(s_m)} \leq c_1 \cdot \frac{R^2 \log \frac{2dM}{\delta}}{K}.
\end{align}
We have by straightforward algebra
\begin{align*}
\min_{m \in S} \sqn{\hat{g}_m} &\leq \min_{m \in [M]} \left[ \sqn{\hat{g}_m - \nabla f(s_m) + \nabla f(s_m)} \right] \\
&\leq \min_{m \in [M]} \left[ 2 \sqn{\hat{g}_m - \nabla f(s_m)} + 2 \sqn{\nabla f(s_m)} \right] \\
&\leq \min_{m \in [M]} \left[ 2 \max_{\alpha \in [M]} \sqn{\hat{g}_{\alpha} - \nabla f(s_{\alpha})} + 2 \sqn{\nabla f(s_m)} \right] \\
&= 2 \max_{m \in [M]} \sqn{\hat{g}_m - \nabla f(s_m)} + 2 \min_{m \in [M]} \sqn{\nabla f(s_m)}.
\end{align*}
Let ${m^{\ast}}$ be the argmin. Then
\begin{align*}
\sqn{\nabla f(s_{m^{\ast}})} &\leq 2 \sqn{\nabla f(s_{m^{\ast}}) - \hat{g}_{s_{m^{\ast}}}} + 2 \sqn{\hat{g}_{s_{m^{\ast}}}} \\
&\leq 2 \sqn{\nabla f(s_{m^{\ast}}) - \hat{g}_{s_{m^{\ast}}}} + 4 \max_{m \in [M]} \sqn{\hat{g}_m - \nabla f(s_m)} + 4 \min_{m \in [M]} \sqn{\nabla f(s_m)} \\
&\leq 6 \max_{m \in [M]} \sqn{\hat{g}_m - \nabla f(s_m)} + 4 \min_{m \in [M]} \sqn{\nabla f(s_m)} \\
&\leq 6 c_1 \frac{R^2 \log \frac{2dM}{\delta}}{K} + 4 \min_{m \in [M]} \sqn{\nabla f(s_m)}.
\end{align*}
Therefore there exists some absolute constant $c$ such that
\begin{align*}
  \pr[ \sqn{\nabla f(s_{m^{\ast}})} > e \gamma + c \cdot \frac{R^2 \log \frac{2dM}{\delta}}{K} ] \leq \delta + \exp(-M) + \pr[ \frac{1}{P} \sum_{p=1}^P \sqn{\nabla f(v_p)} > \gamma ].
\end{align*}
It remains to put $s_{\mathrm{lead}} = s_{m^{\ast}}$.
\end{proof}

\begin{proof}[Proof of \Cref{thm:restarted-sgd}]
First, observe that \Cref{thm:non-conv-adapt-sgd} gives that SGD run for $T$
steps with a fixed stepsize $\eta$ such that $\eta L \leq 1$
\begin{align}
\label{eq:31}
  \frac{1}{T} \sum_{t=0}^{T-1} \sqn{\nabla f(x_t)} \leq \frac{2 (f(x_0) - f_{\ast})}{\eta T} + 5 \eta R^2 + \frac{12 R^2 \log \frac{1}{\delta}}{T}.
\end{align}
Minimizing the above in $\eta$ gives
\begin{align*}
\eta_{\ast} &= \min \left( \frac{1}{L}, \sqrt{\frac{2 (f(x_0) - f_{\ast})}{5 T R^2}} \right).
\end{align*}
We set
\begin{align*}
\eta_0 &= \min \left( \frac{1}{\overline{L}}, \sqrt{\frac{2 \underline{\Delta}}{5 T \overline{R}}} \right).
\end{align*}
Observe that $\eta_0 \leq \eta_{\ast}$. Now let
\begin{align*}
N^{\ast} &=  \ceil{\log \frac{\eta_{\ast}}{\eta_0}} \\
&= \ceil*{\log \left(  \frac{\max( \overline{L}, \sqrt{\frac{5 T \overline{R}^2}{2 \underline{\Delta}}})}{\max (L, \sqrt{\frac{5 T R^2}{\Delta}})}  \right)}.
\end{align*}
First, if we exit \Cref{alg:restarted-sgd} at line 4, i.e. if
$T_{\mathrm{total}} < N$, then by the $L$-smoothness of $f$ we have
\begin{align*}
\sqn{\nabla f(y_0)} &\leq 2L (f(y_0) - f_{\ast}) \\
&\leq N \cdot \frac{2L (f(x_0) - f_{\ast})}{T_{\mathrm{total}}} \\
&\leq \log \left(  \frac{\max( \overline{L}, \sqrt{\frac{5 T \overline{R}^2}{2 \underline{\Delta}}})}{\max (L, \sqrt{\frac{5 T R^2}{\Delta}})}  \right) \cdot \frac{L (f(x_0) - f_{\ast})}{T_{\mathrm{total}}}.
\end{align*}
This fulfills the theorem's statement. From here on our, we assume that
$N \geq T_{\mathrm{total}}$. Observe that our choice of $N$ guarantees that
$N \geq N^{\ast}$. Let $\tau$ be the first $n$ (in the loop on line 2 of
Algorithm~\ref{alg:restarted-sgd}) such that
\begin{align*}
  \frac{\eta_{\ast}}{2} \leq \eta_{\tau} \leq \eta_{\ast}.
\end{align*}
Plugging $\eta = \eta_{\tau}$ into \Cref{eq:31} we get with probability at least $\delta$ that
\begin{align}
\frac{1}{T} \sum_{t=0}^{T-1} \sqn{\nabla f(x_t^{\tau})} &\leq \frac{2 (f(x_0) - f_{\ast})}{\eta_{\tau} T} + 5 \eta_{\tau} R^2 + \frac{12 R^2 \log \frac{1}{\delta}}{T} \nonumber\\
&\leq \frac{4 (f(x_0) - f_{\ast})}{\eta_{\ast} T} + 5 \eta_{\ast} R^2 + \frac{12 R^2 \log \frac{1}{\delta}}{T} \nonumber\\
&\leq 2 \left[ \frac{2 (f(x_0) - f_{\ast})}{\eta_{\ast} T} + 5 \eta_{\ast} R^2 \right] + \frac{12 R^2 \log \frac{1}{\delta}}{T}  \nonumber\\
\label{eq:32}
&\leq 13 \left[ \sqrt{\frac{L (f(x_0) - f_{\ast}) R^2}{T}} + \frac{(f(x_0) - f_{\ast}) L}{T}  \right]  + \frac{12 R^2 \log \frac{1}{\delta}}{T}.
\end{align}
We now apply Theorem~\ref{thm:find-leader-conv} with the parameters:
\begin{align*}
V &= \left\{ x_0^{\tau}, x_1^{\tau}, \ldots, x_{T-1}^{\tau} \right\}, \\
M &= \log \frac{1}{\delta}, \\
K &= T, \\
\gamma &= 13 \left[ \sqrt{\frac{L (f(x_0) - f_{\ast}) R^2}{T}} + \frac{(f(x_0) - f_{\ast}) L}{T}  \right]  + \frac{12 R^2 \log \frac{1}{\delta}}{T}.
\end{align*}
The theorem combined with \cref{eq:32} gives us that with probability at least $1-4\delta$
\begin{align}
\label{eq:36}
\sqn{\nabla f(y_{\tau})} \leq 13 \cdot e \cdot \left[ \sqrt{\frac{L (f(x_0) - f_{\ast}) R^2}{T}} + \frac{(f(x_0) - f_{\ast}) L}{T}  \right]  + \frac{12 R^2 \log \frac{1}{\delta}}{T} + c \cdot \frac{R^2 \log \frac{2d M}{\delta}}{T}.
\end{align}
By straightforward algebra
\begin{align*}
\sqn{\hat{g}_r} = \min_{n \in [N]} \sqn{\hat{g}_n} &\leq \min_{n \in [N]} \left[ \sqn{\hat{g}_n - \nabla f(y_n) + \nabla f(y_n)} \right] \\
&\leq \min_{n \in [N]} \left[ 2 \sqn{\hat{g}_n - \nabla f(y_n)} + 2 \sqn{\nabla f(y_n)} \right] \\
&\leq \min_{n \in [N]} \left[ 2 \max_{\alpha \in [N]} \sqn{\hat{g}_{\alpha} - \nabla f(s_{\alpha})} + 2 \sqn{\nabla f(y_n)} \right] \\
&= 2 \max_{n \in [N]} \sqn{\hat{g}_n - \nabla f(y_n)} + 2 \min_{n \in [N]} \sqn{\nabla f(y_n)}.
\end{align*}
Recall that we have $r = \arg\min_{n \in [N]} \sqn{\hat{g}_n}$, then as in the proof of \Cref{thm:find-leader-conv}
we have
\begin{align}
\sqn{\nabla f(y_r)} &\leq 2 \sqn{\nabla f(y_r) - \hat{g}_{y_r}} + 2 \sqn{\hat{g}_{y_r}} \nonumber\\
&\leq 2 \sqn{\nabla f(y_r) - \hat{g}_{y_r}} + 4 \max_{n \in [N]} \sqn{\hat{g}_n - \nabla f(y_n)} + 4 \min_{n \in [N]} \sqn{\nabla f(y_n)} \nonumber\\
\label{eq:35}
&\leq 6 \max_{n \in [N]} \sqn{\hat{g}_n - \nabla f(y_n)} + 4 \min_{n \in [N]} \sqn{\nabla f(y_n)}.
\end{align}
Observe that because that we passed the budget $K=T$ to the FindLeader
procedure, we can use \Cref{eq:34} and the union bound to that with probability $1-\delta$,
\begin{align}
\label{eq:37}
\max_{n \in [N]} \sqn{\hat{g}_n - \nabla f(y_n)} \leq c \cdot \frac{R^2 \log \frac{2dN}{\delta}}{T}.
\end{align}
And clearly
\begin{align}
\label{eq:38}
\min_{n \in [N]} \sqn{\nabla f(y_n)} \leq \sqn{\nabla f(y_{\tau})}.
\end{align}
Using the estimates of \cref{eq:36,eq:37,eq:38} to upper bound the right hand
side of \cref{eq:35} gives us that with probability at least $1-5\delta$
\begin{align*}
\sqn{\nabla f(y_r)} \leq 6 c \cdot \frac{R^2 \log \frac{2dN}{\delta}}{T} + 4 \left[ 13 \cdot e \cdot \left[ \sqrt{\frac{L (f(x_0) - f_{\ast}) R^2}{T}} + \frac{(f(x_0) - f_{\ast}) L}{T}  \right]  + \frac{12 R^2 \log \frac{1}{\delta}}{T} + c \cdot \frac{R^2 \log \frac{2d M}{\delta}}{T} \right].
\end{align*}
Combining the terms and substituting in the definition of $T_{\mathrm{total}}$ gives the theorem's statement.
\end{proof}

\end{document}